\newtheorem{theorem}{Theorem}[section]
\newtheorem{lemma}[theorem]{Lemma}
\newtheorem{proposition}[theorem]{Proposition}
\newtheorem{corollary}[theorem]{Corollary}
\newtheorem{definition}{Definition}[section]
\theoremstyle{remark}
\newtheorem{remark}[theorem]{Remark}
\theoremstyle{definition}
\numberwithin{equation}{section}
\newcommand{\R}{\ensuremath{\mathbb{R}}}
\newcommand{\veps}{\varepsilon}
\newcommand{\Div}{\mbox{div}}
\newcommand{\plap}{\ensuremath{\Delta_p}}
\newcommand{\Imvp}{\mathcal{I}_r^p}
\newcommand{\Mmvp}{\mathcal{M}_r^p}
\newcommand{\Mmvpb}{\mathcal{M}^p}
\newcommand{\U}{\tilde{U}}
\newcommand{\dd}{\,\mathrm{d}}
\newcommand{\e}{\varepsilon}
\DeclareMathOperator{\sgn}{\textup{sign}}
\newcommand{\lap}{\Delta}
\renewcommand{\vec}{}
\begin{document}

\title[A MVF for the $p$-Laplacian]{A mean value formula for the variational $p$-Laplacian}

\author[F.~del~Teso]{F\'elix del Teso}

\address[F. del Teso]{Departamento de An\'alisis Matem\'atico y Matem\'atica Aplicada,
Universidad Complutense de Madrid, 28040 Madrid, Spain} 

\email[]{fdelteso\@@{}ucm.es}

\urladdr{https://sites.google.com/view/felixdelteso}

\keywords{$p$-Laplacian, mean value property, viscosity solutions, dynamic programming principle.}

\author[E.~Lindgren]{Erik Lindgren}

\address[E. Lindgren]{Department of Mathematics, Uppsala University, Box 480, 751 06 Uppsala, Sweden}
\email[]{erik.lindgren\@@{}math.uu.se}
\urladdr{https://sites.google.com/view/eriklindgren}

\subjclass[2010]{
 35J60, 
 35J70, 
 35J75, 
 35J92, 
 35D40, 
 35B05, 
 49L20. 
 }

\begin{abstract}\noindent   We prove a new asymptotic mean value formula for the $p$-Laplace  operator, 
$$
\plap u=\Div(|\nabla u|^{p-2}\nabla u), \quad 1<p<\infty
$$
 valid in the viscosity sense. In the plane, and for a certain range of $p$, the mean value formula holds in the pointwise sense. We also study the existence, uniqueness  and convergence of the  related dynamic programming principle.
\end{abstract}

\maketitle

\tableofcontents 

\section{Introduction} It is well known that a function is harmonic if and only if it is satisfies
$$
\fint_{B_r} \left(u(x+y)-u(x)\right) dy = 0,
$$
for all $r$ small enough. This can be relaxed: a function is harmonic at a point $x$ if and only if
$$
\fint_{B_r} \left(u(x+y)-u(x)\right) dy = o(r^2),\quad \text{as } r\to 0.
$$
In this paper, we study a new\footnote{It is new for $1<p<2$. For $p\geq 2$, it has also been found in \cite{BS18}.} asymptotic mean value property for \emph{$p$-harmonic} functions, i.e., solutions of the equation
$$
\lap_p u=0.
$$
Here $p\in (1,\infty)$ and $\lap_p$ is the $p$-Laplace operator
\begin{equation}\label{eq:plapdef}
\lap_p u = \Div(|\nabla u|^{p-2}\nabla u),
\end{equation}
the first variation of the functional
$$
u\mapsto \int_\Omega |\nabla u|^p dx.
$$
Our result implies in particular that a function is \emph{$p$-harmonic} at a point $x$ if and only if it is satisfies
$$
\fint_{B_r} |u(x+y)-u(x)|^{p-2}(u(x+y)-u(x)) dy = o(r^p),\quad \text{as } r\to 0.
$$
The major strength and novelty of our mean value formula is that it recovers the variational $p$-Laplace operator \eqref{eq:plapdef} in contrast to the other known mean value formulas that recover the normalized $p$-Laplacian,
$$
\lap_p^N  u = \frac{1}{p} |\nabla u|^{ 2-p}\lap_p u.
$$
In particular, it allows us to deal with non-homogeneous problems of the form $-\plap u=f$ with $f\not=0$, which was not possible with previous approaches.

The drawback is that it cannot be written in the form $$u(x)=A_r[u](x)+ o(r^p)$$ for some monotone operator $A_r$. However, the mean value formula is still monotonically increasing in $u$ and monotonically decreasing in $u(x)$, which is decisive in the context of viscosity solutions.

\section{Main results}
Our main results concern the asymptotic behavior as $r \to 0$  of the quantities
\[
\Imvp[\phi](x)=\frac{1}{C_{d,p}r^p} \fint_{\partial B_r} |\phi(x+y)-\phi(x)|^{p-2} (\phi(x+y)-\phi(x)) \dd \sigma(y)
\]
and
\[
\Mmvp[\phi](x)=\frac{1}{D_{d,p} r^p} \fint_{ B_r} |\phi(x+y)-\phi(x)|^{p-2} (\phi(x+y)-\phi(x)) \dd y, 
\]
where
$$
C_{d,p}= \frac12 \fint_{\partial B_1} |y_1|^{p} \dd\sigma(y)  \textup{,} \quad D_{d,p}=\frac{dC_{d,p}}{p+d}
$$
and $d$ is the dimension\footnote{$C_{d,p}$ can be expressed in terms of the so-called $\beta$-functions. We thank \'Angel Arroyo and an anonymous referee for pointing this out.}.

Our first result, that will be proved in Section \ref{sec:cons}, provides the mean value formula for $C^2$ functions. It reads:
\begin{theorem}\label{thm:MVF}
Let $p\in(1,\infty)$, $x\in \R^d$  and $\phi\in C^2(B_R(x))$ for some $R>0$. If $p\in(1,2)$ assume also that $|\nabla\phi(x)|\not=0$. Then, we have
\begin{equation*}
\Imvp[\phi](x)=\lap_p \phi(x)+o_r(1) \quad \textup{and} \quad \Mmvp[\phi](x)=\lap_p \phi(x)+o_r(1),\end{equation*}
as $r\to 0$.
\end{theorem}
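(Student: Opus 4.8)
The plan is to reduce the statement, via a second order Taylor expansion and the homogeneity $\psi(rt)=r^{p-1}\psi(t)$ ($r>0$) of $\psi(t):=|t|^{p-2}t$, to one \emph{exact} identity for moments on the unit sphere and one \emph{asymptotic} estimate controlling the nonlinearity of $\psi$ near its singular point $t=0$ — which, together with the integrability of $\psi'(L)$ below, is where $|\nabla\phi(x)|\neq0$ is used when $1<p<2$. Note that $\psi$ is odd with $\psi'(t)=(p-1)|t|^{p-2}$. Subtracting the (irrelevant) constant $\phi(x)$, Taylor's theorem on $B_R(x)$ gives, for $|y|<R$,
$$\phi(x+y)-\phi(x)=L(y)+Q(y)+R(y),\qquad L(y):=\nabla\phi(x)\cdot y,\quad Q(y):=\tfrac12\langle D^2\phi(x)y,y\rangle,$$
with $|R(y)|\le\tfrac12\,\omega(|y|)\,|y|^2$ and $\omega(s):=\sup_{|w|\le s}\|D^2\phi(x+w)-D^2\phi(x)\|\to0$ as $s\to0$. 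Writing $y=rz$ and using, besides this homogeneity, the scale invariance of the spherical average, one gets
$$\Imvp[\phi](x)=\frac{1}{C_{d,p}\,r}\fint_{\partial B_1}\psi\big(L(z)+w_r(z)\big)\,\dd\sigma(z),\qquad w_r(z):=r\,Q(z)+\tfrac1rR(rz),$$
where $L,Q$ are evaluated at $z\in\partial B_1$; here $|w_r(z)|\le\Lambda r$ on $\partial B_1$ for $r$ small, with $\Lambda:=\tfrac12\|D^2\phi(x)\|+1$.

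Next, wherever $\psi'(L)$ is finite — which is a.e., and everywhere when $p\ge2$ — I would split
$$\fint_{\partial B_1}\psi(L+w_r)\,\dd\sigma=\fint_{\partial B_1}\psi(L)\,\dd\sigma+\fint_{\partial B_1}\psi'(L)\,w_r\,\dd\sigma+\fint_{\partial B_1}\big(\psi(L+w_r)-\psi(L)-\psi'(L)w_r\big)\,\dd\sigma.$$
The first integral vanishes by oddness of $\psi$ and the symmetry $z\mapsto-z$, which preserves $\sigma$ and reverses the sign of $L$. In the second, $\psi'(L)=(p-1)|L|^{p-2}\in L^1(\partial B_1)$: obvious for $p\ge2$, and for $1<p<2$ true because (using $|\nabla\phi(x)|\neq0$) $L$ vanishes only on a great subsphere of measure zero, near which $|L|^{p-2}$ is integrable since $p>1$. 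The contribution of $\tfrac1rR(rz)$ to the second integral is $\le(p-1)\tfrac r2\omega(r)\fint_{\partial B_1}|L|^{p-2}\dd\sigma=o(r)$, leaving $r\,(p-1)\fint_{\partial B_1}|L|^{p-2}Q\,\dd\sigma$, where one invokes the exact identity
$$(p-1)\fint_{\partial B_1}|\nabla\phi(x)\cdot z|^{p-2}\,\tfrac12\langle D^2\phi(x)z,z\rangle\,\dd\sigma(z)=C_{d,p}\,\lap_p\phi(x).$$
This is proved by rotating so that $\nabla\phi(x)=|\nabla\phi(x)|e_1$ and using $\fint_{\partial B_1}|z_1|^{p-2}z_iz_j\,\dd\sigma=0$ for $i\neq j$, $\fint_{\partial B_1}|z_1|^{p-2}z_1^2\,\dd\sigma=2C_{d,p}$ and $\fint_{\partial B_1}|z_1|^{p-2}z_i^2\,\dd\sigma=\tfrac{2C_{d,p}}{p-1}$ for $i\ge2$; the last of these is the moment identity $\fint_{\partial B_1}|z_1|^p\,\dd\sigma=(p-1)\fint_{\partial B_1}|z_1|^{p-2}z_2^2\,\dd\sigma$, a routine consequence of the beta-function formula for such integrals. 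Expanding $\langle D^2\phi(x)z,z\rangle=\sum_{i,j}\partial_{ij}\phi(x)z_iz_j$ and then writing $\lap_p\phi(x)=|\nabla\phi(x)|^{p-2}\lap\phi(x)+(p-2)|\nabla\phi(x)|^{p-4}\langle D^2\phi(x)\nabla\phi(x),\nabla\phi(x)\rangle$ gives the claim.

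The crux is to show $\int_{\partial B_1}\big|\psi(L+w_r)-\psi(L)-\psi'(L)w_r\big|\,\dd\sigma=o(r)$. For $p\ge2$ this is routine, since $s\mapsto|s|^{p-2}$ is locally H\"older continuous, so $\psi(L+w_r)-\psi(L)-\psi'(L)w_r=\int_0^1\big(\psi'(L+tw_r)-\psi'(L)\big)w_r\,\dd t$ is $o(1)\,|w_r|=o(r)$ uniformly in $z$, and no assumption on $\nabla\phi(x)$ is needed. For $1<p<2$ the singularity $\psi'(t)\sim|t|^{p-2}$, sitting near the equator $\{\nabla\phi(x)\cdot z=0\}$, is the real obstacle; the plan is to split $\partial B_1$ into $\{|L|>\delta\}$ and $\{|L|\le\delta\}$ with $\delta=r^\theta$, $\theta:=\tfrac{p+1}{2p}$, which for $p>1$ lies strictly between $\tfrac1p$ and $1$. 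On $\{|L|>\delta\}$, for $r$ small $|L+tw_r|\ge|L|/2$, whence $|\psi'(L+tw_r)-\psi'(L)|\le C|L|^{p-3}r$; since $\int_{\{|L|>\delta\}}|L|^{p-3}\,\dd\sigma\le C\delta^{p-2}$, this piece is $\le Cr^2\delta^{p-2}=o(r)$ because $\theta<\tfrac1{2-p}$. On $\{|L|\le\delta\}$ one uses $|\psi(L+w_r)|,|\psi(L)|\le(2\delta)^{p-1}$ and $|\psi'(L)w_r|\le(p-1)\Lambda r\,|L|^{p-2}$, together with $|\{|L|\le\delta\}|\le C\delta$ and $\int_{\{|L|\le\delta\}}|L|^{p-2}\,\dd\sigma\le C\delta^{p-1}$, to bound this piece by $C\delta^p+Cr\delta^{p-1}=o(r)$ because $\theta>\tfrac1p$. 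Altogether, $\Imvp[\phi](x)=\tfrac1{C_{d,p}r}\big(rC_{d,p}\lap_p\phi(x)+o(r)\big)=\lap_p\phi(x)+o_r(1)$. The heart of the matter is the tension between needing $\delta\gg r$ (so $L+w_r$ keeps the sign of $L$ off the equator, legitimizing the linearization of $\psi$) and $\delta^p\ll r$ (so the crude bound in the equatorial strip is negligible), reconciled only because $p>1$ leaves the interval $(\tfrac1p,1)$ available for $\theta$.

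Finally, for $\Mmvp$ I would rerun the argument in polar coordinates $z=\rho\xi$, $\rho\in(0,1)$, $\xi\in\partial B_1$: since $\psi\big(\phi(x+r\rho\xi)-\phi(x)\big)=(r\rho)^{p-1}\psi\big(\tfrac{\phi(x+r\rho\xi)-\phi(x)}{r\rho}\big)$, Fubini turns $\fint_{B_1}\psi(\phi(x+rz)-\phi(x))\,\dd z$ into $d\,r^{p-1}\int_0^1\rho^{p+d-2}\,\fint_{\partial B_1}\psi\big(\tfrac{\phi(x+r\rho\xi)-\phi(x)}{r\rho}\big)\,\dd\sigma(\xi)\,\dd\rho$, into which one inserts the spherical asymptotics $\fint_{\partial B_1}\psi\big(\tfrac{\phi(x+s\xi)-\phi(x)}{s}\big)\dd\sigma(\xi)=s\,C_{d,p}\lap_p\phi(x)+o(s)$ established above, with $s=r\rho$. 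The error is $o(s)$ with a bound of the form $s\,\varepsilon(s)$, $\varepsilon$ bounded and $\varepsilon(s)\to0$, which suffices to pass to the limit under the $\rho$-integral by dominated convergence. The radial factor $\int_0^1\rho^{p+d-1}\dd\rho=\tfrac1{p+d}$, together with $|\partial B_1|=d\,|B_1|$ and $D_{d,p}=\tfrac{dC_{d,p}}{p+d}$, then gives $\Mmvp[\phi](x)=\lap_p\phi(x)+o_r(1)$.
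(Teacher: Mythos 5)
Your argument is correct, and although the global skeleton matches the paper's (Taylor expansion, oddness killing the first-order term, explicit spherical moments producing $C_{d,p}\plap\phi(x)$, and a near/far splitting with respect to the equator $\{\nabla\phi(x)\cdot z=0\}$ when $1<p<2$), the execution in the singular range $p\in(1,2)$ is genuinely different. The paper's Theorem \ref{thm:consistency2} works with a strip $\{|\hat y\cdot e_1|\leq\gamma\}$ of \emph{fixed} width, takes the iterated limits $r\to0$ and then $\gamma\to0$, and leans on the uniform integrability estimate \eqref{eq:keyest} for the \emph{perturbed} kernel $|z\cdot e_1+\rho\,z^TD^2\phi\,z|^{-\alpha}$, i.e.\ on Lemma \ref{lem:stupid}, used several times together with H\"older's inequality and Lemma \ref{lem:pineq3}. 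You instead rescale to the unit sphere via the $(p-1)$-homogeneity of $J_p$, linearize $J_p$ around the unperturbed linear form $L(z)=\nabla\phi(x)\cdot z$ with a remainder controlled by $\psi''$, and use a \emph{shrinking} strip $\{|L|\leq r^\theta\}$ with the explicit exponent $\theta=\tfrac{p+1}{2p}\in(\tfrac1p,1)$; your only measure-theoretic inputs are the elementary one-variable bounds $\sigma(\{|L|\leq\delta\})\lesssim\delta$, $\int_{\{|L|\leq\delta\}}|L|^{p-2}\dd\sigma\lesssim\delta^{p-1}$ and $\int_{\{|L|>\delta\}}|L|^{p-3}\dd\sigma\lesssim\delta^{p-2}$ (all legitimate since $|\nabla\phi(x)|\neq0$), so Lemma \ref{lem:stupid}, whose whole point is the perturbation $\rho\neq0$, is never needed. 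This buys a more elementary and quantitative error bound (explicit powers of $r$ instead of a double limit, with the exponent arithmetic $\theta>\tfrac1p$ and $\theta<1<\tfrac1{2-p}$ checking out), and it unifies $p\geq2$ and $p<2$ under a single linearization, replacing the two-step use of Lemma \ref{lem:pineq1} by local H\"older continuity of $\psi'$ when $p\geq2$; what you give up is the reusable perturbed estimate that the paper's proof is organized around. Your moment identity $\fint_{\partial B_1}|z_1|^p\dd\sigma=(p-1)\fint_{\partial B_1}|z_1|^{p-2}z_2^2\dd\sigma$ is exactly the integration-by-parts fact the paper uses, and your polar-coordinates computation with $\int_0^1\rho^{p+d-1}\dd\rho=\tfrac1{p+d}$ and $D_{d,p}=\tfrac{dC_{d,p}}{p+d}$ supplies in detail the passage from spheres to balls that the paper compresses into Corollary \ref{cor:consistency}.
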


The  second  of our results relates the mean value property in the viscosity sense to the $p$-Laplace equation.

\begin{theorem}\label{thm:visceq} Let $\Omega\subset  \R^d$ be bounded and open, $p\in(1,\infty)$  and $f$ be a continuous function. Then $u$ is a viscosity solution of 
$$-\Imvp[u]= f+o_r(1),\quad \text{as } r\to 0, $$ in $\Omega$ if and only if it is a viscosity solution of 
$$
-\plap u =f
$$
in $\Omega$.
\end{theorem}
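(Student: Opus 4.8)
The plan is to prove the equivalence by the standard viscosity-solution technique of testing with smooth functions and invoking Theorem~\ref{thm:MVF} to translate between the two notions pointwise on the test function. Recall that a viscosity subsolution of $-\plap u = f$ is an upper semicontinuous $u$ such that whenever $\phi \in C^2$ touches $u$ from above at a point $x_0 \in \Omega$ (with $\nabla\phi(x_0)\neq 0$ in the singular range $1<p<2$, following the usual convention for the $p$-Laplacian), one has $-\plap\phi(x_0) \leq f(x_0)$; the definition of a viscosity subsolution of $-\Imvp[u] = f + o_r(1)$ should be the analogous statement with $-\plap\phi(x_0)$ replaced by $\limsup_{r\to 0}(-\Imvp[\phi](x_0))$, and symmetrically for supersolutions with $\liminf$. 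The key observation is that the two families of test inequalities coincide: if $\phi\in C^2(B_R(x_0))$ and (in the case $1<p<2$) $|\nabla\phi(x_0)|\neq 0$, Theorem~\ref{thm:MVF} gives $\Imvp[\phi](x_0) = \plap\phi(x_0) + o_r(1)$, so $\limsup_{r\to 0} \Imvp[\phi](x_0) = \liminf_{r\to 0}\Imvp[\phi](x_0) = \plap\phi(x_0)$.

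First I would fix the precise definition of viscosity solution for the mean-value equation $-\Imvp[u] = f + o_r(1)$, making sure it is phrased so that only the behavior of $\Imvp[\phi]$ at the contact point $x_0$ enters, and so that the singular case $1<p<2$ carries the same $\nabla\phi(x_0)\neq 0$ restriction on admissible test functions as the standard definition for the variational $p$-Laplacian. Then, for the ``only if'' direction, suppose $u$ is a viscosity subsolution of $-\Imvp[u] = f + o_r(1)$ and let $\phi$ touch $u$ from above at $x_0$ (with $\nabla\phi(x_0)\neq 0$ when $1<p<2$). Since $\phi \in C^2$ near $x_0$, Theorem~\ref{thm:MVF} applies and yields $-\plap\phi(x_0) = \limsup_{r\to 0}\left(-\Imvp[\phi](x_0)\right) \leq f(x_0)$, which is exactly the test inequality for $-\plap u = f$. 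The supersolution case is identical with $\liminf$ in place of $\limsup$, using the second half of Theorem~\ref{thm:MVF}. The ``if'' direction is obtained by running the same argument in reverse: a test inequality $-\plap\phi(x_0) \leq f(x_0)$ is turned, via Theorem~\ref{thm:MVF}, into $\limsup_{r\to 0}\left(-\Imvp[\phi](x_0)\right) \leq f(x_0)$.

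The one genuinely delicate point — and the step I expect to be the main obstacle — is the singular regime $1<p<2$ when a test function $\phi$ touches $u$ at a point $x_0$ where $\nabla\phi(x_0)=0$. Theorem~\ref{thm:MVF} is not available there (its hypothesis fails), and indeed $\Imvp[\phi](x_0)$ need not converge to anything resembling $\plap\phi(x_0)$. The resolution is the standard one used for the $p$-Laplacian: such test functions are simply excluded from the definition of viscosity solution (for both formulations), so there is nothing to check. I would state this convention explicitly and remark that it is the natural one — for instance, it is consistent with the reduced class of test functions for which the equation $-\plap u = f$ is classically well-posed in the viscosity framework when $1<p<2$. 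With that convention in place, the correspondence between test inequalities is exact and the proof reduces to a direct application of Theorem~\ref{thm:MVF} in both directions, so no further estimates are needed.
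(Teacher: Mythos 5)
Your overall strategy---translating test-function inequalities between the two formulations via Theorem \ref{thm:MVF}---is the same as the paper's, and for $p\geq 2$, or at contact points where the test function has non-vanishing gradient, your argument essentially is the paper's proof. The genuine gap is in how you treat the singular range $1<p<2$ at contact points with $\nabla\varphi(x_0)=0$. You propose to resolve it by convention: exclude such test functions from both definitions, ``so there is nothing to check.'' That is not the definition the theorem refers to, and for the inhomogeneous equation it produces an inadequate notion of solution. If $u$ is constant (or, more generally, if $u$ is touched at $x_0$ only by test functions having a critical point there), then under your convention no admissible test function exists at $x_0$, so $u$ is vacuously a viscosity solution of $-\plap u=f$ for every continuous $f$---in particular for $f$ strictly negative near $x_0$---which destroys the equivalence with weak solutions that the theorem is meant to preserve. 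Discarding critical test functions can be justified for the homogeneous equation, but for general $f$ it strictly weakens the notion; it is not a harmless convention, and it is exactly where the difficulty of the case $1<p<2$ lives.

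The paper's Definition \ref{pvisc} (taken from \cite{equivalence2}) handles this differently: test functions $\varphi$ need only satisfy $\nabla\varphi(x)\neq 0$ on a punctured ball $B_R(x_0)\setminus\{x_0\}$ (so $\nabla\varphi(x_0)=0$ is allowed), and the test inequality is $\lim_{\rho\to 0}\sup_{B_\rho(x_0)\setminus\{x_0\}}\left(-\plap\varphi(x)\right)\geq f(x_0)$ for supersolutions (resp.\ the $\inf$ version $\leq f(x_0)$ for subsolutions), i.e.\ the operator is evaluated at nearby points, never at the possibly critical contact point; the mean value property in the viscosity sense is defined with the same device applied to $\Imvp$. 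With these definitions the proof runs as follows: given $\e>0$, pick $x\in B_\rho(x_0)\setminus\{x_0\}$ realizing the sup/inf condition up to $\e$; at that point $\nabla\varphi(x)\neq 0$, so Theorem \ref{thm:MVF} applies there and converts $-\plap\varphi(x)$ into $-\Imvp[\varphi](x)+o_r(1)$ (and conversely), and one concludes by letting $\e\to 0$. So to repair your argument you must either work with this punctured-ball formulation, or supply a separate proof that your restricted definition is equivalent to it for nonzero $f$ (which, by the constant-function example, it is not in general). As written, your proposal proves an equivalence between two modified notions rather than the statement of the theorem.
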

 We refer to Section \ref{sec:visc} for the proof of the above result, and to Section \ref{sec:not} for the definition of viscosity solutions.

We wish to point out that for $p\geq 2$, the above results have been proved independently in \cite{BS18}, see Proposition 2.10 and Theorem 2.12 therein.

Our  third  result states that  in the plane, and for a certain range of $p$,  functions that satisfy the (homogeneous) mean value property in a pointwise sense are the same as the $p$-harmonic functions. Let $p_0$ be the root of
$$
\frac{2\left(-p+ \sqrt{(36(p-1)+(p-2)^2}\right)}{\big(-p+\sqrt{16(p-1)+(p-2)^2}\big)^2}=\frac{1}{p-1}
$$
that lies in the interval $(1,2)$. We have $p_0\approx  1.117$. 
\begin{theorem}\label{thm:pointwise} Let $\Omega \subset  \R^2$ be bounded and open, and $p\in(p_0,\infty)$. Then a continuous function $u$ satisfies 
$$-\Imvp[u]=o_r(1), \quad \text{as } r\to 0, $$  in the pointwise sense in $\Omega$ if and only if it is  a  viscosity  solution of 
$$
-\plap u =0
$$
in $\Omega$.
\end{theorem}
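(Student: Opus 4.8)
The statement is an ``if and only if'', and I would prove the two implications by rather different means: the forward one (pointwise $\Rightarrow$ $p$-harmonic) is soft, while the reverse one ($p$-harmonic $\Rightarrow$ pointwise) is where the planar hypothesis and the threshold $p_0$ are actually used.

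For the forward implication no new idea is needed beyond Theorems \ref{thm:MVF} and \ref{thm:visceq}. Assume $u$ is continuous and $-\Imvp[u]=o_r(1)$ holds pointwise in $\Omega$, and let $\varphi\in C^2$ touch $u$ from above at $x_0\in\Omega$ (when $p\in(1,2)$, as in the definition of viscosity solution, I only need to treat test functions with $\nabla\varphi(x_0)\neq0$). Since $\varphi(x_0+y)-\varphi(x_0)\geq u(x_0+y)-u(x_0)$ for $|y|$ small and $t\mapsto|t|^{p-2}t$ is nondecreasing, we get $\Imvp[u](x_0)\leq\Imvp[\varphi](x_0)$ for all small $r$. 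Letting $r\to0$, the left-hand side tends to $0$ by hypothesis and the right-hand side to $\plap\varphi(x_0)$ by Theorem \ref{thm:MVF}, so $-\plap\varphi(x_0)\leq0$; testing from below gives the opposite inequality. Hence $u$ is a viscosity solution of $-\plap u=0$ (equivalently, a viscosity solution of $-\Imvp[u]=o_r(1)$, and one concludes by Theorem \ref{thm:visceq} with $f\equiv0$). This argument uses neither $d=2$ nor $p>p_0$.

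For the reverse implication, let $u$ be a viscosity solution of $-\plap u=0$ in $\Omega\subset\R^2$; by the equivalence of viscosity and weak solutions for the $p$-Laplacian, $u$ is weakly $p$-harmonic, hence $u\in C^{1,\alpha}_{\mathrm{loc}}(\Omega)$, and by the planar regularity theory it is $C^\infty$ (real-analytic) away from its critical set $\{\nabla u=0\}$, whose points are isolated unless $u$ is locally constant. Fix $x_0\in\Omega$; I must show $\Imvp[u](x_0)\to0$. If $u$ is constant near $x_0$ this is trivial; if $\nabla u(x_0)\neq0$, then $u\in C^2$ with nonvanishing gradient near $x_0$ and Theorem \ref{thm:MVF} gives $\Imvp[u](x_0)=\plap u(x_0)+o_r(1)=o_r(1)$. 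The remaining case, $x_0$ an isolated critical point of a nonconstant $u$, is the crux, and the only place where $d=2$ and $p>p_0$ enter. Here I would invoke the two-dimensional structure theory (quasiregularity of the complex gradient, Bojarski--Iwaniec and Iwaniec--Manfredi; Aronsson's homogeneous solutions and the classification of $p$-harmonic functions $r^{\mu}\psi(\theta)$): there is an asymptotic expansion
\[
u(x)-u(x_0)=r^{\mu}g(\theta)+E(r,\theta),\qquad r=|x-x_0|,\ \ \theta=\arg(x-x_0),
\]
where $r^{\mu}g(\theta)$ is a homogeneous $p$-harmonic function, $\mu=\mu(p)$ is one of the admissible homogeneity exponents $\mu_k(p)=\tfrac{(p-2)+\sqrt{4k^2(p-1)+(p-2)^2}}{2(p-1)}$ (so $\mu_1=1$ and, at a critical point, $\mu\geq\mu_2$), $g$ has $2k$ nodal rays and obeys the sign-reversal symmetry $g(\theta+\pi/k)=-g(\theta)$, and $E$ is a remainder of order $o(r^{\mu})$. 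Inserting this into $\Imvp[u](x_0)$ and rescaling, the leading contribution is $\tfrac{1}{C_{2,p}}\,r^{\mu(p-1)-p}\fint_{\partial B_1}|g(\theta)|^{p-2}g(\theta)\dd\sigma(\theta)$, and the \emph{angular integral vanishes} by the symmetry of $g$. When $p\geq2$ one is then essentially done: for $p>2$ even the crude bound $|\Imvp[u](x_0)|\lesssim r^{\mu(p-1)-p}$ suffices since $\mu(p-1)>p$ (equivalently $p>2$), and for $p=2$ the exact mean value property of harmonic functions gives $\Imvp[u](x_0)=0$ outright. For $p_0<p<2$ the crude bound is useless because $\mu(p-1)<p$, and one must estimate the remainder carefully: expanding $|r^{\mu}g+E|^{p-2}(r^{\mu}g+E)$ (using that $|g|^{p-2}$ is integrable, $p-2>-1$, so the nodal rays of $g$ cause no harm) and quantifying the rate of $r^{-\mu}E\to0$ via the planar theory, one obtains $|\Imvp[u](x_0)|\lesssim r^{\gamma(p)-p}$ for an explicit exponent $\gamma(p)$, and the requirement $\gamma(p)>p$ turns out to be exactly the algebraic inequality displayed before the statement, i.e. $p>p_0$ (equivalently, $\mu_3(p)-1>(\mu_2(p)-1)^2$). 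This gives $\Imvp[u](x_0)\to0$ and finishes the proof.

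I expect the genuine obstacle to be precisely this last point: producing an expansion of $u$ near an isolated critical point that is \emph{quantitative enough} to control the oscillatory integral $\fint_{\partial B_r}|u-u(x_0)|^{p-2}(u-u(x_0))\dd\sigma$ beyond its vanishing leading term, and tracking the exponent $\gamma(p)$ so that the threshold $p_0$ emerges exactly. This forces one into the fine planar regularity theory and the explicit spectrum of homogeneous $p$-harmonic functions, and it is also the reason for restricting to the plane: in higher dimensions the critical set need not be negligible and no such expansion is available.
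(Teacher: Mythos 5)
Your proposal is correct and follows essentially the same route as the paper: the forward implication is the soft consequence of Theorems \ref{thm:MVF} and \ref{thm:visceq}, and for the reverse implication the paper likewise treats regular points by smoothness, the case $p> 2$ by a crude $C^{1,\alpha}$ bound with $\alpha>1/(p-1)$, and isolated planar critical points (after disposing of high-order zeros $n\geq3$ by a Hessian decay bound) via the quantitative expansion of \cite{LM16}, \cite{AL16}, whose quasi-radial leading profile has vanishing $J_p$-average and whose remainder exponent yields exactly the condition $\gamma>p/(p-1)$, i.e.\ your $\mu_3-1>(\mu_2-1)^2$, hence $p>p_0$. The only cosmetic difference is that the paper verifies the vanishing of the leading term's $J_p$-mean by a hodograph change of variables over balls rather than by your sign-reversal symmetry on circles.
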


We refer to Section \ref{sec:plane} for the proof of this theorem and to page \pageref{lim} for a heuristic explanation on the technical limitation $p>p_0$. 

\begin{remark}
Theorem \ref{thm:visceq} and Theorem \ref{thm:pointwise} remain true if one replaces $\Imvp$ by $\Mmvp$.
\end{remark}

The fourth and the last of our main results concerns the associated dynamic programming principle. Consider the following boundary value problem
\begin{equation}\label{eq:dpp}
\begin{cases}
-\Mmvp[U_r] (x)=f(x),& x\in \Omega\\
U_r(x)=G(x), & x\in \partial \Omega_r:=\{x\in \Omega^c \ :\ \textup{dist}(x,\Omega)\leq r \}, 
\end{cases}
\end{equation}
where $G$ is a continuous extension of $g$ from $\partial \Omega$ to $\partial \Omega_r$.
\begin{theorem}
\label{thm:dpp}
Let $\Omega\subset \R^d$ be a bounded, open and $C^2$ domain,  $p\in(1,\infty)$, $f$ be a continuous function in $\overline{\Omega}$ and $g$ a continuous function on $\partial \Omega$. Then
\begin{enumerate}[\noindent i)]
\item there is a unique classical solution $U_r$ of \eqref{eq:dpp},
\item  $U_r\to u$ as $r\to 0$ uniformly in $\overline{\Omega}$, where $u$ is the viscosity solution of 
\begin{equation}\label{eq:BVP}
\begin{cases}
-\lap_p u(x) =f(x), & x\in \Omega\\
u(x)=g(x), & x\in \partial \Omega.
\end{cases}
\end{equation}
\end{enumerate}
\end{theorem}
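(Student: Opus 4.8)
The plan is to treat the two parts separately, using the mean value formula (Theorem~\ref{thm:MVF}) as the bridge between the discrete functional equation and the $p$-Laplace PDE.

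\textbf{Part i) — existence and uniqueness of $U_r$.} First I would reformulate \eqref{eq:dpp} as a fixed point problem. Writing out $\Mmvp[U_r](x)$ and isolating the "diagonal" contribution, note that the map $t\mapsto \fint_{B_r}|U(x+y)-t|^{p-2}(U(x+y)-t)\dd y$ is continuous and strictly decreasing from $+\infty$ to $-\infty$ in $t$, so for any given values of $U$ on $x+B_r$ there is a unique $t=\mathcal{T}_r[U](x)$ solving $-\Mmvp$ evaluated with center value $t$ equal to $f(x)$; this defines an operator $\mathcal{T}_r$ on $C(\overline{\Omega}\cup\partial\Omega_r)$ (with the boundary data $G$ frozen). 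The operator $\mathcal{T}_r$ is monotone (increasing in $U$), and a careful look shows it is a contraction, or at least that Perron's method / a monotone iteration between ordered sub- and supersolutions converges: constants shifted far up and down serve as a super- and subsolution because $f$ and $G$ are bounded. I would then invoke the comparison principle for $-\Mmvp$ — which follows from strict monotonicity in the center value together with the averaging structure, by a standard argument evaluating at a maximum of $U_r^1-U_r^2$ — to get uniqueness, and use it to upgrade the iteration's limit to the unique classical (i.e. continuous, pointwise) solution. Regularity of $U_r$ as a continuous function comes from continuity of $f$, $G$ and dominated convergence in the integral defining $\Mmvp$.

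\textbf{Part ii) — convergence as $r\to0$.} The natural route is the Barles–Souganidis half-relaxed limits method. First establish a uniform-in-$r$ bound $\|U_r\|_{L^\infty(\overline{\Omega})}\le C$: this follows from comparison against explicit barriers, e.g. large multiples of the $p$-harmonic or $p$-superharmonic functions built from the $C^2$ domain assumption (a function like $x\mapsto G(x_0)+K(\text{dist to }\partial\Omega)^\alpha$ or a paraboloid, for which Theorem~\ref{thm:MVF} gives $-\Mmvp\ge f$ for $r$ small), using that near $\partial\Omega$ the $C^2$ regularity of $\partial\Omega$ lets one place an interior/exterior barrier touching each boundary point — this also yields the boundary estimate $U_r(x)\to g(x)$ uniformly as $x\to\partial\Omega$, uniformly in $r$. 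Then define $\overline{u}=\limsup^*U_r$ and $\underline{u}=\liminf_*U_r$; by Theorem~\ref{thm:visceq} (consistency: the scheme $-\Mmvp$ is a consistent, monotone approximation of $-\plap$), $\overline{u}$ is a viscosity subsolution and $\underline{u}$ a viscosity supersolution of \eqref{eq:BVP}, with $\overline{u}\le g\le\underline{u}$ on $\partial\Omega$ thanks to the barrier bound. The comparison principle for the $p$-Laplacian (valid for bounded domains and continuous $f$) then forces $\overline{u}\le\underline{u}$, hence $\overline{u}=\underline{u}=u$ is the unique viscosity solution and the convergence is locally uniform, upgraded to uniform on $\overline{\Omega}$ by the uniform boundary behavior.

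\textbf{Main obstacle.} The delicate point is the consistency/stability interface in the degenerate regime $1<p<2$ at points where $\nabla u$ vanishes: Theorem~\ref{thm:MVF} requires $|\nabla\phi(x)|\neq0$ there, so when a test function touches $\overline{u}$ or $\underline{u}$ at a critical point one must argue as in the viscosity theory for the (variational) $p$-Laplacian — either the test function can be taken with nonvanishing gradient, or the inequality is vacuous/handled by the definition of viscosity solution used in Section~\ref{sec:not}. Making the half-relaxed limit argument compatible with exactly that definition (and checking the scheme really is monotone as a relation, given that it is \emph{not} of the form $u=A_r[u]+o(r^p)$, as the introduction warns) is where the real care is needed; the uniform barrier construction near a $C^2$ boundary is standard but must be done so that the same barriers work for all small $r$ simultaneously.
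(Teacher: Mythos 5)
Your overall architecture (solve for the centre value to get a fixed-point/monotone iteration, comparison for uniqueness, Barles--Souganidis half-relaxed limits for convergence) is the same as the paper's, but two of your key steps do not work as stated. In part i), constants cannot serve as super- and subsolutions: the operator $\Mmvp$ only sees differences, so $\Mmvp[c]=0$ for every constant $c$, and $-\Mmvp[c]\geq f$ fails wherever $f>0$ no matter how large $c$ is. The paper instead constructs the barrier $\psi(x)=C-D\,c_{d,p}|x-z|^{\frac{p}{p-1}}$ with the centre $z$ placed so that $B_1(z)\cap\Omega_R=\emptyset$; then $\psi$ is smooth with nonvanishing gradient on $\Omega_R$ (essential for consistency when $p<2$), $-\plap\psi=D$, and Corollary \ref{cor:consistency} turns this into $-\Mmvpb[\psi]\geq\|f\|_\infty$ for all $r\leq R$ with a uniform error (Proposition \ref{Prop:Stability}); this same barrier is what starts and caps the monotone iteration in Proposition \ref{prop:existenceDPPf0} (no contraction is proved or needed). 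Your comparison sketch also glosses a real step: at a maximum of the difference one does not get an immediate contradiction but only equality on $B_r(x_0)$, which must then be propagated ball by ball to the boundary strip (Proposition \ref{prop:comparison}).

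In part ii), your route relies on boundary barriers for the scheme giving $U_r\to g$ uniformly near $\partial\Omega$, uniformly in $r$; this is asserted, not constructed, and it is precisely the hard point: the error in Theorem \ref{thm:MVF} is pointwise, and candidates like $G(x_0)+K\,\mathrm{dist}(x,\partial\Omega)^\alpha$ have second derivatives blowing up (or degenerating gradients) near the boundary, so one would need consistency estimates uniform both in $r$ and in the boundary point. The paper deliberately avoids this by working with generalized (Barles--Souganidis) boundary conditions, Definition \ref{def:genvisc}, and proving a strong uniqueness property (Theorem \ref{thm:SUP}) through the equivalence of generalized and standard viscosity solutions on $C^2$ domains (Proposition \ref{prop:equivnotions}), with the explicit boundary test functions of \cite{dTMP18}. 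Likewise, the vanishing-gradient issue you flag as the ``main obstacle'' cannot be left as an either/or: under the paper's definition of viscosity solution the half-relaxed limit argument needs the case analysis of the convergence proof, namely the locally constant case handled with the test function $|x-x_0|^{\frac{p}{p-1}+1}$ and Lemma \ref{lem:xbeta}, and the non-constant case handled by the translation argument of \cite{AR18}. Without the barrier of Proposition \ref{Prop:Stability} and without one of these two boundary/critical-point mechanisms, the proposed proof does not close.
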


\begin{remark}
We have stated all our results in the context of viscosity solutions. Since weak and viscosity solutions are equivalent (cf. \cite{equiv} and \cite{equivalence2}), the same results hold true for weak solutions.
\end{remark}

\section{Related results}
Recently, there has been a surge of interest around mean value properties of equations involving the $p$-Laplacian. In \cite{MRP}, it is proved that a function is $p$-harmonic if and only if 
\[
u(x)=A_r[u](x)+ o(r^2),
\]
as $r\to 0$. Here $A_r$ is the monotone operator
\[
A_r[u](x)=\frac{p-2}{2(p+d)}\left(\max_{B_r(x)}u + \min_{B_r(x)}u \right)+\frac{2+d}{p+d}\left(\fint_{B_r(x)} u(y) dy\right).
\]
This was first proved to be valid in the viscosity sense. In \cite{LM16}, this was proved to hold in the pointwise sense, in the plane and for $1<p<\hat{p}\approx 9.52$. Shortly after, this was extended to all $p\in (1,\infty)$, in \cite{AL16}.  Linked to a mean value formula, there is a corresponding dynamic programming principle (DPP), which is the solution $U_r$ of the problem $U_r=A_r[U_r]$ subject to the corresponding boundary conditions. The typical result is to show that $U_r\to u$ where $u$ is a viscosity solution of the boundary value problem associated to the $p$-Laplacian.

The above mentioned results are based on the following identity for the so-called normalized $p$-Laplacian
\begin{equation}\label{eq:plapinterp}
 \lap_p^N u := \frac{1}{p}\lap u + \frac{p-2}{p}|\nabla u|^{-2}\lap_\infty u 
\end{equation}
and the  now well-known mean value formulas for  the Laplacian and $\infty$-Laplacian. More precisely, for a smooth function $\phi$, 
\[
\frac{A_r[\phi](x)-\phi(x)}{C_{d,p}r^2} - \lap_p^N\phi(x)= o_r(1),
\]
for some constant $C_{d,p}>0$. In the last years, several other mean value formulas for the normalized $p$-Laplacian have been found, and the corresponding program (equivalence of solutions in the viscosity and classical sense and study of the associated dynamic programming principle) has been developed. See for instance \cite{AHP17},  \cite{dTMP18}, \cite{HAR16},  \cite{KMP12}, \cite{Le20},  \cite{LeMaRi19}, \cite{LeMa17},  and \cite{ObermanpLap}.

We also want to mention  \cite{GS12} and \cite{IMW17}, where two other nonlinear mean value formulas are studied, with some similarities with ours. 

It is noteworthy to mention that our results are also related to asymptotic mean value formulas for nonlocal operators involving for instance fractional or non-local versions of the $p$-Laplacian. See \cite{AMRT10} and \cite{BS18}.  In particular, in \cite{BS18}, a mean value formula and equivalence of viscosity solutions have been obtained in the case $p\geq 2$.

\section{Comments on our results}
\subsection*{Comments on Theorem \ref{thm:pointwise}} \label{lim}
The curious reader might wonder why we are not able to prove the pointwise validity for the mean value formula for the full range $p\in (1,\infty)$, as in \cite{AL16}. To make a long story short this has to do with the fact that the mean value formula considered in \cite{AL16} has quadratic scaling. It is therefore enough with an error term of order strictly larger than ${2}$. The mean value formula in the present paper however, has scaling $p/(p-1)$, which makes it necessary with an error term of order strictly larger than $p/(p-1)$. When $p<2$, this certainly comes with some difficulties that for the moment forces us to assume the larger lower bound $p>p_0$. However, we still believe that  such a result holds  in the full range $p\in(1,\infty)$.

\subsection*{Comments on the definition of viscosity solution and the proof Theorem \ref{thm:dpp}}

The operator $\lap_p\phi(x)$ is singular in the range $p\in(1,2)$ when $\nabla\phi(x)=0$. This fact forces us to choose a modified version of viscosity solution (see Definition \ref{pvisc}. As expected, when $p\geq2$ or $\nabla\phi(x)\not=0$, this definition is equivalent to the usual one (cf. \cite{AR18}). 

This definition of viscosity solution adds some extra technicalities in the proofs of this manuscript. In particular in the proof of convergence of Theorem \ref{thm:dpp}. Here we follow the classical program developed in \cite{BS91} and adapted to the context of \em homogeneous \em problems involving the $p$-Laplacian.

\subsection*{Comments on the limit $p\to 1$} Formally, when $p=1$ the mean value formula becomes
$$
\fint_{B_r} \sgn(u(x+y)-u(x)) dy = o(r),\quad \text{as } r\to 0,
$$
or
$$
\frac{1}{r}\left(\frac{|\{y\in B_r \,: \, u(x+y)>u(x)\} |}{|B_r|}-\frac{|\{y\in B_r \,: \, u(x+y)<u(x)\} |}{|B_r|}\right) = o_r(1),
$$
which could relate to $1$-harmonic functions. We plan to study this possibility in the future.

 \subsection*{More general datum} It would also be interesting to study problems where $f=f(x,u,\nabla u(x))$ has the right monotonicity assumptions as described in \cite{MO}. Theorem \ref{thm:visceq} follows in a straightforward way. However, the convergence of dynamic programming principles like in Theorem \ref{thm:dpp} would require a more delicate study, both in terms of existence and properties of the $r$-scheme, and the study of convergence based on the Barles-Souganidis approach. 

\subsection*{Plan of the paper}
The plan of the paper is as follows. In Section \ref{sec:not}, we introduce some notation
and the notions of viscosity solutions.  This is followed by Section \ref{sec:cons}, where we prove the mean value formula for $C^2$ functions. This result is then used in Section \ref{sec:visc}, where we prove the mean value formula for viscosity solutions. In Section \ref{sec:plane}, we prove that in dimension $d=2$, and for a certain range of $p$, functions that satisfy the (homogeneous) mean value property in a pointwise sense are the same as the $p$-harmonic functions. In Section \ref{sec:dpp}, we study existence, uniqueness and convergence for the dynamical programming principle. Finally, in the Appendix, we prove and state some auxiliary inequalities.

\section{Notation and prerequisites}\label{sec:not}Throughout this paper, $d$ will denote the dimension and we will for $p\in (1,\infty)$ use the notation
$$
J_p(t)=|t|^{p-2}t.
$$
We now define viscosity solutions of the related equations and mean value properties. We adopt the definition of solutions from \cite{equivalence2}. 
 \begin{definition} [Viscosity solutions of the equation] \label{pvisc} Suppose that $f$
 is continuous function in $\Omega$. We say that a lower (resp. upper) semicontinuous function $u$ in $\Omega$ is a \emph{viscosity supersolution} (resp. \emph{subsolution})  of the
  equation $$-\plap u=f $$  in $\Omega$ if the
  following holds: whenever $x_0 \in \Omega$ and $\varphi \in
  C^2(B_R(x_0))$ for some $R>0$ are such that $|\nabla \varphi(x)|\neq 0$ for $x\in B_R(x_0)\setminus\{x_0\}$, 
$$ \varphi(x_0) = u(x_0) \quad \text{and} \quad \varphi(x) \leq
u(x) \quad  \text{(resp. $\varphi(x)\geq u(x)$)} \quad \text{for all} \quad
x \in  B_R(x_0)\cap \Omega,$$
then we have
\begin{equation}\label{eq:testfuncdef}\lim_{\rho\to 0}\sup_{B_\rho(x_0)\setminus\{x_0\}}\left(-\plap \varphi (x)\right)\geq f(x_0) \quad   \text{(resp. $\lim_{\rho\to 0}\inf_{B_\rho(x_0)\setminus\{x_0\}}\left(-\plap \varphi (x)\right)\leq f(x_0)$)}.
\end{equation}
 A \emph{viscosity solution} is a continuous function being
both a viscosity supersolution and a viscosity subsolution.
\end{definition}
 
\begin{remark}\label{rem:equivviscform}
We consider condition \eqref{eq:testfuncdef} to avoid problems with the definition of $-\Delta_p \phi(x_0)$ when $\nabla \varphi(x_0)=0$ and $p\in(1,2)$. However, when either $p\geq2$ or $\nabla \varphi(x_0)\not=0$, \eqref{eq:testfuncdef} can be replaced by the standard one, i.e.,
\begin{equation}\label{eq:testfuncdef2}
-\plap \varphi (x_0)\geq f(x_0) \quad   \text{(resp. $-\plap \varphi (x_0)\leq f(x_0)$)}.
\end{equation}
\end{remark}

\begin{definition} [The mean value property in the viscosity sense]  Suppose that $f$
 is continuous function in $\Omega$. We say that a lower (resp. upper) semicontinuous function $u$ in ${\Omega}$ is a \emph{viscosity supersolution} (resp. \emph{subsolution}) of the
  equation $$-\Imvp[u]=f+o_r(1) $$  in $\Omega$ if the
  following holds: whenever $x_0 \in \Omega$ and $\varphi \in
  C^2(B_R(x_0))$ for some $R>0$ are such that $|\nabla \varphi(x)|\neq 0$ for $x\in B_R(x_0)\setminus\{x_0\}$, 
$$ \varphi(x_0) = u(x_0) \quad \text{and} \quad \varphi(x) \leq
u(x) \quad  \text{(resp. $\varphi(x)\geq u(x)$)} \quad \text{for all} \quad
x \in  B_R(x_0)\cap \Omega,$$
then we have 
$$ \lim_{\rho\to 0}\sup_{B_\rho(x_0)\setminus\{x_0\}}\left(-\Imvp[\varphi](x)\right)\geq f(x_0) +o_r(1) \quad  \text{(resp. $ \lim_{\rho\to 0}\inf_{B_\rho(x_0)\setminus\{x_0\}}\left(-\Imvp[\varphi](x)\right)\leq f(x_0) +o_r(1) $)}.$$
A \emph{viscosity solution} is a continuous function being both a viscosity supersolution and a viscosity subsolution.
\end{definition}

\begin{remark}
The above definition can also be considered with $\Mmvp$ instead of $\Imvp$.
\end{remark}

Finally, we define the concept of viscosity solution for the the boundary value problem \eqref{eq:BVP}.
\begin{definition}[Viscosity solutions of the boundary value problem] 
Suppose that $f$ is continuous function in $\overline{\Omega}$, and that $g$ is a continuous function in $\partial \Omega$. We say that a lower (resp. upper) semicontinuous function $u$  in $\overline{\Omega}$  is a \emph{viscosity supersolution} (resp. \emph{subsolution}) of \eqref{eq:BVP} if
\begin{enumerate}[(a)]
\item $u$ is a viscosity supersolution (resp. subsolution) of $-\Delta_p u=f$ in $\Omega$ (as in Definition \ref{pvisc});
\item $u(x)\geq g(x)$ (resp. $u(x)\leq g(x)$) for $x\in \partial \Omega.$
\end{enumerate}
A \emph{viscosity solution} of \eqref{eq:BVP} is a continuous function in $\overline{\Omega}$ being both a viscosity supersolution and a viscosity subsolution.
\end{definition}

\section{The mean value formula for $C^2$-functions}\label{sec:cons}
In this section we prove the mean value formulas for $C^2$-functions  as presented in Theorem \ref{thm:MVF}. The proof is split into two different cases: $p>2$ and $p<2$. The case $p=2$ is well known so we leave that out.  We restate the results for convenience.
\begin{theorem}\label{thm:consistency}
Let $p\in(2,\infty)$ and $\phi\in C^2(B_R(x))$ for some $R>0$. Then
\[
\frac{1}{C_{d,p}r^p} \fint_{\partial B_r} |\phi(x+y)-\phi(x)|^{p-2} (\phi(x+y)-\phi(x)) \dd \sigma(y)=\plap \phi(x)+ o_r(1), 
\]
where $ C_{d,p}=\frac12 \fint_{\partial B_1} |y_1|^{p} \dd\sigma(y)$.
\end{theorem}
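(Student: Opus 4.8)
The plan is to Taylor expand $\phi$ around the point $x$ and integrate term by term over the sphere $\partial B_r$, keeping careful track of which terms survive the symmetrization and which are of lower order. Write $\delta(y) = \phi(x+y) - \phi(x)$. Since $\phi \in C^2$, for $y \in \partial B_r$ we have $\delta(y) = \nabla\phi(x)\cdot y + \frac12 y^\top D^2\phi(x) y + o(r^2)$. The key structural observation, valid since $p>2$, is that $J_p(t) = |t|^{p-2}t$ is $C^1$ with $J_p'(t) = (p-1)|t|^{p-2}$, so we may expand $J_p(\delta(y))$ around the leading term $a(y) := \nabla\phi(x)\cdot y$. There are two regimes to separate: the case $\nabla\phi(x) \neq 0$ and the degenerate case $\nabla\phi(x) = 0$, which for $p>2$ is harmless (the integrand is simply $o(r^p)$ after one checks $|\delta(y)|^{p-2}\delta(y) = O(r^{2(p-1)})$ and $2(p-1) > p$).

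For the main case $\nabla\phi(x) \neq 0$, first I would reduce by rotation and scaling: set $g = \nabla\phi(x)$, write $y = r\omega$ with $\omega \in \partial B_1$, so $a(y) = r\, g\cdot\omega$ and the quadratic term is $\frac{r^2}{2}\omega^\top D^2\phi(x)\omega$. Then
\begin{equation*}
J_p(\delta) = J_p(a) + J_p'(a)\Big(\tfrac{r^2}{2}\omega^\top D^2\phi(x)\,\omega\Big) + \text{error},
\end{equation*}
where the error is $o(r^p)$ uniformly on $\partial B_1$ once one controls it via the modulus of continuity of $J_p'$ (here $p > 2$ is exactly what makes $J_p'$ continuous and the remainder manageable). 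Now integrate over $\partial B_1$. The term $\fint_{\partial B_1} J_p(r\, g\cdot\omega)\dd\sigma(\omega) = r^{p-1}|g|^{p-1}\fint_{\partial B_1} J_p\!\big(\tfrac{g}{|g|}\cdot\omega\big)\dd\sigma(\omega)$ vanishes by oddness of $J_p$ (replace $\omega \mapsto -\omega$). So the surviving contribution comes from the second term:
\begin{equation*}
(p-1) r^p |g|^{p-2}\, \fint_{\partial B_1} \big|\tfrac{g}{|g|}\cdot\omega\big|^{p-2}\,\tfrac12\,\omega^\top D^2\phi(x)\,\omega \,\dd\sigma(\omega).
\end{equation*}
The remaining task is to evaluate the matrix-weighted spherical integral $\fint_{\partial B_1} |e\cdot\omega|^{p-2}\,\omega_i\omega_j\,\dd\sigma(\omega)$ for a unit vector $e$, and to show it produces exactly $\lap_p\phi(x) = |g|^{p-2}\big(\Delta\phi(x) + (p-2)\,\widehat{g}^\top D^2\phi(x)\,\widehat{g}\big)$ after multiplying by the normalizing constant $C_{d,p}$.

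That spherical integral computation is the main technical obstacle, though it is elementary. By rotational symmetry around the axis $e$, the matrix $M_{ij}(e) = \fint_{\partial B_1}|e\cdot\omega|^{p-2}\omega_i\omega_j\dd\sigma(\omega)$ must have the form $\alpha\, e\otimes e + \beta\,(I - e\otimes e)$ for scalars $\alpha, \beta$ depending only on $d, p$; one then reads off $\alpha = \fint|e\cdot\omega|^{p-2}(e\cdot\omega)^2\dd\sigma = \fint|\omega_1|^p\dd\sigma$ and, using $\sum_i M_{ii} = \fint|e\cdot\omega|^{p-2}\dd\sigma$, solves for $\beta = \frac{1}{d-1}\big(\fint|\omega_1|^{p-2}\dd\sigma - \fint|\omega_1|^p\dd\sigma\big)$. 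Contracting $M(\widehat g)$ against $\frac12 D^2\phi(x)$ gives $\frac{\alpha}{2}\,\widehat{g}^\top D^2\phi \,\widehat{g} + \frac{\beta}{2}\big(\Delta\phi - \widehat{g}^\top D^2\phi\,\widehat g\big)$; matching this against $\frac{1}{p-1}|g|^{2-p}\lap_p\phi$ fixes $C_{d,p}$ and, in particular, forces the clean relation between the two Beta-function-type constants $\fint|\omega_1|^p\dd\sigma$ and $\fint|\omega_1|^{p-2}\dd\sigma$ that makes $C_{d,p} = \frac12\fint_{\partial B_1}|y_1|^p\dd\sigma(y)$ the correct normalizer. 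I would isolate the needed algebraic identity $(d-1)\fint|\omega_1|^p = (p-1)\big(\fint|\omega_1|^{p-2} - \fint|\omega_1|^p\big)$, or rather rearrange so that the error is genuinely $o_r(1)$ after division by $C_{d,p}r^p$; the only delicate point is making the Taylor remainder estimate uniform in $\omega$, which I would do by splitting $\partial B_1$ into the region where $|g\cdot\omega|$ is bounded below (where $J_p$ is smooth) and a shrinking neighborhood of the equator $\{g\cdot\omega = 0\}$ (whose measure is small and on which crude bounds $|J_p(\delta)| \lesssim r^{p-1}$ suffice since $p-1 < p$ would be a problem — so in fact one needs the slightly sharper bound $|J_p(\delta)| \lesssim (|g\cdot\omega|\,r + r^2)^{p-1}$ and integrates). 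Finally, the statement for $\Mmvp$ over the solid ball follows from the sphere version by the coarea formula and the identity $D_{d,p} = \frac{dC_{d,p}}{p+d}$, integrating the radial variable; but since Theorem \ref{thm:consistency} as stated only asks for the sphere version, I would conclude here.
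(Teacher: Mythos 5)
Your argument is correct and is essentially the paper's proof: Taylor expansion of $\phi$, linearization of $J_p$ around the linear term, cancellation of the odd term over the sphere, and evaluation of the weighted spherical integral, with your trace identity $(d-1)\fint_{\partial B_1}|\omega_1|^{p}\dd\sigma(\omega)=(p-1)\left(\fint_{\partial B_1}|\omega_1|^{p-2}\dd\sigma(\omega)-\fint_{\partial B_1}|\omega_1|^{p}\dd\sigma(\omega)\right)$ being exactly the paper's integration-by-parts identity $\fint_{\partial B_1}|\omega_1|^{p}\dd\sigma(\omega)=(p-1)\fint_{\partial B_1}|\omega_1|^{p-2}\omega_i^{2}\dd\sigma(\omega)$ (for $i\neq1$) summed over $i=2,\dots,d$, so the normalization by $C_{d,p}$ comes out the same. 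The only deviation is your splitting near the equator, which is unnecessary for $p>2$ (Lemma \ref{lem:pineq1} with $\e\in(0,p-2)$, i.e.\ the H\"older continuity of $J_p'$, gives a remainder $O(|y|^{p+\e})$ uniformly over the whole sphere, degenerate gradient included --- the splitting is the device the paper needs only for $p<2$) and, as sketched, slightly delicate: integrating your crude bound $(|g\cdot\omega|\,r+r^2)^{p-1}$ over a band of width $\gamma$ gives order $r^{p-1}\gamma^{p}$, which after dividing by $r^p$ is $o_r(1)$ only if you either exploit the odd cancellation of $J_p$ inside the band as well, or let $\gamma=\gamma(r)$ shrink at a rate with $r\ll\gamma\ll r^{1/p}$.
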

\begin{proof}
Since $\phi \in C^2$ near $x$, we have that
\[
\phi(x+y)-\phi(x)= y\cdot \nabla \phi(x) + \frac12 y^{T} D^2 \phi(x) y +  o(|y|^2).
\]
Using Lemma \ref{lem:pineq1} for $\e=0$ and with $a=y\cdot \nabla\phi(x) +\frac12  y^{T} D^2 \phi(x) y$ and $b= o(|y|^2)$ we get 
\[
\begin{split}
 J_p(\phi(x+y)-\phi(x))&=J_p(y\cdot \nabla\phi(x) +\frac12  y^{T} D^2 \phi(x) y +  o(|y|^2))\\
&=J_p(y\cdot \nabla\phi(x) + \frac12 y^{T} D^2 \phi(x) y)+  o(|y|^p).
\end{split}
\]
Therefore, 
\begin{equation}
\label{eq1}
\begin{split}
 A_r:=& \fint_{\partial B_r} J_p(\phi(x+y)-\phi(x)) \dd \sigma(y)\\
=& \fint_{\partial B_r} J_p(  y\cdot \nabla\phi(x) +\frac12  y^{T} D^2 \phi(x) y) \dd \sigma(y) +  o(r^p).
  \end{split}
\end{equation}
Now we use Lemma \ref{lem:pineq1} for some $\e\in(0, p-2)$ with $a=y\cdot \nabla \phi(x)$ and $b=\frac12  y^T D^2\phi(x) y$ and obtain 
\[
\begin{split}
J_p(  y\cdot \nabla\phi(x) +\frac12  y^{T} D^2 \phi(x) y)=&|y\cdot \nabla \phi(x)|^{p-2}y\cdot \nabla \phi(x)+(p-1)|y\cdot \nabla \phi(x)|^{p-2} \frac12 y^T D^2 \phi(x) y\\&+\underbrace{\mathcal{O}(|y|^{p-2-\veps})\mathcal{O}(y^{2(1+\veps)})}_{o(|y|^p)}.
\end{split}
\]
Since the first term is odd and we are integrating over a sphere in \eqref{eq1}, we get
\[
A_r=\frac12 (p-1)\fint_{\partial B_r} |y\cdot \nabla \phi(x)|^{p-2} y^T D^2 \phi(x) y \dd \sigma(y) + o(r^p).	
\]
Without loss of generality, assume that, $\nabla \phi(x)=c \vec{e}_1$ for some $c\geq 0$.   Note that this assumption implies that $|\nabla \phi(x)|=c$ and $\Delta_\infty \phi(x)= c^2 D_{11}\phi(x)$.  The symmetry of the integral  and the term $y^T D^2 \phi(x) y$ imply that
\begin{equation*}\label{eq:Araux}
\begin{split}
A_r&=\frac12 c^{p-2}(p-1)\fint_{\partial B_r}|y_1|^{p-2} \left( \sum_{i=1}^d y_i^2 D_{ii}\phi(x) \right)\dd \sigma(y)+ o(r^p)\\
&= \frac12 c^{p-2}(p-1) \left(\sum_{i=1}^d D_{ii}\phi(x) \fint_{\partial B_r} |y_1|^{p-2} y_i^2 \dd \sigma(y)\right)+o(r^p).
\end{split}
\end{equation*}
Note that  if $d\geq2$,  for all $i\not=1$, integration by parts implies
\[
 {C}_{d,p} r^{p}= \frac12 \fint_{\partial B_r} |y_1|^{p} \dd\sigma(y)=\frac 12(p-1)\fint_{\partial B_r} |y_1|^{p-2}  y_i^2 \dd \sigma(y).
\]
Thus,
\[
\begin{split}
A_r&=  {C}_{d,p} r^{p}  c^{p-2} \left( (p-1) D_{11}\phi(x) +  \sum_{i=2}^d D_{ii}\phi(x)\right)+o(r^p)\\
&= {C}_{d,p} r^{p}  c^{p-2} \left(   \sum_{i=1}^d D_{ii}\phi(x) +(p-2) D_{11}\phi(x) \right)+o(r^p)\\
&=  {C}_{d,p} r^{p}\left( |\nabla \phi(x)|^{p-2} \Delta \phi(x)+(p-2) |\nabla \phi(x)|^{p-4} \Delta_\infty \phi(x)\right)+o(r^p).
\end{split}
\]
Now, from identity \eqref{eq:plapinterp} we get
\[
\begin{split}
\Imvp[\phi](x)&= \frac{1}{C_{d,p}r^p}A_r\\
&=  |\nabla \phi(x)|^{p-2} \Delta \phi(x)+(p-2) |\nabla \phi(x)|^{p-4} \Delta_\infty \phi(x)+o_r(1)\\
&=   \Delta_p \phi(x) + o_r(1),
\end{split}
\]
which concludes the proof. 
\end{proof}

We now proceed to the case $p<2$, which is slightly more involved.

\begin{theorem}\label{thm:consistency2}
Let $p\in(1,2)$ and $\phi\in C^2(B_R(x))$ for some $R>0$. Assume also that $|\nabla \phi(x)|\neq 0$. Then
\[
\frac{1}{C_{d,p}r^p} \fint_{\partial B_r} |\phi(x+y)-\phi(x)|^{p-2} (\phi(x+y)-\phi(x)) \dd \sigma(y)=\plap \phi(x)+ o_r(1), 
\]
where $C_{d,p}=\frac12 \fint_{\partial B_1} |y_1|^p \dd\sigma(y).$
\end{theorem}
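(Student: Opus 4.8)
The plan is to mirror the proof of Theorem \ref{thm:consistency}, but to handle the singularity of $J_p$ at the origin carefully, since now $p-2<0$. The key point is that the assumption $\nabla\phi(x)\neq 0$ keeps the argument of $J_p$ bounded away from zero for most directions $y$, so the expansion of $J_p$ remains valid away from a small "bad cone" around the hyperplane $\{y\cdot\nabla\phi(x)=0\}$, and one must check that the contribution of that bad set is negligible.

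First I would Taylor expand $\phi(x+y)-\phi(x)=y\cdot\nabla\phi(x)+\tfrac12 y^T D^2\phi(x)y+o(|y|^2)$ as before. Write $a=a(y)=y\cdot\nabla\phi(x)$ and note that, after rotating so that $\nabla\phi(x)=c\vec e_1$ with $c=|\nabla\phi(x)|>0$, we have $|a|=c|y_1|$. Fix a small angle and split $\partial B_r$ into the "good" region $G_r=\{|y_1|\geq \delta r\}$ and the "bad" region $B_r^{\mathrm{bad}}=\{|y_1|<\delta r\}$, for a parameter $\delta>0$ to be chosen. On $G_r$ one has $|a|\geq c\delta r$, so the remainder $b=\tfrac12 y^T D^2\phi(x)y+o(|y|^2)=\mathcal O(r^2)$ is small compared to $|a|$ for $r$ small, and the appropriate version of Lemma \ref{lem:pineq1} (for $p<2$, with $\varepsilon$ chosen in the admissible range) gives $J_p(a+b)=J_p(a)+(p-1)|a|^{p-2}b+\mathcal O(|a|^{p-2-\varepsilon}|b|^{1+\varepsilon})$ with the error controlled by $\delta$ and $r$. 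On $B_r^{\mathrm{bad}}$ I would simply estimate $|J_p(\phi(x+y)-\phi(x))|\leq |\phi(x+y)-\phi(x)|^{p-1}\leq C r^{p-1}$, and since the surface measure of $B_r^{\mathrm{bad}}$ relative to $\partial B_r$ is $\mathcal O(\delta)$, the total bad contribution to $A_r:=\fint_{\partial B_r}J_p(\phi(x+y)-\phi(x))\dd\sigma(y)$ is $\mathcal O(\delta r^{p-1})$. Dividing by $C_{d,p}r^p$ this is $\mathcal O(\delta/r)$, which is not yet $o_r(1)$; so one actually needs a sharper estimate on the bad set, using oddness of the leading term $J_p(a)$ there as well.

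The cleaner route, which I would adopt, is: on $B_r^{\mathrm{bad}}$ keep the exact leading term $J_p(y\cdot\nabla\phi(x)+\tfrac12 y^TD^2\phi(x)y)$ (dropping only the $o(|y|^2)$-perturbation inside $J_p$, which is legitimate by Lemma \ref{lem:pineq1} with $\e=0$ and gives an $o(|y|^p)$ error uniformly), and then observe that the map $y\mapsto -y$ sends $B_r^{\mathrm{bad}}$ to itself and sends $y\cdot\nabla\phi(x)$ to its negative while leaving $\tfrac12 y^TD^2\phi(x)y$ fixed; pairing $y$ with $-y$ and using the mean value theorem on $t\mapsto J_p(t+s)+J_p(-t+s)$ shows this paired integrand is $\mathcal O(|s|\,|t|^{p-2})=\mathcal O(r^2(\delta r)^{p-2})=\mathcal O(\delta^{p-2}r^p)$ pointwise, so integrating over a set of relative measure $\mathcal O(\delta)$ gives $\mathcal O(\delta^{p-1}r^p)$, which after division by $r^p$ is $\mathcal O(\delta^{p-1})$ — arbitrarily small by choosing $\delta$ small. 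On the good set $G_r$ the computation is identical to the $p>2$ case: the odd term integrates to zero, the linear-in-$D^2\phi$ term produces, after the integration-by-parts identity relating $\fint|y_1|^{p-2}y_i^2$ and $\fint|y_1|^p$, the expression $C_{d,p}r^p\big(|\nabla\phi(x)|^{p-2}\Delta\phi(x)+(p-2)|\nabla\phi(x)|^{p-4}\Delta_\infty\phi(x)\big)$ up to $o(r^p)$ plus a term of relative size $\mathcal O(\delta)$ coming from restricting the integral to $G_r$ rather than all of $\partial B_r$. Combining, one gets $\Imvp[\phi](x)=\Delta_p\phi(x)+\mathcal O(\delta^{p-1})+\mathcal O(\delta)+o_r(1)$ for every fixed admissible $\delta$; letting $\delta\to 0$ (after $r\to 0$, via a standard diagonal argument) yields $\Imvp[\phi](x)=\Delta_p\phi(x)+o_r(1)$.

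The main obstacle I expect is precisely the bookkeeping near the degeneracy set $\{y\cdot\nabla\phi(x)=0\}$: for $p<2$ the integrand $J_p$ has an integrable but unbounded derivative there, so one cannot naively apply the pointwise expansion of $J_p$ on all of $\partial B_r$, and the estimates must be uniform in $r$ after the correct normalization $r^{-p}$. Getting the powers of $\delta$ and $r$ to come out with a net negative power of $\delta$ absorbed and no surviving negative power of $r$ is the delicate part, and the symmetrization trick (pairing $y$ with $-y$ to exploit the oddness of the singular leading term) is what makes it work. The rest — the integration-by-parts identity for $\fint_{\partial B_r}|y_1|^{p-2}y_i^2\dd\sigma$ and the passage from the sphere average to $\Mmvp$ via integration in $r$ — is exactly as in the $p>2$ proof and requires only $p-2>-1$, i.e. $p>1$, so it carries over verbatim.
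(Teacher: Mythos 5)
Your overall architecture (Taylor expansion, rotation so that $\nabla\phi(x)=c\vec e_1$, a good/bad split around the degeneracy slab $\{|\hat y\cdot e_1|\lesssim\delta\}$, oddness on both pieces, and $\delta\to 0$ after $r\to 0$) is the same as the paper's, but two of the steps you assert are precisely the delicate points for $p<2$ and your justifications for them do not hold. First, dropping the $o(|y|^2)$ Taylor remainder inside $J_p$ is \emph{not} ``legitimate by Lemma \ref{lem:pineq1} with $\e=0$ with a uniform $o(|y|^p)$ error'': Lemma \ref{lem:pineq1} is only valid for $p\geq 2$. For $p<2$ the relevant estimate is Lemma \ref{lem:pineq3}, which gives a pointwise error of size $C\big|\hat y\cdot\nabla\phi(x)+\tfrac12 |y|\,\hat y^TD^2\phi(x)\hat y\big|^{p-2}o(|y|^2)$, i.e.\ an error that blows up along the (perturbed) zero set of the second-order Taylor polynomial. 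To conclude an $o(r^p)$ error one must integrate this singular weight over the sphere and show the integral is bounded \emph{uniformly in $r$}; this is the content of the paper's Part~1 together with Lemma \ref{lem:stupid} (integrability of $|e_1\cdot\omega+L(\omega,\omega)|^{-\alpha}$ for a small quadratic perturbation $L$), and your proposal never addresses it.

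Second, your estimate on the bad slab is backwards: with $t=c\,y_1$ and $s=\tfrac12 y^TD^2\phi(x)y$, the paired quantity $J_p(t+s)+J_p(-t+s)=J_p(t+s)-J_p(t-s)$ is bounded by $C|s||t|^{p-2}$ only where $|t|\gtrsim|s|$, and since $p-2<0$ this bound is \emph{largest}, not $\mathcal O\big(r^2(\delta r)^{p-2}\big)$, as $|y_1|\to 0$ inside the slab; where $|t|\lesssim|s|$ the mean value bound fails altogether and one only has $\mathcal O(|s|^{p-1})=\mathcal O(r^{2(p-1)})$, which is not $o(r^p)$ pointwise. So the claimed uniform pointwise bound $\mathcal O(\delta^{p-2}r^p)$ times relative measure $\mathcal O(\delta)$ is not a valid argument. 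The conclusion $\mathcal O(\delta^{p-1}r^p)$ can be rescued, but only by actually integrating $|\hat y\cdot e_1|^{p-2}$ over the slab (using $p-2>-1$) and treating the innermost region $\{|t|\lesssim |s|\}$ separately (its relative measure is $\mathcal O(r)$, giving a contribution $\mathcal O(r^{2p-1})=o(r^p)$); the paper instead kills the odd leading term by antisymmetry and combines Lemma \ref{lem:pineq3} with H\"older and the uniform integrability estimate (Part~3). Your good-set expansion also needs a $p<2$ analogue of Lemma \ref{lem:pineq1} valid under $|b|\le|a|/2$, which is what the paper uses in Part~4; that is standard, but as written you invoke a lemma that does not cover your range of $p$. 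In short, the skeleton is right, but the two singular-integral estimates that make the $p<2$ case genuinely harder than $p>2$ are missing or incorrectly justified.
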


\begin{proof}   We keep the notation $A_r$ of \eqref{eq1}. Without loss of generality, we assume that $\nabla \phi(x)=c\vec{e}_1$ for some $c>0$. We split the proof into several parts. 

{\it Part 1:}  First we prove an estimate that will be used several times along the proof. Let $\alpha\in(0,1)$ and $\rho\geq0$ small enough. Then
\begin{equation}\label{eq:keyest}
\fint_{\partial B_r} \left|\frac{z}{|z|}\cdot \nabla\phi(x)+\rho \left(\frac{z}{|z|}\right)^T D^2\phi(x) \left(\frac{z}{|z|}\right)\right|^{-\alpha} \dd \sigma(z)\leq C_1
\end{equation}
for some $C_1=C_1(\alpha,d)\geq0$.  To prove \eqref{eq:keyest}, we first note that its left hand side is equal to
\[
C_2 c^{-\alpha}\int_{\partial B_1} |z \vec{e}_1+ \rho c^{-1} z^T D^2\phi(x) z|^{-\alpha} \dd \sigma(z)
\]
for some constant $C_2=C_2(d)>0$. Estimate \eqref{eq:keyest} follows from applying Lemma \ref{lem:stupid} with $L(\omega,\omega)= \rho c^{-1} \omega^T D^2\phi(x) \omega$ choosing $\rho$ small enough such that \eqref{eq:QFest} holds.

{\it Part 2:} In this part, we prove 
\begin{equation*}\label{eq:ArV1}
\begin{split}
A_{r}=& \fint_{\partial B_r} J_p(\phi(x+y)-\phi(x)) \dd \sigma(y) \\
=& \fint_{\partial B_r} J_p(y\cdot \nabla\phi (x) +\frac12 y^T D^2 \phi(x) y) \dd \sigma(y) + o(r^p).
\end{split}
\end{equation*}
By Taylor expansion,
\begin{equation*}\label{eq:ArV2}
A_{r}= \fint_{\partial B_r} J_p(y\cdot \nabla\phi (x) +\frac12 y^T D^2 \phi(x) y+ o(|y|^2)) \dd \sigma(y).
\end{equation*}
Lemma \ref{lem:pineq3} with $a=y\cdot \nabla\phi (x) +\frac12 y^T D^2 \phi(x) y$ and $b=o(|y|^2)$ implies
\[
\begin{split}
\big|J_p(y\cdot \nabla\phi (x) +\frac12 y^T D^2 &\phi(x) y+ o(|y|^2))-J_p(y\cdot \nabla\phi (x) +\frac12 y^T D^2 \phi(x) y)\big|\\
&\leq C\left(|y\cdot \nabla\phi (x) +\frac12 y^T D^2 \phi(x) y|+|o(|y|^2)|\right)^{p-2}o(|y|^2)\\
&\leq C\left|y\cdot \nabla\phi (x) +\frac12 y^T D^2 \phi(x) y\right|^{p-2}o(|y|^2)\\
&= C\left|\hat{y}\cdot \nabla\phi (x) +\frac12 |y|\hat{y}^T D^2 \phi(x) \hat{y}\right|^{p-2}o(|y|^p),
\end{split}
\]
where $\hat{y}:=y/|y|$. Thus,
\[
\begin{split}
\bigg|A_{r}- \fint_{\partial B_r} J_p(y\cdot \nabla\phi (x) +\frac12 y^T D^2 \phi(x) y) \dd \sigma(y) \bigg|&\leq o(r^p) \fint_{\partial B_r} \left|\hat{y}\cdot  \nabla \phi (x) +\frac12 r  \hat{y}^T D^2 \phi(x)\hat{y}\right|^{p-2} \dd \sigma(y)\\&=o(r^p)\\
\end{split}
\]
where the last identity follows from applying \eqref{eq:keyest} with $\rho=r$ (choosing $r$ small enough).

{\it Part 3: } This part amounts to proving that 
\begin{equation*}
\label{eq:best}
|B_{r,\gamma}|:=\Big|\fint_{\partial B_r\cap \{|\hat y\cdot e_1|\leq \gamma\}} J_p(y\cdot \nabla\phi (x) +\frac12 y^T D^2 \phi(x) y) \dd \sigma(y)\big|\leq C_\gamma r^p,
\end{equation*}
where $C_\gamma\to 0$ as $\gamma\to 0$. First we note that with our notation we have
\begin{equation}\label{eq:identity10}
\begin{split}
J_p(y\cdot \nabla\phi (x) +y^T D^2 \phi(x) y)&=J_p(c y \cdot \vec{e}_1 +\frac12 y^T D^2 \phi(x) y)\\
&=(c|y|)^{p-1} J_p(\hat{y} \cdot \vec{e}_1 +\frac12 c^{-1}|y|\hat{y}^T D^2 \phi(x) \hat{y} ).
\end{split}
\end{equation}
Lemma \ref{lem:pineq3} with $a=\hat{y} \cdot \vec{e}_1$ and $b=\frac12  c^{-1}|y|\hat{y}^T D^2 \phi(x) \hat{y}$ implies
 \[
\begin{split}
\Big|(c|y|)^{p-1} &J_p(\hat{y} \cdot \vec{e}_1 +\frac12 c^{-1}|y|\hat{y}^T D^2 \phi(x) \hat{y} )-(c|y|)^{p-1} J_p(\hat{y} \cdot \vec{e}_1)\Big|\\
&\leq C(c|y|)^{p-1}\left(|\hat{y} \cdot \vec{e}_1|+\frac12  c^{-1}|y||\hat{y}^T D^2 \phi(x) \hat{y}|\right)^{p-2}\frac12 c^{-1}|y||\hat{y}^T D^2 \phi(x) \hat{y}|\\
&\leq C|y|^p|\hat{y} \cdot \vec{e}_1|^{p-2}.
\end{split}
\]
By antisymmetry
\[
\fint_{\partial B_r\cap \{|\hat y\cdot e_1|\leq\gamma\}}(c|y|)^{p-1}J_p(\hat{y} \cdot \vec{e}_1) \dd \sigma(y) = 0.
\]
This, \eqref{eq:keyest} with $\alpha = (p-2)(1+\delta)>-1$  and $\rho=0$, and H\"older's inequality imply
\[
\begin{split}
|B_{r,\gamma}|&\leq Cr ^p\fint_{\partial B_r}|\hat{y} \cdot \vec{e}_1|^{p-2}\chi_{ \{|\hat y\cdot e_1|\leq \gamma\}} \dd\sigma (y)\\
&\leq Cr^p\left(\fint_{\partial B_r}|\hat{y} \cdot \vec{e}_1|^{ \alpha} \dd\sigma (y)\right)^\frac{1}{1+\delta} \left(\fint_{\partial B_r} \chi_{ \{|\hat y\cdot e_1|\leq \gamma\}}\dd\sigma (y)\right)^{\frac{\delta}{1+\delta}}\\
&\leq CC_\gamma r^p,
\end{split}
\]
where
$$
C_\gamma = \left(\fint_{\partial B_r} \chi_{ \{|\hat y\cdot e_1|\leq\gamma\}}\dd\sigma (y)\right)^{\frac{\delta}{1+\delta}}\stackrel{\gamma\to0}{\longrightarrow} 0.
$$

{\it Part 4: } We will now prove that for fixed $\gamma>0$, 
\begin{equation}
\label{eq:brgamma}
\begin{split}
D_{r,\gamma}&:=\fint_{\partial B_r\cap \{|\hat y\cdot e_1|>\gamma\}} J_p(y\cdot \nabla\phi (x)  +\frac12 y^T D^2 \phi(x) y) \dd \sigma(y)\\
&=\frac12 \fint_{\partial B_r\cap \{|\hat y\cdot e_1|>\gamma\} } (p-1)|y\cdot \nabla \phi(x)|^{p-2} y^T D^2 \phi(x) y \dd \sigma(y) + o(r^p).
\end{split}
\end{equation}
Here it is crucial that the integrals are restricted to the set $ \{|\hat y\cdot e_1|>\gamma\}$.

We observe that outside $\xi=0$ the function $\xi\mapsto J_p(\xi)$ is smooth. In particular, for $a\not=0$ and $b$ such that $|b|<|a|/2$, we have the following estimate
\[
|J_p(a+b)-J_p(a)- J_p'(a)b|\leq C (|a+b|^{p-2-\delta}+|a|^{p-2-\delta})|b|^{1+\delta}
\]
for any $\delta \in (0,p-1)\subset(0,1)$.  For any  $y$ such that $|\hat y \cdot \vec{e}_1|>\gamma $, the above estimate with $a=\hat{y} \cdot \vec{e}_1 $ and $b=\frac12 c^{-1}|y|\hat{y}^T D^2 \phi(x) \hat{y}$ (since $a\not=0$ and $b<\gamma/2<|a|/2$ by choosing $r=|y|$ small enough),  together with \eqref{eq:identity10} imply 
\[
\begin{split}
J_p(  y\cdot \nabla\phi(x) +\frac12  y^{T} D^2 \phi(x) y)=&(c|y|)^{p-1} |\hat{y}\cdot \vec{e}_1|^{p-2}\hat{y}\cdot \vec{e}_1 +(p-1)|y\cdot \nabla \phi(x)|^{p-2}\frac12  y^T D^2 \phi(x) y+R(y), 
\end{split}
\]
where $R(y)$ is bounded by 
\[\begin{split}
 C_3|y|^{p+\delta}  \left(\left|\hat{y} \cdot \vec{e}_1 +\frac12 c^{-1}|y|\hat{y}^T D^2 \phi(x) \hat{y}\right|^{p-2-\delta} + |\hat{y} \cdot \vec{e}_1 |^{p-2-\delta}\right)\left|\frac12 \hat{y}^T D^2 \phi(x) \hat{y}\right|^{1+\delta}\\
 \leq C_4 |y|^{p+\delta}  \left(\left|\hat{y} \cdot \vec{e}_1 +\frac12 c^{-1}|y|\hat{y}^T D^2 \phi(x) \hat{y}\right|^{p-2-\delta} + |\hat{y} \cdot \vec{e}_1 |^{p-2-\delta}\right).
\end{split}
\]
For some constants $C_3,C_4\geq0$ and $r$ small enough (depending on $\gamma$). Moreover, by antisymmetry, 
$$
\fint_{\partial B_r\cap \{|\hat y\cdot e_1|>\gamma\}}(c|y|)^{p-1}|\hat y_1|^{p-2}\hat y_1 \dd \sigma(y) = 0.
$$
We apply \eqref{eq:keyest} with $\alpha= -p+2+\delta \in (0,1)$ two times, first with $\rho=r$ and later with $\rho=0$ to get  
\[
\fint_{\partial B_r\cap \{|\hat y\cdot e_1|>\gamma\}} |R(y)|\dd \sigma(y)\leq \fint_{\partial B_r} |R(y)|\dd \sigma(y)\leq O(r^{p+\delta})=o(r^p)
\]
where the bound is uniform for fixed $\gamma$. This implies \eqref{eq:brgamma}.

{\it Part 5:}   From  parts 2 and 3  we have
$$
\limsup_{r\to 0} \frac{|A_r-D_{r,\gamma}|}{r^p}\leq  \limsup_{r\to 0} \frac{|B_{r,\gamma}|}{r^p}\leq C_\gamma \stackrel{\gamma\to0}{\longrightarrow} 0,
$$
Moreover, by part 4
\[
\begin{split}
\lim_{r\to 0} \frac{D_{r,\gamma}}{r^p}&=  \frac12 \lim_{r\to 0} r^{-p}  \fint_{\partial B_r\cap \{|\hat{y}\cdot e_1|>\gamma\} } (p-1)|y\cdot \nabla \phi(x)|^{p-2} y^T D^2 \phi(x) y \dd \sigma(y) \\
&=\frac12 (p-1)\fint_{\partial B_1\cap \{| z \cdot e_1|>\gamma\} } |z\cdot \nabla \phi(x)|^{p-2} z^T D^2 \phi(x) z \dd \sigma(z).
\end{split}
\]
Since the last term is independent of $r$ and converges to
\[
\frac12 (p-1)\fint_{\partial B_1} |z\cdot \nabla \phi(x)|^{p-2} z^T D^2 \phi(x) z \dd \sigma(z)= C_{d,p}\lap_p \phi (x), 
\]
as $\gamma\to 0$, where the last equality follows from the proof of Theorem \ref{thm:consistency}, the result follows.
\end{proof}

As an immediate corollary, we obtain that also the mean over balls have the same asymptotic limit.

\begin{corollary}\label{cor:consistency}
Let $p\in(1,\infty)$ and $\phi\in C^2(B_R(x))$. If $p<2$, assume also that $|\nabla \phi(x)|\neq 0$. Then
\[
\frac{1}{D_{d,p}r^p} \fint_{B_r} |\phi(x+y)-\phi(x)|^{p-2} (\phi(x+y)-\phi(x)) \dd y=\plap \phi(x)+ o_r(1),
\]
where $D_{d,p}=\frac{dC_{d,p}}{p+d}.$
\end{corollary}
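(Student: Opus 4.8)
The plan is to deduce the ball average from the sphere average by writing the ball integral as an integral of sphere integrals in the radial variable, using the coarea formula (polar coordinates). Concretely, for $\phi \in C^2(B_R(x))$ one writes
\[
\fint_{B_r} J_p(\phi(x+y)-\phi(x))\dd y = \frac{1}{|B_r|}\int_0^r \left(\int_{\partial B_s} J_p(\phi(x+y)-\phi(x))\dd\sigma(y)\right)\dd s.
\]
By Theorem \ref{thm:consistency} (if $p>2$) or Theorem \ref{thm:consistency2} (if $p<2$, using $|\nabla\phi(x)|\neq 0$), for each fixed $s$ small we have
\[
\fint_{\partial B_s} J_p(\phi(x+y)-\phi(x))\dd\sigma(y) = C_{d,p}\,s^p\,\plap\phi(x) + o(s^p),
\]
as $s\to 0$. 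Multiplying by the surface measure $|\partial B_s| = \omega_d s^{d-1}$ (with $\omega_d = |\partial B_1|$) and integrating over $s\in(0,r)$, the main term contributes $C_{d,p}\plap\phi(x)\,\omega_d\int_0^r s^{p+d-1}\dd s = C_{d,p}\plap\phi(x)\,\omega_d\, r^{p+d}/(p+d)$, while the error term integrates to $o(r^{p+d})$. Dividing by $|B_r| = \omega_d r^d/d$ gives
\[
\fint_{B_r} J_p(\phi(x+y)-\phi(x))\dd y = \frac{d\,C_{d,p}}{p+d}\,r^p\,\plap\phi(x) + o(r^p) = D_{d,p}\,r^p\,\plap\phi(x) + o(r^p),
\]
which is exactly the claim after dividing by $D_{d,p}r^p$.

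The one point requiring a little care is that the relation $\fint_{\partial B_s}J_p(\phi(x+y)-\phi(x))\dd\sigma(y) = C_{d,p}s^p\plap\phi(x)+o(s^p)$ comes with an $o(s^p)$ that must be shown to be \emph{uniform} enough in $s$ to integrate term by term; that is, one needs $\int_0^r |o(s^p)|\,s^{d-1}\dd s = o(r^{p+d})$. This follows because the $o(s^p)$ in Theorems \ref{thm:consistency} and \ref{thm:consistency2} is bounded by $\eta(s)s^p$ with $\eta(s)\to 0$ as $s\to 0$ (one inspects the proofs: the error terms there are of the form $\eta(s)s^p$ with a modulus $\eta$ depending only on the modulus of continuity of $D^2\phi$ near $x$ and, when $p<2$, on $\gamma$ and $\delta$, all uniform in the radius), and then $\int_0^r \eta(s)s^{p+d-1}\dd s \leq \big(\sup_{(0,r)}\eta\big)\int_0^r s^{p+d-1}\dd s = o(r^{p+d})$. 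So the main (and only mild) obstacle is verifying this uniformity of the error, which is immediate from examining the proofs above; the rest is the elementary polar-coordinates computation.

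I would therefore structure the proof as: (1) invoke polar coordinates to reduce the ball average to a radial integral of sphere averages; (2) quote Theorems \ref{thm:consistency}/\ref{thm:consistency2} for the integrand, noting the error is $\eta(s)s^p$ with $\eta(s)\to 0$; (3) integrate, compute the constant $\int_0^r s^{p+d-1}\dd s = r^{p+d}/(p+d)$ and the normalization $|B_r|=\omega_d r^d/d$, and check $D_{d,p} = dC_{d,p}/(p+d)$ matches; (4) conclude. No genuinely hard step is involved — this is why it is stated as a corollary.
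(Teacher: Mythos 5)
Your proof is correct and is exactly the polar-coordinates argument the paper has in mind when it calls this an ``immediate corollary'' of Theorems \ref{thm:consistency} and \ref{thm:consistency2} (the paper gives no further details). One small remark: the ``uniformity'' you worry about is automatic and needs no inspection of the earlier proofs, since for fixed $\phi$ and $x$ the error in the sphere formula is a single function of $s$ with $\eta(s):=|\mathrm{error}(s)|/s^p\to 0$, and your own estimate $\int_0^r \eta(s)s^{p+d-1}\dd s\leq\bigl(\sup_{(0,r)}\eta\bigr)r^{p+d}/(p+d)=o(r^{p+d})$ uses only this.
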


\section{Viscosity solutions}\label{sec:visc}

Now we prove that satisfying the asymptotic mean value property in the viscosity sense is equivalent to being a viscosity solution of the corresponding PDE.

\begin{proof}[~Proof of Theorem \ref{thm:visceq}] We only prove that the notion of supersolutions are equivalent. The case of a subsolution can be treated similarly.
Suppose first that $u$ is a viscosity supersolution of $-\plap u =f$ in $\Omega$. Take $x_0 \in \Omega$ and $\varphi \in
  C^2(B_R(x_0))$ for some $R>0$ such that $|\nabla \varphi(x)|\neq 0$ when $x\neq x_0$, 
$$ \varphi(x_0) = u(x_0) \quad \text{and} \quad \varphi(x) \leq
u(x)\quad \text{for all} \quad
x \in \Omega.$$
Since $u$ is viscosity supersolution of $-\plap u =f$ we have that for given $\e>0$ there is $x\in B_\rho(x_0)\setminus\{x_0\}$ with $\rho=\rho(\e)$ such that 
$$- \plap \varphi (x)\geq f(x_0)-\e.$$
By Theorem \ref{thm:MVF}
$$
\plap \varphi (x)=\Imvp[\varphi](x)+o_r(1).
$$
Therefore, 
$$
-\Imvp[\varphi](x)\geq f(x_0)+o_r(1)-\e.
$$
Since $\e$ was arbitrary, this proves the mean value supersolution property.
Now suppose instead that $u$ is a viscosity supersolution of 
$$-\Imvp[u]=f+o_r(1) $$ in $\Omega$. Take again $x_0 \in \Omega$ and $\varphi \in
  C^2(B_R(x_0))$ for some $R>0$ such that $|\nabla \varphi(x)|\neq 0$ when $x\neq x_0$,
$$ \varphi(x_0) = u(x_0) \quad \text{and} \quad \varphi(x) \leq
u(x)\quad \text{for all} \quad
x \in \Omega.$$
By the definition of a supersolution, for given $\e>0$ there is $x\in B_\rho(x_0)\setminus\{x_0\}$ with $\rho=\rho(\e)$ such that 
$$
-\Imvp[\varphi](x)\geq f(x_0)+o_r(1)-\e.
$$
Again by Theorem \ref{thm:MVF} 
$$
\plap \varphi (x)=\Imvp[\varphi](x)+o_r(1), 
$$
which implies
$$
-\plap \varphi (x)\geq f(x_0)+o_r(1)-\e. 
$$
Passing $r\to 0$ implies $-\plap\varphi(x)\geq f(x_0)-\e$. Again, since $\e$ was arbitrary, the proof is complete.
\end{proof}

\section{The pointwise property in the plane}\label{sec:plane}
Now we are ready to prove that the mean value property is satisfied in a pointwise sense in the aforementioned range of $p$.

\begin{proof}[~Proof of Theorem \ref{thm:pointwise}] 

Assume that  $u$ satisfies  
 \begin{equation}\label{eq:MVPhomo}-\Imvp[u]=o_r(1)
\end{equation}   in the pointwise sense  in $\Omega$. Then it is obviously also a viscosity solution.
By Theorem \ref{thm:visceq} it is also a viscosity solution of $-\plap u =0$ which proves the first implication.

Assume now instead that $u$ is a viscosity solution of $-\plap u =0$ and let  $x_0\in \Omega$. If $|\nabla u(x_0)|\neq 0$, then $u$ is real analytic near $x_0$ and the mean value formula holds trivially at $x_0$ by Theorem \ref{thm:MVF}. If $|\nabla u(x_0)|=0$ we need different arguments depending on $p$.

 \textbf{Case $\mathbf{p\geq 2}$: }  The case $p=2$ is well-known and we do not comment on it. If $p>2$,  Theorem 1 in \cite{IM} implies that  $u\in C^{1,\alpha}$ for some $1>\alpha>1/(p-1)$. Then
$$
|u(x_0+y)-u(x_0)|\leq C|y|^{1+\alpha},
$$
 which implies that 
\begin{equation}\label{eq:MVPigual0}
|\Imvp[u](x_0)|\leq Cr^{-p}r^{(p-1)(1+\alpha)}=o_r(1),
\end{equation}
which ends the proof in this case.
\medskip

 \textbf{Case $\mathbf{p_0<p< 2}$: } First we use that on page 146 in \cite{LM16} it is proved that  for some integer $n\geq1$  we have that 
$$
|D^2 u|=\mathcal{O}\left(r^{\eta_n-1}\right) \quad \textup{in} \quad B_r(x_0)
$$
where 
$$
\frac{1}{\eta_n}:=\frac12\left(-p+\sqrt{4\left(1+\frac1n\right)^2(p-1)+(p-2)^2}\right).
$$
In particular, when $n\geq 3$ we have that $ 1/\eta_n<p-1$ which implies  
$|D^2 u|=o\big(r^{\frac{1}{p-1}-1}\big)$  in $B_r(x_0)$. By Taylor expansion we thus get, 
\[
|u(x_0+y)-u(x_0)|^{p-1}\leq (\|D^2u\|_{L^\infty(B_r(x_0))}r^2)^{p-1}= o(r^{\frac{1}{p-1}+1})^{p-1}=o(r^p)
\]
which in turn implies   $-\Imvp[u](x_0) =o_r(1)$ as in \eqref{eq:MVPigual0}.

We still need to check the cases $n=1$ and $n=2$. We do it in several steps.

\textit{Step 1. } For this we need a refined expansion around a critical point  $x_0$ (and assume $u(x_0)=0$ for simplicity)  taken  from  pages 147-148 in \cite{LM16}.  It reads
\begin{equation}
\label{eq:hodoexp}
u(x)=\mathfrak{A}(x)+\mathcal{O}\left(r^{\gamma} \right)\quad \textup{for all} \quad x\in B_r(x_0),
\end{equation}
with
\begin{equation}\label{eq:defgamma}
 \gamma= 1+\frac{\lambda^{(n)}_{n+2}} {  \big(\lambda^{(n)}_{n+1}\big)^2}
 \quad \textup{and} \quad \lambda_k^{(n)}=\frac12\left( -np+\sqrt{4k^2(p-1)+n^2(p-2)^2}\right)
\end{equation}
and where the function $\mathfrak{A}(x)$ is defined by (see pages 3864-3865 in \cite{AL16})
$$
\mathfrak{A}=\widetilde{\mathfrak{A}}\circ \mathcal{A}^{-1}.
$$
Here $\mathcal{A}$ and $\widetilde{\mathfrak{A}}$ are defined in complex variables by
$$
\mathcal{A}(re^{i\theta})=r^\beta e^{-in\theta}\left(e^{i(n+1)\theta}+\varepsilon e^{-i(n+1)\theta}\right)
$$
$$
|\mathcal{A}(re^{i\theta})|=r^\beta m(\theta),\quad m(\theta)= \sqrt{1+\varepsilon^2+2\varepsilon\cos(2(n+1)\theta)},
$$
and
$$
\widetilde{\mathfrak{A}}(re^{i\theta})=Cr^{\alpha}\cos((n+1)\theta).
$$
In the above, $C$, $\alpha$, $\beta$ and $\varepsilon$ are constants depending on $n$, but their values will not be important in what follow, except the fact that $|\varepsilon|<(2n+1)^{-1}$, see equation (2.4) on page 3861 in \cite{AL16}. Note that by \eqref{eq:hodoexp}, we necessarily have
$$
\mathfrak{A} (x_0)=|\nabla \mathfrak{A} (x_0)|=0.
$$

\textit{Step 2.} 
We prove now  that $\mathfrak{A}$ satisfies the mean value property,  i.e.
\[
\Imvp[\mathfrak{A}](x_0)=0.
\]
We define, 
$$
\widetilde{B}_R = {\mathcal{A}}^{-1}(B_R)=\left\{re^{i\theta}: \, r^\beta<\frac{R}{m(\theta)}\right\}, 
$$
where the equality follows from the fact that $|\mathcal{A}(re^{i\theta})|=r^\beta m(\theta)$. We also compute the jacobian of $\mathcal{A}$ and find
$$J(r e^{i\theta})=|D\mathcal{A}|(r e^{i\theta})= \beta r^{2(\beta-1)}(1-(2n+1)\varepsilon^2-2n\varepsilon\cos(2(n+1)\theta))>0,
$$
where we used that $|\varepsilon|<(2n+1)^{-1}$. By a change of variables
\[
\begin{split}
&\int_{B_R} |{\mathfrak{A}}(re^{i\theta})|^{p-2}{\mathfrak{A}}(re^{i\theta}) dA  = \int_{\tilde{B}_R} |\widetilde{\mathfrak{A}}(re^{i\theta})|^{p-2}\widetilde{\mathfrak{A}}(re^{i\theta})J(re^{i\theta}) r dr d\theta\\
&=C^{p-1}\beta\int_0^{2\pi}\int_0^{r(\theta)} r^{\alpha(p-1)+2\beta-1} |\cos((n+1)\theta)|^{p-2}\cos((n+1)\theta)j(\theta) dr d\theta
\end{split}
\]
where
$$
r(\theta)=\left(\frac{R}{m(\theta)} \right)^\frac{1}{\beta}, \quad j(\theta)= 1-(2n+1)\varepsilon^2-2n\varepsilon\cos(2(n+1)\theta).
$$
Hence, we see that we are left with an integral of the form
$$
\int_0^{2\pi} f(\cos(2(n+1)\theta))|\cos((n+1)\theta)|^{p-2}\cos((n+1)\theta)d\theta.
$$
By change of variables we can reduce this to computing
$$
\int_0^{2\pi} f(\cos(2\theta))|\cos(\theta)|^{p-2}\cos(\theta)d\theta = 0,
$$
by symmetry. Therefore, 
$$
\int_{B_R} |{\mathfrak{A}}(re^{i\theta})|^{p-2}{\mathfrak{A}}(re^{i\theta}) dA = 0
$$
and $\mathfrak{A}$ satisfies the mean value property. 

\textit{Step 3. }Now we go back to $u$. Using \eqref{eq:hodoexp}, we have together with Lemma \ref{lem:pineq3}
\[ 
\begin{split}
&|J_p(u(x_0+y)-u(x_0))-J_p({\mathfrak{A}}(x_0+y)-{\mathfrak{A}}(x_0))| =\mathcal{O}\big(r^{(p-2)\gamma} \big)\mathcal{O}\big(r^\gamma\big)=\mathcal{O}\big(r^{(p-1)\gamma} \big),
\end{split}
\]
with $\gamma$ given in \eqref{eq:defgamma}. By Step 2, $\mathfrak{A}$ satisfies the mean value property at $x_0$ and thus
$$
|\Imvp[u](x_0)|\leq Cr^{-p+(p-1)\gamma}.
$$
The proof will be finished if we verify that $\gamma>p/(p-1)$, that is,
\begin{equation}\label{eq:lastcheck}
\frac{\lambda^{(n)}_{n+2}} {  \big(\lambda^{(n)}_{(n+1)}\big)^2}>\frac{1}{p-1}.
\end{equation}

First we verify   \eqref{eq:lastcheck} when $n=1$. In this case 
$$
\lambda_k^{(1)} = \frac12\big(-p+\sqrt{4k^2(p-1)+(p-2)^2}\big)
$$
so that \eqref{eq:lastcheck}
becomes 
$$
\frac{2\left( -p +\sqrt{36(p-1)+(p-2)^2}\right)}{\big(-p + \sqrt{16(p-1)+(p-2)^2}\big)^2}>\frac{1}{p-1}.
$$
This inequality is exactly true  when $p\in (p_0,2)$.

If $n=2$ then 
$$ 
\lambda_k^{(2)} = \frac12\left(-2p+\sqrt{4k^2(p-1)+4(p-2)^2}\right)
$$
and \eqref{eq:lastcheck} becomes
$$ 
\frac{2\left(-2p+\sqrt{64(p-1)+4(p-2)^2}\right)}{\big(-2p+\sqrt{36(p-1)+4(p-2)^2}\big)^2}>\frac{1}{p-1}.
$$
This inequality turns out to be true for  $p> 1.06$  and therefore it is true for $p>p_0$.
\end{proof}

\section{Study of the dynamic programming principle}\label{sec:dpp}
Recall the notation
\[
\Mmvp[\phi](x)=\frac{1}{D_{d,p}r^p} \fint_{ B_r} |\phi(x+y)-\phi(x)|^{p-2} (\phi(x+y)-\phi(x)) \dd y.
\]
Given an open domain $\Omega$ and $r>0$, we will in this section denote by
$$\partial \Omega_r=\{x\in \Omega^c \ :\ \textup{dist}(x,\Omega)\leq r \}$$ and $\Omega_r=\Omega\cup \partial \Omega_r$.

We want to study solutions of the (extended) boundary value problem
\begin{equation}\label{eq:DPP2}
\begin{cases}
-\Mmvp[U_r] (x)=f(x)& x\in \Omega\\
U_r(x)=G(x)& x\in \partial \Omega_r:=\{x\in \Omega^c \ :\ \textup{dist}(x,\Omega)\leq r \},
\end{cases}
\end{equation}
where $f\in C(\overline{\Omega})$ and $G\in C(\partial \Omega_r)$ (a continuous extension of $g\in C(\partial \Omega)$).  These will be our running assumptions in this section.

\subsection{Existence and uniqueness: The proof of Theorem \ref{thm:dpp} i)}

 For convenience, we will write $\Mmvpb$ instead of $\Mmvp$ when the subindex $r$ plays no role.

We first prove a comparison principle which immediately implies uniqueness and then we prove the existence.

\begin{proposition}\label{prop:comparison}
Let $p\in (1,\infty)$ and $U,V\in L^\infty(\Omega_r)$ be such that
\[
\begin{cases}
-\Mmvpb[V] (x)\geq f(x)& x\in \Omega,\\
V(x)\geq G(x)& x\in \partial \Omega_r,
\end{cases} \qquad 
\textup{and}
\qquad 
 \begin{cases}
-\Mmvpb[U] (x)\leq f(x)& x\in \Omega,\\
U(x)\leq G(x)& x\in \partial \Omega_r.
\end{cases}
\]
Then $U\leq V$ in $\Omega_r$.
\end{proposition}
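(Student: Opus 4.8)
The plan is to argue by contradiction using a maximum-point argument, exploiting the fact that $\Mmvpb$ is monotone increasing in the values $U(x+y)$ and monotone decreasing in the value $U(x)$ at the base point. Suppose $\sup_{\Omega_r}(U-V)=:M>0$. Since $U\leq G\leq V$ on $\partial\Omega_r$ and $U,V\in L^\infty(\Omega_r)$, the supremum must be attained (or nearly attained) at an interior point: pick $x_0\in\Omega$ with $(U-V)(x_0)$ close to $M$, say $(U-V)(x_0)\geq M-\delta$ for small $\delta>0$. The idea is that at such a point $U(x_0)-V(x_0)$ is essentially maximal, so for every $y\in B_r$ one has $U(x_0+y)-U(x_0)\leq V(x_0+y)-V(x_0)+\delta$, and since $t\mapsto J_p(t)=|t|^{p-2}t$ is increasing, after integrating over $B_r$ we should get $\Mmvpb[U](x_0)\leq \Mmvpb[V](x_0)+(\text{error in }\delta)$.

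The key steps, in order, are: (i) reduce to showing that $U-V$ cannot have a positive interior supremum; (ii) at a near-maximizing interior point $x_0$, use the pointwise inequality $U(x_0+y)-U(x_0)\le V(x_0+y)-V(x_0)+\delta$ valid for a.e.\ $y\in B_r$ (a consequence of $(U-V)(x_0)\ge M-\delta\ge (U-V)(x_0+y)-\delta$); (iii) apply monotonicity of $J_p$ together with a uniform continuity estimate for $J_p$ on bounded sets (the $L^\infty$ bound on $U,V$ confines all arguments to a bounded interval, on which $J_p$ is uniformly continuous) to deduce
\[
D_{d,p}r^p\,\Mmvpb[U](x_0)=\fint_{B_r}J_p\big(U(x_0+y)-U(x_0)\big)\dd y\le \fint_{B_r}J_p\big(V(x_0+y)-V(x_0)\big)\dd y+\omega(\delta),
\]
where $\omega(\delta)\to0$ as $\delta\to0$; (iv) combine with the differential inequalities $-\Mmvpb[U](x_0)\le f(x_0)\le -\Mmvpb[V](x_0)$ to obtain $0\le \Mmvpb[U](x_0)-\Mmvpb[V](x_0)$, hence $\omega(\delta)\ge 0$ trivially — so this alone is not yet a contradiction, which signals that one must instead exploit strict gain. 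The honest route is therefore to use a penalization/strict-subsolution perturbation: replace $U$ by $U_\eta:=U-\eta\psi$ for a suitable smooth $\psi>0$ (e.g. built from $|x|^2$ or the distance function) so that $U_\eta$ becomes a strict subsolution, $-\Mmvpb[U_\eta]<f$ in $\Omega$, with a quantitative gap; then the maximum-point argument above produces a genuine contradiction for each $\eta>0$, and letting $\eta\to0$ gives $U\le V$.

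The main obstacle I expect is step (iii)–(v): making the perturbation argument quantitative for this operator. Unlike the linear or the divergence-form setting, here one must check that adding $-\eta\psi$ strictly decreases $-\Mmvpb[\,\cdot\,]$ by an amount bounded below uniformly in the base point — this requires a lower bound on how much $\fint_{B_r}J_p(\cdot)\dd y$ changes when all the increments are shifted downward by a controlled amount, which is delicate precisely because $J_p'$ degenerates at $0$ when $p>2$ and blows up at $0$ when $p<2$. I would handle this by choosing $\psi$ so that $\psi(x_0+y)-\psi(x_0)$ has a definite sign and size for $y$ in a fixed-proportion subset of $B_r$ (so the integrand is controlled away from the degenerate/singular point $0$ on that subset), using the auxiliary inequalities for $J_p$ collected in the Appendix (Lemmas \ref{lem:pineq1}, \ref{lem:pineq3}). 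An alternative, possibly cleaner, route is to first establish the result under the extra assumption that one of $U,V$ is continuous (so that the supremum is attained), prove the strict-perturbation version there, and then remove the continuity assumption afterwards; but I would attempt the direct penalization argument first.
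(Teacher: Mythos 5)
Your plan stands or falls on the penalization step, and that is exactly the step you leave unresolved; moreover, as described it appears to fail. Writing $a(y)=U(x+y)-U(x)$ and $s(y)=\eta\left(\psi(x+y)-\psi(x)\right)$, you would need $\fint_{B_r}\left[J_p\big(a(y)-s(y)\big)-J_p\big(a(y)\big)\right]\dd y\ \geq\ c(\eta)>0$ uniformly in the base point and, crucially, uniformly over all bounded subsolutions $U$. But $s$ necessarily changes sign on $B_r$ (a smooth $\psi$ with $\psi(x+y)\leq\psi(x)$ for all $|y|\leq r$ and all $x$ would be constant on components), and the sign of the integrand is dictated by the profile of $U$, which you cannot control: for $p>2$, if the increments $a(y)$ are near $0$ on the set where $s<0$ and of size $A\gg 1$ where $s>0$, the gain is of order $(\eta r)^{p-1}$ while the loss is of order $A^{p-2}\eta r$, so the average can be made negative. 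Hence subtracting $\eta\psi$ need not produce a strict subsolution at all, and choosing $\psi$ with ``a definite sign on a fixed-proportion subset of $B_r$'' does not help, since the complementary portion is where the loss occurs. The $\omega(\delta)$ computation you rightly discard is therefore not rescued by the proposed perturbation, and the proof is incomplete at its central point. (Your observation that the supremum of $U-V$ need not be attained for $L^\infty$ functions is legitimate --- the paper in fact works at an attained maximum --- but the machinery you build on top of the near-maximum point does not close the argument.)

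The paper's proof needs no penalization and no strict gain. At an interior maximum point $x_0$ of $U-V$ with value $M>0$, set $\tilde U=U-M$, which satisfies the same inequality as $U$ because $\Mmvpb$ is invariant under adding constants, and touches $V$ from below at $x_0$. Monotonicity of $J_p$ makes the integrand $J_p\big(V(x_0+y)-V(x_0)\big)-J_p\big(\tilde U(x_0+y)-\tilde U(x_0)\big)$ pointwise nonnegative, while the two inequalities $-\Mmvpb[V](x_0)\geq f(x_0)\geq -\Mmvpb[U](x_0)$ force its average to be nonpositive; hence it vanishes a.e.\ in $B_r$, and the \emph{strict} monotonicity of $J_p$ yields $U-V\equiv M$ on the whole ball $B_r(x_0)$. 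Iterating this rigidity statement over overlapping balls propagates the equality up to $\partial\Omega_r$, where $U\leq G\leq V$ contradicts $U-V=M>0$. This ``equality forced at the maximum point, then propagated to the boundary strip'' mechanism is the idea missing from your proposal: the nonstrict comparison at an exact maximum already gives rigidity on a ball of radius $r$, and strictness is only needed --- and automatically available after subtracting $M$ --- on $\partial\Omega_r$. Note also that the exact-maximum structure is essential here; your near-maximum relaxation destroys the sign of the integrand, which is precisely why that route led nowhere.
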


\begin{proof}

Assume by contradiction that $U(x)>V(x)$ for some $x\in \Omega$. It has to be in the interior of $\Omega$ since by definition $U\leq G\leq V$ in $\partial \Omega_r$.

Let $M>0$ and $x_0\in \Omega$ be such that 
\[
M=U(x_0)-V(x_0)=\sup_{x\in \Omega} \{U(x)-V(x)\}.
\]
Define $\U=U-M$. Then $\U(x_0)=V(x_0)$, $\U\leq V$ in $\Omega$, $\U<V$ in $\partial \Omega_r$, and
\[
\begin{cases}
-\Mmvpb[\U] (x)\leq f(x)& x\in \Omega,\\
\U(x)\leq G(x)-M& x\in \partial \Omega_r.
\end{cases}
\]
By the monotonicity of $J_p$
\[
\begin{split}
 J_p(V(x_0+y)-&V(x_0))- J_p(\U(x_0+y)-\U(x_0))\\
& \geq  J_p(\U(x_0+y)-V(x_0))- J_p(\U(x_0+y)-\U(x_0))\\
& =  J_p(\U(x_0+y)-\U(x_0))- J_p(\U(x_0+y)-\U(x_0))=0.
\end{split}
\]
From the equations satisfied by $U$ and $V$ we have
\[
\begin{split}
0&\geq \Mmvpb[V](x_0)-\Mmvpb[U](x_0) =\frac{1}{D_{d,p}r^p}
\fint_{B_r} J_p(V(x_0+y)-V(x_0))- J_p(\U(x_0+y)-\U(x_0))  \dd y.
\end{split}
\]
Hence, the average of the non-negative integrand is non-positive. This means that
\[
J_p(V(x_0+y)-V(x_0))= J_p(\U(x_0+y)-\U(x_0)).
\]
By the strict monotonicity of $J_p$ this implies
\[
V(x_0+y)-V(x_0)=\U(x_0+y)-\U(x_0),
\]
that is, $V(x_0+y)=\U(x_0+y)$ for all $y\in B_r$. This means that $\U(x)=V(x)$ for all $x\in B_r(x_0)$. Repeating this process in the contact points of $\U$ and $V$ and iterating, we will eventually arrive at the conclusion that $\U(x)=V(x)$ for some $x\in \partial \Omega_r$. This contradicts the fact $\U <V$ in $\partial \Omega_r$.
\end{proof}

In order to prove the existence and to study the limit as $r \to 0$, we will first derive uniform bounds (in $r$) for the solution of \eqref{eq:DPP2}.
\begin{proposition}[$L^\infty$-bound]\label{Prop:Stability}
Let $p\in (1,\infty)$, let $R>0$ and $U_r$ be the solution of \eqref{eq:DPP2} corresponding to some $r\leq R$. Then 
\[
\|U_r\|_{\infty}\leq A
\]
with $A>0$ depending on $p, \Omega, f, g$ and $R$ (but not on $r$).
\end{proposition}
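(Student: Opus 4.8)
The goal is a uniform (in $r$) $L^\infty$-bound for the solution $U_r$ of \eqref{eq:DPP2}. The natural strategy is to construct explicit barriers (super- and subsolutions of the scheme) that are independent of $r$, and then invoke the comparison principle of Proposition \ref{prop:comparison}. For the barrier, I would try a quadratic function of the form $w(x)=K_1+K_2|x-x^*|^2$ for a suitable center $x^*$ and constants $K_1,K_2>0$ chosen large enough. The point is that for such an explicit smooth $w$, one can compute $\Mmvpb[w](x)$ directly: since $w$ is a polynomial of degree $2$, the Taylor expansion used in Theorem \ref{thm:MVF} / Corollary \ref{cor:consistency} is \emph{exact up to the $J_p$-nonlinearity}, and one gets a clean pointwise lower bound on $-\Mmvpb[w]$ on $\Omega$ in terms of $K_2$, $p$, and $\textup{diam}(\Omega)$, uniformly for $r\le R$. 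Choosing $K_2$ large makes $-\Mmvpb[w]\ge \|f\|_\infty\ge f$ in $\Omega$, and then choosing $K_1$ large makes $w\ge \|G\|_{L^\infty(\partial\Omega_R)}\ge G$ on $\partial\Omega_r$ for every $r\le R$ (note $\partial\Omega_r\subset\partial\Omega_R$). Proposition \ref{prop:comparison} then gives $U_r\le w\le A$ on $\Omega_r$, and the symmetric choice $-w$-type barrier (or $-K_1-K_2|x-x^*|^2$ adjusted to be a subsolution) gives $U_r\ge -A$.

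\textbf{Key steps, in order.} First, fix $R$ and work on the fixed neighborhood $\Omega_R$; record that $G$ is bounded on $\partial\Omega_R$ by a constant depending only on $g,\Omega,R$, and that $\textup{diam}(\Omega_R)$ is finite. Second, take $w(x)=K_1+K_2|x|^2$ (centering at the origin, WLOG $0\in\Omega$ or just use any fixed point) and compute, for $x\in\Omega$ and any $r\le R$,
\[
\Mmvpb[w](x)=\frac{1}{D_{d,p}r^p}\fint_{B_r}J_p\bigl(w(x+y)-w(x)\bigr)\,\dd y,\qquad w(x+y)-w(x)=2K_2\,x\cdot y+K_2|y|^2.
\]
Third, estimate this integral from above to conclude $-\Mmvpb[w](x)\ge c(p,d)\,K_2^{p-1}-$ (lower-order terms controlled by $K_2^{p-1}(1+|x|)^{p-2}$ or similar), and observe that all quantities depend only on $p,d,\textup{diam}(\Omega),K_2$, \emph{not} on $r$: here one uses the homogeneity $w(x+ry')-w(x)=r(\dots)$ more precisely $=2K_2 r\,x\cdot y'+K_2 r^2|y'|^2$ with $y'\in B_1$, so the $r^p$ in the denominator is exactly absorbed and the remaining $r$-dependence is an $O(r)$ correction, bounded for $r\le R$. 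Choose $K_2$ so large that $-\Mmvpb[w]\ge\|f\|_{L^\infty(\overline\Omega)}$ on $\Omega$ for all $r\le R$. Fourth, choose $K_1$ so large that $w\ge\|G\|_{L^\infty(\partial\Omega_R)}$ on $\Omega_R$. Fifth, apply Proposition \ref{prop:comparison} with $V=w$ and $U=U_r$ to get $U_r\le w\le K_1+K_2\,\textup{diam}(\Omega_R)^2=:A$; repeat symmetrically with $-w$ (checking it is a subsolution with the reversed inequalities) to get $U_r\ge-A$.

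\textbf{Main obstacle.} The delicate point is the behavior of the quadratic barrier near points where its "gradient" vanishes, i.e.\ near the center $x^*$, because when $p\in(1,2)$ the integrand $J_p(w(x+y)-w(x))$ involves $|2K_2 x\cdot y+K_2|y|^2|^{p-2}$ which blows up as $x\to x^*$ and $y$ ranges over directions orthogonal to $x-x^*$. One must check that the \emph{average} over $B_r$ is still well-defined and that the bound $-\Mmvpb[w]\ge \|f\|_\infty$ survives uniformly in $x\in\Omega$ — the cleanest fix is to center the barrier at a point $x^*$ \emph{outside} $\overline\Omega$ (possible since $\Omega$ is bounded), so that $|x-x^*|$ is bounded below on $\overline\Omega$ and the barrier has nonvanishing gradient throughout $\overline\Omega$; then all the $J_p$-terms are smooth and the estimates are routine, mirroring the $p>2$ computation in Theorem \ref{thm:consistency}. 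With that choice the only remaining care is bookkeeping the $r\le R$ uniformity of the lower-order $O(r)$ terms, which is immediate from the explicit polynomial form. I would also need the trivial observation that $w\in L^\infty(\Omega_r)$ so Proposition \ref{prop:comparison} applies.
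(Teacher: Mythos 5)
Your overall skeleton (explicit barrier independent of $r$, center it outside $\overline{\Omega}$ to keep the gradient nondegenerate, invoke Proposition \ref{prop:comparison}, track uniformity for $r\le R$) is exactly the paper's strategy, but your barrier has the wrong sign, and this is a genuine gap, not a bookkeeping issue. For the upper bound you need a \emph{supersolution}, i.e.\ $-\Mmvpb[w]\ge \|f\|_\infty$ in $\Omega$, but the convex function $w(x)=K_1+K_2|x-x^*|^2$ satisfies $\plap w=(2K_2)^{p-1}(d+p-2)|x-x^*|^{p-2}>0$, so $-\Mmvpb[w]\approx-\plap w<0$; in fact the inequality fails exactly, for every $r$: pairing $y$ with $-y$ and using that $J_p$ is odd and increasing, $J_p\bigl(2K_2x\cdot y+K_2|y|^2\bigr)+J_p\bigl(-2K_2x\cdot y+K_2|y|^2\bigr)\ge 0$, hence $\Mmvpb[w](x)\ge 0$ and $-\Mmvpb[w](x)\le 0<\|f\|_\infty$ whenever $f\not\equiv 0$. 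Enlarging $K_2$ only makes this worse (the main term scales like $-cK_2^{p-1}$), so your Step 3 estimate $-\Mmvpb[w]\ge c(p,d)K_2^{p-1}-\dots$ is false in sign, and the same problem kills the companion lower barrier $-w$.

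The fix is to use a \emph{concave} barrier: either $w(x)=K_1-K_2|x-x^*|^2$ with $x^*$ chosen so that $\mathrm{dist}(x^*,\Omega_R)\ge 1$, or, as the paper does, $\psi(x)=C-D\,|x-z|^{\frac{p}{p-1}}\frac1d\bigl(\frac{p-1}{p}\bigr)^{p-1}$ with $B_1(z)\cap\Omega_R=\emptyset$, for which $\plap\psi\equiv-D$ exactly; then $-\Mmvpb[\psi]=D+o_r(1)\ge D-\tilde D$ with $\tilde D=\tilde D(R)$, and choosing $D=\tilde D+\|f\\|_\infty$ and then $C$ large gives the supersolution, with $-\psi$ (suitably shifted) as the subsolution. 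The remaining ingredients of your plan — boundedness of $G$ on $\partial\Omega_R$, exactness of the Taylor expansion for a polynomial barrier, nonvanishing gradient on $\overline{\Omega_R}$ because the center lies outside, and uniform control of the consistency error for $r\le R$ — are sound and coincide with the paper's argument once the concavity of the barrier is corrected; the paper's choice of the exponent $p/(p-1)$ merely has the cosmetic advantage that $\plap\psi$ is exactly constant.
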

\begin{proof} Consider the function $h(x)=|x|^{\frac{p}{p-1}}$. Then $h \in C^\infty(\R^d\setminus B_1(0))$ and
\[
\Delta_p h(x)=d\left(\frac{p}{p-1}\right)^{p-1} \quad \textup{for all} \quad x\not=0.
\]
Let $C,D\in \R$ and $z\in \R^d$ to be chosen later and define
\[
\psi(x)=C-D|x-z|^{\frac{p}{p-1}} \frac{1}{d} \left(\frac{p-1}{p}\right)^{p-1} .
\]
Then 
\[
\Delta_p \psi(x) =-D \quad \textup{for all} \quad x\not=z \quad \textup{and} \quad \psi \in C^\infty(\R^d \setminus B_1(z)).
 \]
 Now take $z$ such that
 \[
 B_1(z)\cap \Omega_R = \emptyset.
 \]
 Then $\psi \in C^\infty(\Omega_R)$. By Corollary \ref{cor:consistency}, for all $x\in \Omega$ we have
 \[
- \Mmvpb[\psi](x)=-\Delta_p\psi(x)+ o_r(1)=D+ o_r(1) \geq D - \tilde{D}
 \]
 where $\tilde{D}>0$ depends only on $R$ but not on $r$. Then choose $D=\tilde{D}+\|f\|_{\infty}$ to get
 \[
 -\Mmvpb[\psi](x)\geq D- \tilde{D} = \|f\|_{\infty} \quad \textup{for all} \quad x\in \Omega.
 \]
 Finally, we choose $C$ such that $\psi(x)\geq \|G\|_\infty$ for all $x\in \partial \Omega_R$. Thus
 \[
\begin{cases}
- \Mmvpb[\psi] (x)\geq  \|f\|_{\infty} & x\in \Omega\\
\psi(x)\geq \|G\|_\infty& x\in \partial \Omega_r,
\end{cases}
 \]
 for all $r\leq R$. Then, by comparison (Proposition \ref{prop:comparison})
 \[
 U(x) \leq \psi(x) \leq \|\psi\|_{\infty}.
 \]
 Note that this bound depends on $R$ but not on $r$. A similar argument with $-\psi$ as barrier shows that $U(x) \geq - \|\psi\|_{\infty}$ and thus,
 \[
 \|U\|_\infty \leq \|\psi\|_\infty,
 \]
 which concludes the proof.
\end{proof}

The aim is now to prove the existence of a solution of \eqref{eq:DPP2}.  Before doing that, we need some auxiliary results. Define 
\[
L[\psi,\phi](x):= \frac{1}{D_{d,p}r^p} \fint_{ B_r} J_p(\phi(x+y)-\psi(x) )\dd y.
\]
\begin{lemma}\label{lemma:L}
Let $r>0$
and $\phi\in L^\infty(\Omega_r)$.
\begin{enumerate}[\rm (a)]
\item\label{lemma:L-item1} Then there exists a unique $\psi\in L^\infty(\Omega)$ such that
\[
- L[\psi,\phi](x)=f(x) \quad \textup{for all} \quad x\in \Omega.
\]
\item\label{lemma:L-item2}  Let $\psi_1$ and $\psi_2$ be such that
\[
- L[\psi_1,\phi](x) \leq  f(x) \quad \textup{and} \quad - L[\psi_2,\phi](x)\geq f(x) \quad \textup{for all} \quad x\in \Omega,
\]
then $\psi_1\leq \psi_2$ in $\Omega$.
\end{enumerate}
\end{lemma}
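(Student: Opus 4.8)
The plan is to reformulate both statements as properties of a single scalar function. For fixed $x\in\Omega$, define the map
\[
\Phi_x(s):=-L[s,\phi](x)=-\frac{1}{D_{d,p}r^p}\fint_{B_r}J_p(\phi(x+y)-s)\dd y,
\]
where I abuse notation by writing $L[s,\phi](x)$ for $L[\psi,\phi](x)$ with the constant function $\psi\equiv s$ (note that $L[\psi,\phi](x)$ depends on $\psi$ only through the single value $\psi(x)$, so this is legitimate). Since $t\mapsto J_p(t)$ is continuous and strictly increasing on $\R$ with $J_p(\pm\infty)=\pm\infty$, the integrand $s\mapsto -J_p(\phi(x+y)-s)=J_p(s-\phi(x+y))$ is, for each $y$, continuous and strictly increasing in $s$, tending to $\pm\infty$ as $s\to\pm\infty$. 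Averaging over $y\in B_r$ (using $\phi\in L^\infty$ so the average is finite for every $s$, and dominated convergence for continuity) shows that $\Phi_x$ is continuous, strictly increasing, and surjective onto $\R$.

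For part (\ref{lemma:L-item1}): given the continuous function $f$, for each $x\in\Omega$ there is, by the intermediate value theorem and strict monotonicity, a unique value $\psi(x)\in\R$ with $\Phi_x(\psi(x))=f(x)$, i.e. $-L[\psi,\phi](x)=f(x)$. It remains to check $\psi\in L^\infty(\Omega)$: this follows from a two-sided bound, namely if $|s|\ge \|\phi\|_{L^\infty(\Omega_r)}+\big(D_{d,p}r^p\|f\|_{L^\infty(\Omega)}\big)^{1/(p-1)}$ then $|\Phi_x(s)|>\|f\|_{L^\infty(\Omega)}\ge|f(x)|$, so the root $\psi(x)$ lies in that fixed bounded interval uniformly in $x$; measurability of $\psi$ follows since $\psi(x)=\sup\{s:\Phi_x(s)\le f(x)\}$ and $\Phi_x(s)$ is jointly measurable in $(x,s)$.

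For part (\ref{lemma:L-item2}): fix $x\in\Omega$ and suppose, for contradiction, that $\psi_1(x)>\psi_2(x)$. Because $-L[\psi,\phi](x)$ depends on $\psi$ only through $\psi(x)$ and is strictly increasing in that argument (for each $y$, $J_p(\psi_1(x)-\phi(x+y))>J_p(\psi_2(x)-\phi(x+y))$ by strict monotonicity of $J_p$; averaging over $y$ preserves the strict inequality), we get
\[
f(x)\ge -L[\psi_1,\phi](x)>-L[\psi_2,\phi](x)\ge f(x),
\]
a contradiction. Hence $\psi_1(x)\le\psi_2(x)$ for every $x\in\Omega$.

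I do not anticipate a serious obstacle here: the only point requiring mild care is the $L^\infty$-bound and measurability of the solution $\psi$ in part (\ref{lemma:L-item1}), which is handled by the explicit uniform estimate above together with the fact that $\psi$ is obtained as a pointwise root of a jointly measurable, strictly monotone family; everything else is an immediate consequence of the strict monotonicity and continuity of $J_p$.
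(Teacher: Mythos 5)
Your proof is correct and follows essentially the same route as the paper: strict monotonicity of $J_p$ gives the comparison statement (b), and for each fixed $x$ the intermediate value theorem applied to the continuous, strictly increasing scalar map $a\mapsto -L[a,\phi](x)$, bracketed between $\inf_{\Omega_r}\phi + J_p^{-1}(D_{d,p}r^pf(x))$ and $\sup_{\Omega_r}\phi + J_p^{-1}(D_{d,p}r^pf(x))$, gives existence, uniqueness and the uniform bound. The only difference is minor: the paper goes on to prove that the constructed $\psi$ is actually continuous, whereas you establish boundedness and measurability, which is enough for the $L^\infty(\Omega)$ conclusion as stated.
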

\begin{proof}
We start by proving the comparison principle. This will imply uniqueness. Assume that $\psi_1(x)>\psi_2(x)$ for some $x\in \Omega$. Then
\[
\begin{split}
0&= (-f(x)+f(x))  r^pD_{d,p}\\
& \geq  \fint_{ B_r} J_p(\phi(x+y)-\psi_2(x) ) - J_p(\phi(x+y)-\psi_1(x) )\dd y\\
& >  \fint_{ B_r} J_p(\phi(x+y)-\psi_2(x) ) - J_p(\phi(x+y)-\psi_2(x) )\dd y=0
\end{split}
\]
which is a contradiction. 
To prove existence we start by defining 
$$\psi_I(x)= \sup_{\Omega_r} \phi + J_p^{-1}\left( D_{d,p}r^pf(x)\right). $$
Since 
$$ \sup_{\Omega_r} \phi - \phi(x+y) \geq 0,$$
we have
\[\begin{split}
 -  L[\psi_I,\phi](x)&=- \frac{1}{D_{d,p}r^p}\fint_{B_r} J_p\left(\phi(x+y)-\sup_{\Omega_r} \phi -   J_p^{-1}(D_{d,p}r^pf(x))\right)\dd y\\
&\geq  \frac{1}{D_{d,p}r^p}\fint_{B_r} J_p\left(J_p^{-1}\left( D_{d,p}r^pf(x)\right)\right)\dd y=f(x).
\end{split}
\]
By defining 
$$\psi_I(x)=\inf_{\Omega_r} \phi + J_p^{-1}\left(  D_{d,p}r^pf(x)\right),$$
we may prove that $- L[\psi_I,\phi](x)\leq  f(x)$ in a similar manner. By continuity we can conclude that for every $x\in \Omega$, there exists a value 
$$a_{x}\in \left [\inf_{\Omega_r} \phi + J_p^{-1}\left( D_{d,p}r^pf(x)\right), \sup_{\Omega_r} \phi + J_p^{-1}\left( D_{d,p}r^pf(x)\right)\right ],$$ such that
\begin{equation}
\label{eq:ax}
-\frac{1}{D_{d,p}r^p} \fint_{B_r} J_p(\phi(x+y)-a_x)\dd y=f(x).
\end{equation}
Observe that since $J_p$ is strictly increasing, this value is unique. We may then define $\psi(x):=a_x$ for all $x\in \Omega$. Clearly, 
$$- L[\psi,\phi](x)=f(x)$$
 for all $x\in \Omega$, so the  existence is proved.

We now claim that the constructed function is continuous. It is clearly bounded so it is sufficient to prove continuity along convergent subsequences. Take $x_j\to x$ such that $a_{x_j}\to b$. By passing to the limit in the definition of $a_{x_j}$ we obtain
\[
-\frac{1}{D_{d,p}r^p} \fint_{B_r} J_p(\phi(x+y)-b)\dd y=f(x).
\]
By uniqueness of the values $a_x$ satisfying \eqref{eq:ax}, we must have $b=a_x$. Therefore, $\psi(x)=a_x$ is a continuous function.
\end{proof}

We are now ready to prove the existence.

\begin{proposition}\label{prop:existenceDPPf0}
There exists a solution $U\in L^\infty(\R^d)$ of \eqref{eq:DPP2}.
\end{proposition}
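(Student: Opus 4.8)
The plan is to obtain the solution as the pointwise limit of a monotone iteration built on Lemma~\ref{lemma:L}. With the operator $L$ introduced just before that lemma one has $L[\phi,\phi]=\Mmvpb[\phi]$, so a bounded $U$ on $\Omega_r$ solves \eqref{eq:DPP2} if and only if $U=G$ on $\partial\Omega_r$ and $-L[U,U]=f$ in $\Omega$. For bounded $V$ on $\Omega_r$, let $TV$ be the unique bounded solution on $\Omega$ of $-L[TV,V]=f$ given by the existence part of Lemma~\ref{lemma:L}, extended by $G$ on $\partial\Omega_r$; then a fixed point of $T$ is precisely a solution of \eqref{eq:DPP2}. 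The operator $T$ is order preserving: if $V_1\le V_2$, then since $\phi\mapsto L[\psi,\phi]$ is non-decreasing we have $-L[TV_2,V_1]\ge -L[TV_2,V_2]=f=-L[TV_1,V_1]$, and comparing these two inequalities at the common second argument $V_1$ via the comparison part of Lemma~\ref{lemma:L} gives $TV_1\le TV_2$ on $\Omega$; on $\partial\Omega_r$ both equal $G$.

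Next I would bracket the iteration. Take the barrier $\psi$ from the proof of Proposition~\ref{Prop:Stability} with $R=r$, $D=\widetilde{D}+\|f\|_\infty$, and $C$ large enough that $\psi\ge 0$ on $\Omega_r$ and $\psi\ge \|G\|_\infty$ on $\partial\Omega_r$; by Corollary~\ref{cor:consistency}, $-\Mmvpb[\psi]\ge \|f\|_\infty$ in $\Omega$. Put $\overline{U}=\psi$ and $\underline{U}=-\psi$. Since $J_p$ is odd, $-\Mmvpb[-\psi]=\Mmvpb[\psi]\le -\|f\|_\infty\le f$, so $\overline{U}$ is a supersolution and $\underline{U}$ a subsolution of \eqref{eq:DPP2}, with $\underline{U}\le 0\le \overline{U}$ on $\Omega_r$. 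Using $-L[\underline{U},\underline{U}]=-\Mmvpb[\underline{U}]\le f=-L[T\underline{U},\underline{U}]$ and, once more, the comparison part of Lemma~\ref{lemma:L} at the common second argument $\underline{U}$, one gets $\underline{U}\le T\underline{U}$ on $\Omega$ (and trivially on $\partial\Omega_r$); symmetrically $T\overline{U}\le \overline{U}$. Together with the monotonicity of $T$, the sequence $U_k:=T^k\underline{U}$ is non-decreasing and, by induction, satisfies $\underline{U}\le U_k\le \overline{U}$ for every $k$.

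Finally I would pass to the limit. Being bounded and non-decreasing, $U_k$ converges pointwise on $\Omega_r$ to a bounded measurable $U$, with $U=G$ on $\partial\Omega_r$. Fixing $x\in\Omega$ and passing to the limit in $-L[U_{k+1},U_k](x)=f(x)$: the integrand $J_p(U_k(x+y)-U_{k+1}(x))$ converges pointwise for $y\in B_r$ and is dominated by a constant, so dominated convergence yields $-L[U,U](x)=f(x)$, i.e.\ $-\Mmvpb[U](x)=f(x)$; hence $U$ solves \eqref{eq:DPP2}, and extending it by any bounded function off $\Omega_r$ (where it enters neither equation) gives $U\in L^\infty(\R^d)$. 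There is no deep obstacle: the argument is an assembly of Lemma~\ref{lemma:L} and the barrier from the proof of Proposition~\ref{Prop:Stability}. The points that need care are keeping track of the opposite monotonicities of $L[\psi,\phi]$ in its two slots, and therefore always invoking the comparison part of Lemma~\ref{lemma:L} at a common second argument; verifying that the sub/supersolution bracket is preserved by $T$; and the (elementary) domination in the last limit.
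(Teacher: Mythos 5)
Your argument is correct and follows essentially the same route as the paper: a monotone iteration built on Lemma~\ref{lemma:L}, bracketed by the barrier of Proposition~\ref{Prop:Stability}, with a passage to the limit in $-L[U_{k+1},U_k]=f$. The only differences are cosmetic — you package the iteration as an order-preserving map $T$ started from the subsolution $-\psi$ (the paper starts from $\inf_{\partial\Omega_r}G-h$ and checks the first step by a direct contradiction computation) and you invoke dominated rather than monotone convergence at the end.
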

\begin{proof}
Consider $h$ to be the barrier function constructed in Proposition \ref{Prop:Stability} (denoted there by $\psi$), i.e., $h$ is such that
\[
 -\Mmvpb[h] (x)\geq  \|f\|_{\infty}\geq f(x) \quad \textup{if} \quad  x\in \Omega  \quad \textup{and} \quad h(x)\geq \|G\|_\infty\geq 0  \quad \textup{if} \quad  x\in \Omega_r.
\]
Define
\[
U_0(x)=
\begin{cases}
\displaystyle\inf_{z\in \partial \Omega_r} G(z) -h(x) &x\in \Omega,\\
G(x) &x\in \partial\Omega_r.
\end{cases}
\]
Note that if $x\in \partial \Omega_r$ then \[
U_0(x)=G(x)\geq \inf_{z\in \partial \Omega_r} G(z) \geq \inf_{z\in \partial \Omega_r} G(z) -h(x). 
\]
Thus $U_0(x)\geq \inf_{z\in \partial \Omega_r} G(z) -h(x)$ in $\Omega_r$.

We define the sequence $U_k$ as the sequence of solutions of 
\[
\begin{cases}
 -  L[U_k,U_{k-1}](x)= f(x)& x\in \Omega,\\
U_k(x)= G(x)& x\in \partial \Omega_r.
\end{cases}
\]
As long as $U_{k-1}$ is bounded, $U_k$ exists by Lemma \ref{lemma:L}\eqref{lemma:L-item1}. We now prove that $U_{k+1}(x)\geq U_{k}(x)$  in $\Omega_r$ by induction. We start by proving that \[
U_1(x) \geq  \inf_{z\in \partial \Omega_r} G(z)-h(x)= U_0(x).
\]
Assume towards a contradiction that 
\[
U_1(x) <  \inf_{z\in \partial \Omega_r} G(z)-h(x)= U_0(x)
\]
for some $x\in \Omega$. Clearly $U_0(x)=U_1(x)$ if $x\in \partial \Omega_r$. So we must have $x\in \Omega$. By the monotonicity of $J_p$ 
\[
\begin{split}
- f(x)&=   L[U_1,U_0](x)\\
&= \frac{1}{D_{d,p}r^p}\fint_{ B_r} J_p( U_0(x+y)-U_1(x) )\dd y \\
& \geq   \frac{1}{D_{d,p}r^p} \fint_{ B_r} J_p((\inf_{z\in \partial \Omega_r} G(z)-h(x+y))-U_1(x) )\dd y\\
&= \frac{1}{D_{d,p}r^p} \fint_{ B_r} J_p((\inf_{z\in \partial \Omega_r} G(z)-h(x+y)) -U_1(x) )\dd y\\
&\quad   + \frac{1}{D_{d,p}r^p} \fint_{ B_r}J_p(h(x+y)-h(x))\dd y \\
&\quad - \frac{1}{D_{d,p}r^p}  \fint_{ B_r}J_p(h(x+y)-h(x))\dd y \\
&> -\frac{1}{D_{d,p}r^p} \fint_{ B_r}J_p(h(x+y)-h(x))\dd y\\
&=  -\Mmvpb[h](x) \\
&\geq   \|f\|_\infty.
\end{split}
\]
Thus, $-f(x)>\|f\|_\infty$, which is clearly a contradiction. We conclude that
\[
U_1(x) \geq  \inf_{z\in \partial \Omega_r} G(z)-h(x)= U_0(x).
\]
Now assume that $U_k\geq  U_{k-1}$. Then
\[
\begin{split}
- f(x)&= L[U_{k+1}, U_{k}](x)\\
&= \frac{1}{D_{d,p}r^p} \fint_{ B_r} J_p( U_{k}(x+y)-U_{k+1}(x))\dd y \\
&\geq  \frac{1}{D_{d,p}r^p}  \fint_{ B_r} J_p( U_{k-1}(x+y)-U_{k+1}(x))\dd y \\
&= L[U_{k+1}, U_{k-1}](x).
\end{split}
\]
This implies
\[
\begin{cases}
- L[U_{k+1},U_{k-1}](x)\geq f(x)& x\in \Omega,\\
U_{k+1}(x)= G(x)& x\in \partial \Omega_r,
\end{cases}
\]
and
\[
\begin{cases}
- L[U_{k},U_{k-1}]= f(x)& x\in \Omega,\\
U_k= G(x)& x\in \partial \Omega_r.
\end{cases}
\]
By comparison (Lemma \ref{lemma:L}\eqref{lemma:L-item2}), $U_{k+1}\geq U_{k}$. Thus the induction is complete and the claim is proved.

We will now verify that $U_k$ is uniformly bounded from above by $\|G\|_\infty$. We argue that $U_0(x) \leq h(x) $ as follows. If $x\in \partial \Omega_r$, then
\[
U_0(x)= G(x) \leq\|G\|_\infty\leq h(x).
\]
If instead $x\in \Omega$, then
\[
U_0(x)=\inf_{z\in \partial \Omega_r} G(z) -h(x)\leq \inf_{z\in \partial \Omega_r} G(z) \leq \|G\|_\infty \leq h(x).
\]
Assume now that $U_k(x)\leq h(x)$. If $x\in \partial \Omega_r$, then
\[
 U_{k+1}(x)= G(x) \leq\|G\|_\infty\leq h(x).
\]
On the other hand, if $x\in \Omega$, then
\[
- f(x)=L[U_{k+1},U_k](x)\leq L[U_{k+1},h](x).
\]
In particular,
\[
-L[h,h](x) \geq f(x) \quad \textup{and}\quad    -L[U_{k+1},h](x) \leq f(x)
\]
which by comparison (Lemma \ref{lemma:L}\eqref{lemma:L-item2}) implies that $U_{k+1}\leq h$ and thus proves the claim.

We conclude that for every $x\in \Omega_r$, the sequence $U_k(x)$ is non-decreasing and bounded from above. We can then define the limit
\[
U(x):=\lim_{k\to \infty} U_k(x).
\]
By the monotone convergence theorem
\[
\begin{split}
-f(x)&=\lim_{k\to \infty} L[U_{k+1}, U_k](x)=  L[\lim_{k\to \infty} U_{k+1}, \lim_{k\to \infty} U_k](x)=L[U, U](x)= \Mmvpb[U](x)
\end{split}
\]
so that $U$ is a solution of \eqref{eq:DPP2}.

\end{proof}

\subsection{Convergence: The proof of Theorem \ref{thm:dpp} ii)}

The proof of the convergence is based on the numerical analysis technique introduced by Barles and Souganidis in \cite{BS91}. We partially follow the outline of \cite{dTMP18}, where this technique was adapted to homogeneous problems involving the $p$-Laplacian.

\subsubsection{The strong uniqueness property for the boundary value problem}

Our approximate problem \eqref{eq:DPP2} will produce a sequence of solutions that converges to a so-called generalized viscosity solution (see below). To complete our program we need to ensure that this solution is unique and coincides with the usual viscosity solution.
\begin{definition}[Generalized viscosity solutions of the boundary value problem] \label{def:genvisc}
Let $f$ be a continuous function in $\overline{\Omega}$ and $g$ a continuous function in $\partial \Omega$. We say that a lower (resp. upper) semicontinuous function $u$ in $\overline{\Omega}$  is a \emph{generalized viscosity supersolution} (resp. \emph{subsolution}) of \eqref{eq:BVP} in $\overline{\Omega}$ if whenever $x_0 \in \overline{\Omega}$ and $\varphi \in
  C^2( B_R(x_0))$ for some $R>0$ are such that $|\nabla \varphi(x)|\neq 0$ for $x\in B_R(x_0)\setminus\{x_0\}$, 
$$ \varphi(x_0) = u(x_0) \quad \text{and} \quad \varphi(x) \leq
u(x) \ \text{(resp. $\varphi(x)\geq u(x)$)}\quad \text{for all} \quad
x \in  B_R(x_0)\cap\overline{\Omega},$$
then we have
\begin{equation*}
\begin{split}
\lim_{\rho\to 0}\sup_{B_{\rho(x_0)}\setminus\{x_0\}}\left(-\plap\varphi (x)-f(x_0)\right)&\geq 0 \quad \textup{if} \quad x_0\in \Omega\\
 \text{(resp. } \lim_{\rho\to 0}\inf_{B_{\rho(x_0)}\setminus\{x_0\}}\left( -\plap\varphi (x)-f(x_0)\right)& \leq 0\text{)}\\
\max\left\{\lim_{\rho\to 0}\sup_{B_{\rho(x_0)}\setminus\{x_0\}}\left(-\plap\varphi (x)-f(x_0)\right), u(x_0)-g(x_0)\right\}&\geq0 \quad \textup{if} \quad x_0\in \partial\Omega\\
\Big(\text{resp. } \min\left\{\lim_{\rho\to 0}\inf_{B_{\rho(x_0)}\setminus\{x_0\}}\left( -\plap\varphi (x)-f(x_0)\right), u(x_0)-g(x_0)\right\}&\leq0\Big)
\end{split}
\end{equation*}
\end{definition} 

We need the following uniqueness results for the generalized concept of viscosity solutions.

\begin{theorem}[Strong uniqueness property]\label{thm:SUP}
Let $\Omega$ be a $C^2$ domain. If $\underline{u}$ and $\overline{u}$ are generalized viscosity subsolutions and supersolutions of \eqref{eq:BVP} respectively, then $\underline{u}\leq \overline{u}$.
\end{theorem}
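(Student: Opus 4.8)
The plan is to prove the strong uniqueness property (Theorem \ref{thm:SUP}) via the classical doubling-of-variables argument from the theory of viscosity solutions, adapted to the generalized boundary condition and to the degenerate/singular nature of $\plap$ in the range $p\in(1,2)$. Suppose for contradiction that $\sup_{\overline{\Omega}}(\underline{u}-\overline{u})=:M>0$. Since $\underline{u}$ is upper semicontinuous, $\overline{u}$ lower semicontinuous and $\overline{\Omega}$ compact, the supremum is attained at some $\hat x\in\overline{\Omega}$.

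\textbf{The doubling argument.} For $j>0$ consider $\Phi_j(x,y)=\underline{u}(x)-\overline{u}(y)-\frac{j}{q}|x-y|^q$, where $q$ is chosen large enough (e.g. $q>p/(p-1)$, as in \cite{dTMP18}) to guarantee that at a maximum point $(x_j,y_j)$ the test functions $x\mapsto \overline{u}(y_j)+\frac{j}{q}|x-y_j|^q$ and $y\mapsto \underline{u}(x_j)-\frac{j}{q}|x_j-y|^q$ have non-vanishing gradient away from the diagonal, so that the modified viscosity conditions \eqref{eq:testfuncdef} reduce to the standard pointwise ones away from $x_j=y_j$. By the standard lemma on penalization, along a subsequence $(x_j,y_j)\to(\hat x,\hat x)$, $j|x_j-y_j|^q\to 0$, and $\underline{u}(x_j)-\overline{u}(y_j)\to M$. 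The key case split is whether $\hat x\in\Omega$ or $\hat x\in\partial\Omega$.

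\textbf{Interior case.} If $\hat x\in\Omega$, then for large $j$ both $x_j,y_j\in\Omega$. If $x_j=y_j$ one must use the ``$\lim\sup$ over small balls'' formulation directly; if $x_j\neq y_j$ one applies the interior generalized super/subsolution inequalities with the quadratic-type test functions, uses the theorem on sums (Crandall--Ishii) to produce symmetric matrices $X\geq Y$ with the correct gradient, and estimates $-\plap$ of the two test functions. The ellipticity of $\plap$ then yields $-f(\hat x)\leq -f(\hat x)$ only as an equality in the limit, but the strict positivity gap $M>0$ has to be closed by noting that the penalization forces the inequality $0 \le 0$ to be strict somewhere — more precisely, as is standard, one obtains $f(x_j)-f(y_j)\geq 0$ in the limit and no contradiction directly; the contradiction with $M>0$ instead comes from combining this with the boundary analysis, or (in the pure interior contact case) from an additional strict perturbation $\Phi_j - \e|x-\hat x|^2$ forcing $x_j$ into the interior and closing the gap. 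I would follow \cite{dTMP18} closely here.

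\textbf{Boundary case.} If $\hat x\in\partial\Omega$, the $C^2$ regularity of $\partial\Omega$ is used: one builds, near $\hat x$, a smooth defining function and adds a term like $\theta\,d(x)$ (with $d$ the signed distance) to the doubling functional to push the contact point either strictly inside $\Omega$ (reducing to the interior case) or onto $\partial\Omega$ in a controlled way. In the latter situation the generalized boundary inequalities give $\underline{u}(\hat x)-g(\hat x)\leq 0\leq \overline{u}(\hat x)-g(\hat x)$ unless the PDE part already gives a contradiction; either way $\underline{u}(\hat x)\leq g(\hat x)\leq \overline{u}(\hat x)$, contradicting $\underline{u}(\hat x)-\overline{u}(\hat x)=M>0$. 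The main obstacle is precisely this boundary analysis in the singular range $p\in(1,2)$: one must ensure the perturbed test functions keep non-vanishing gradient so that \eqref{eq:testfuncdef} can be replaced by \eqref{eq:testfuncdef2} (cf. Remark \ref{rem:equivviscform}), and one must handle the interplay between the $C^2$ barrier near $\partial\Omega$ and the degeneracy of $|\nabla\varphi|^{p-2}$; this is the step where the $C^2$-domain hypothesis is essential and where the argument of \cite{BS91,dTMP18} must be carried out with care.
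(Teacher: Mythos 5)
Your route is genuinely different from the paper's, and it has a real gap at its core. The paper does not reprove comparison by doubling variables at all: its proof of Theorem \ref{thm:SUP} is a reduction. Proposition \ref{prop:equivnotions} shows (via the boundary barrier $\varphi_\veps(y)=\frac{|y-x_0|^4}{\veps^4}+\frac{d(y)}{\veps^2}-\frac{d(y)^2}{2\veps^2}$, borrowed from \cite{dTMP18}, which is where the $C^2$-domain hypothesis enters) that generalized viscosity sub/supersolutions of \eqref{eq:BVP} coincide with the standard ones, and then the comparison principle for standard viscosity solutions of $-\plap u=f$ is simply quoted (Theorem 2.7 in \cite{equiv}, where it is obtained through the equivalence with weak solutions rather than by a doubling argument). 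Your boundary analysis is in the right spirit --- it is essentially the content of Proposition \ref{prop:equivnotions} --- but the interior part of your argument is exactly the step the paper deliberately avoids, and you do not close it.

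Concretely: since $-\plap$ contains no zeroth-order term, it is not strictly monotone in $u$, so the doubling argument only produces, in the limit, an inequality of the type $f(x_j)-f(y_j)\geq 0$, which carries no contradiction with $M>0$; you acknowledge this yourself. The standard fix is to replace $\underline{u}$ (or $\overline{u}$) by a \emph{strict} sub/supersolution, and constructing such a perturbation for $-\plap u=f$ with a general continuous $f$ (possibly vanishing or changing sign) is precisely the hard point of viscosity comparison for the $p$-Laplacian; the perturbation $\Phi_j-\e|x-\hat x|^2$ you propose modifies the test function but does not create strictness in the equation, so the gap remains open. In addition, invoking the Crandall--Ishii theorem of sums is not innocuous here: the paper works with the restricted notion of test function of Definition \ref{pvisc} (non-vanishing gradient off the touching point, with the $\limsup/\liminf$ over punctured balls), and you would have to justify that the semijet machinery is compatible with it, in particular at points where $x_j=y_j$ and the penalization has vanishing gradient, which is exactly the singular regime $p\in(1,2)$ that the definition is designed to sidestep. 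As written, the proposal would need either a genuine strict-perturbation construction or the paper's reduction-plus-citation strategy to become a proof.
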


The above result for standard viscosity solutions is well known (see Theorem 2.7 in \cite{equiv}). The proof of Theorem \ref{thm:SUP} follows from this fact together with the following equivalence result between the two notions of viscosity solutions. 

\begin{proposition}\label{prop:equivnotions}
Let $\Omega$ be a $C^2$ domain. Then $u$ is a viscosity subsolution (resp. supersolution) of \eqref{eq:BVP} if and only if $u$ is a generalized viscosity subsolution (resp. supersolution) of \eqref{eq:BVP}.
\end{proposition}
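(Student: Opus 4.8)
The plan is to prove the two implications separately and only for subsolutions, the supersolution case being entirely symmetric. The implication ``viscosity $\Rightarrow$ generalized'' is immediate: at an interior point $x_0\in\Omega$ one shrinks $R$ so that $B_R(x_0)\subset\Omega$, which makes the test-function requirements of the two definitions literally the same, and at $x_0\in\partial\Omega$ the inequality $u(x_0)\le g(x_0)$ already forces $\min\{\,\cdot\,,\,u(x_0)-g(x_0)\}\le 0$ regardless of the test function. For the converse, the interior part of Definition \ref{def:genvisc} gives at once (again by shrinking $R$) that $u$ is a viscosity subsolution of $-\plap u=f$ in $\Omega$, so the whole content is to show $u\le g$ on $\partial\Omega$. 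I would argue by contradiction: suppose $u(x_0)>g(x_0)$ at some $x_0\in\partial\Omega$, set $c=u(x_0)-g(x_0)>0$ and $\mu=\tfrac12\bigl(u(x_0)+g(x_0)\bigr)$, and construct a barrier that, through the generalized subsolution property, forces the contradictory inequality $u(x_0)\le\mu<u(x_0)$.

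The barrier is where the $C^2$ regularity of $\Omega$ enters. Fix an exterior tangent ball $B_{\rho_0}(y_0)$ at $x_0$ (which exists since $\partial\Omega$ is $C^2$), so that $y_0\notin\overline\Omega$ and $|x-y_0|\ge\rho_0$ for $x\in\overline\Omega$, with equality only at $x_0$. For $R<\rho_0$ and parameters $\nu,\Lambda>0$, consider the radial function
\begin{equation*}
w(x)=\mu+\Lambda\bigl(1-e^{-\nu(|x-y_0|-\rho_0)}\bigr),\qquad x\in B_R(x_0).
\end{equation*}
Then $w\in C^2(B_R(x_0))$ with $\nabla w\neq0$ everywhere, $w(x_0)=\mu$, and $w\ge\mu$ on $\overline\Omega\cap B_R(x_0)$. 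A direct computation of $\plap$ on radial functions shows that, after fixing $\nu$ large enough (depending only on $d,p,\rho_0,R$) and then $\Lambda$ large enough, one has $-\plap w\ge\|f\|_{L^\infty(\overline\Omega)}+1$ on $B_R(x_0)$; moreover, since $\partial B_R(x_0)\cap\overline\Omega$ is compact and avoids $x_0$, hence keeps distance $>\rho_0$ from $y_0$, one also gets $w\ge A:=\sup_{\overline{B_R(x_0)}\cap\overline\Omega}u<\infty$ there for $\Lambda$ large. Beforehand one shrinks $R$ so that in addition $|g-g(x_0)|<c/2$ on $\partial\Omega\cap\overline{B_R(x_0)}$.

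With the barrier fixed, set $M=\max_{\overline{B_R(x_0)}\cap\overline\Omega}(u-w)$, attained at some $\bar x$ because $u-w$ is upper semicontinuous, and note $M\ge(u-w)(x_0)=c/2>0$. Testing $u$ at $\bar x$ with the admissible test function $\varphi=w+(u-w)(\bar x)$ (it touches $u$ from above at $\bar x$ and has nonvanishing gradient) and using $-\plap\varphi(x)-f(\bar x)=-\plap w(x)-f(\bar x)\ge 1>0$ for every $x\in B_R(x_0)$, the generalized subsolution property is violated in every case: if $\bar x\in\Omega$ by the interior condition; if $\bar x=x_0$ by the boundary condition, since also $u(x_0)-g(x_0)=c>0$; if $\bar x\in\partial\Omega\setminus\{x_0\}$ the boundary condition forces $u(\bar x)\le g(\bar x)$, whence $M=(u-w)(\bar x)\le g(\bar x)-w(\bar x)<\bigl(g(x_0)+\tfrac c2\bigr)-\mu=0$, a contradiction; and $\bar x\in\partial B_R(x_0)\cap\overline\Omega$ is excluded directly by $w\ge A\ge u$ there. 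Hence $M\le 0$, i.e. $u(x_0)\le w(x_0)=\mu<u(x_0)$, the desired contradiction; so $u\le g$ on $\partial\Omega$. The main obstacle is precisely this barrier: at every boundary point of the $C^2$ domain one needs a single $C^2$ function with nonvanishing gradient that is simultaneously a strict classical supersolution of $-\plap u=f$, takes a value strictly between $g(x_0)$ and $u(x_0)$ at $x_0$, and dominates $u$ on the lateral sphere. The exterior tangent ball coming from $C^2$ regularity is exactly what makes this possible, and the $(p-1)$-homogeneity of $\plap$ lets the two parameters absorb both the size of $f$ and the domination requirement; the only delicate points are the order in which $R$, $\nu$, $\Lambda$ are chosen and the routine but slightly lengthy sign and size estimate for $\plap$ of the radial barrier.
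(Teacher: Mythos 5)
Your proof is correct, but it takes a genuinely different route from the paper. The paper reduces everything, as you do, to showing $u\le g$ on $\partial\Omega$ (the interior part and the easy implication being immediate), but it then imports a ready-made boundary test function from \cite{dTMP18}, namely $\varphi_\varepsilon(y)=|y-x_0|^4/\varepsilon^4+d(y)/\varepsilon^2-d(y)^2/(2\varepsilon^2)$ built from the distance function $d(y)=\mathrm{dist}(y,\partial\Omega)$ (this is where the $C^2$ regularity of $\Omega$ is used, to make $d$ of class $C^2$ near the boundary), shows via the estimate $\plap\varphi_\varepsilon(y_\varepsilon)\le C_1\varepsilon^{2(2-p)}(C_2/\varepsilon^2-C_3/\varepsilon^3)$ that $-\plap\varphi_\varepsilon-f>0$ at the touching points $y_\varepsilon$, concludes that $y_\varepsilon\in\partial\Omega$, hence $u(y_\varepsilon)\le g(y_\varepsilon)$, and finally lets $\varepsilon\to0$ using $u(x_0)\le u(y_\varepsilon)$, $y_\varepsilon\to x_0$ and the continuity of $g$. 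You instead argue by contradiction at a fixed $x_0\in\partial\Omega$ with a classical exponential barrier based on an exterior tangent ball, tuning $\nu$ and then $\Lambda$ (using the $(p-1)$-homogeneity of $\plap$) so that the barrier is a strict classical supersolution beating $\|f\|_\infty$ and dominates $u$ on the lateral sphere, and then locate the maximum of $u-w$ and exclude every possible position of it. What each approach buys: the paper's argument is shorter because the delicate computations are outsourced to \cite{dTMP18} and it needs no separate domination on a lateral boundary; your argument is self-contained and in fact only uses the exterior ball condition, so it would work for $C^{1,1}$ domains, slightly more general than stated. Two details you should patch: the radial computation of $\plap w$, which you only assert (it does work: $-\plap w=(\Lambda\nu)^{p-1}e^{-(p-1)\nu(t-\rho_0)}\bigl((p-1)\nu-\tfrac{d-1}{t}\bigr)$ with $t=|x-y_0|\in[\rho_0-R,\rho_0+R]$, so first $\nu>\tfrac{d-1}{(p-1)(\rho_0-R)}$ and then $\Lambda$ large suffices); and the claim that $|x-y_0|=\rho_0$ on $\overline\Omega$ only at $x_0$, which is not automatic for an arbitrary exterior tangent ball (think of an annulus, where the inner component of the boundary lies entirely on the sphere of a tangent ball) but is restored by the standard trick of replacing the exterior ball by a smaller ball tangent at $x_0$ contained in it, whose sphere meets the original sphere only at $x_0$. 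With these routine fixes your argument is complete.
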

\begin{proof} We prove the statement for subsolutions. Clearly if $u$ is a viscosity subsolution, then it is also a generalized viscosity subsolution since
\[
\min\{-\plap \varphi (x)-f(x_0), u(x_0)-g(x_0)\}\leq u(x_0)-g(x_0)\leq0.
\]
The proof of the other implication is essentially contained in \cite{dTMP18}. We spell out the details below.

Assume $u$ is a generalized viscosity subsolution. Fix a point $x_0\in \partial \Omega$ and define, for $\veps>0$ small enough, the following function
\[
\varphi_\veps(y)=\frac{|y-x_0|^4}{\veps^4}+\frac{d(y)}{\veps^2}-\frac{d(y)^2}{2\veps^2}
\]
where $d(y):=\textup{dist}(y,\partial \Omega)$. As it is shown in the proof of Theorem 3.4 in \cite{dTMP18}, this is a suitable test function at some point $y_\veps\in  \overline{\Omega}$ as in Definition \ref{def:genvisc}. Moreover, $u(x_0)\leq u(y_\veps)$ for all $\veps>0$ small enough and
\[
y_\veps \to x_0 \quad \textup{as} \quad \veps\to0.
\]

It is standard to check, as done in step three of the proof of Theorem 3.4 in \cite{dTMP18}, that
\[
\nabla \varphi_\veps (y_\veps)\not=0 \quad \textup{for all} \quad \veps>0.
\]
which allows us to use the standard condition \eqref{eq:testfuncdef2} rather than  \eqref{eq:testfuncdef}.

By direct computations, it is also shown in step four of the proof of Theorem 3.4 in \cite{dTMP18} that 
\[
\plap \varphi_\veps(y_\veps)  \leq C_1 \veps^{2(2-p)}\left(\frac{C_2}{\veps^2}-\frac{C_3}{\veps^3}\right)
\]
for constants $C_1,C_2,C_3>0$. From here, it is standard to get that there exists a constant $C>0$ and $\veps_0>0$ such that for all $\veps<\veps_0$
\[
\plap \varphi_\veps(y_\veps) <-C \frac{1}{\veps^{2p-1}}<-\|f\|_\infty\leq-  f(y_\veps).
\]
Thus,
\[
-\plap \varphi_\veps(y_\veps)- f(y_\veps)>0.
\]
This implies that $y_\veps\in \partial \Omega$. Indeed, if $y_\veps\in \Omega$ then by definition of generalized viscosity subsolution we have $-\plap \varphi_\veps(y_\veps)- f(y_\veps)\leq0$. Since $y_\veps\in \partial \Omega$, then we have by definition that 
\[
\min\{-\plap \varphi (y_\veps)-f(y_\veps), u(y_\veps)-g(y_\veps)\}\leq0
\]
which implies that $u(y_\veps)-g(y_\veps)\leq0$. Finally, using the fact that $u(x_0)\leq u(y_\veps) $ and taking the limit as
$\veps\to0$, we obtain $u(x_0)-g(x_0)\leq0$, since $g$ is continuous.  This shows $u$ is a viscosity subsolution. 
\end{proof}
Note that the restriction of having a $C^2$ domain in the proposition above comes from the fact that we need the distance function to be $C^2$ close to the boundary.
\subsubsection{Monotonicity and consistency of the approximation}
For convenience we define
\[
S(r,x,\phi(x),\phi):=\begin{cases}
\displaystyle - \frac{1}{D_{d,p}r^p} \fint_{ B_r} J_p(\phi(x+y)-\phi(x))\dd y - f(x)  &x\in \Omega,\\
\phi(x)-G(x) &x\in \partial\Omega_r.
\end{cases}
\]
Note that \eqref{eq:DPP2} can then be equivalently formulated as
\[
S(r,x,U_r(x),U_r)=0 \quad x\in \Omega_r.
\]
We have the following properties for $S$:
\begin{lemma}\label{lem:propscheme}
\begin{enumerate}[\rm(a)]
\item\label{lem:propscheme-item-a} \textup{(Monotonicity)} Let $t\in\R$ and $\psi\geq\phi$. Then
\[
S(r,x,t,\psi)\leq S(r,x,t,\phi)
\]
\item\label{lem:propscheme-item-b}  \textup{(Consistency)}  For all $x\in \overline{\Omega}$ and  $\phi\in C^2( B_R(x))$ for some $R>0$  such that $|\nabla \phi(x)|\neq 0$ we have that
\[
\limsup_{r\to0,\ z\to x,\ \xi \to 0} S(r, z, \phi(z)+\xi+\eta_{r}, \phi+\xi)\leq \begin{cases}
\displaystyle-\Delta_p\phi(x)-f(x)& \text{if } x\in\Omega\\
\max \left\{\displaystyle-\Delta_p \phi(x)-f(x), \phi(x)-g(x)\right\}& \text{if } x\in \partial{\Omega},
\end{cases}
\]
\noindent and 
\[
\liminf_{r\to0,\ z\to x,\ \xi \to 0} S(r, z, \phi(z)+\xi-\eta_{r}, \phi+\xi)\geq
\begin{cases}
\displaystyle-\Delta_p\phi(x)-f(x)& \text{if } x\in\Omega\\
\min\left\{\displaystyle-\Delta_p\phi(x)-f(x), \phi(x)-g(x)\right\}& \text{if } x\in \partial{\Omega},
\end{cases}
\]
where $\eta_{r}\ge 0,\ \eta_{r}/r^{p}\to 0$ as $r\to 0$.
\end{enumerate}
\end{lemma}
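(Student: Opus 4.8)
The plan is to verify the two properties separately, treating the interior and boundary cases of $S$ in each.

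For part \eqref{lem:propscheme-item-a} (monotonicity), I would simply unwind the definition of $S$. If $x\in\partial\Omega_r$, then $S(r,x,t,\psi)=t-G(x)=S(r,x,t,\phi)$, so the inequality is trivially an equality. If $x\in\Omega$, then the dependence on the last argument enters only through the integral term $-\frac{1}{D_{d,p}r^p}\fint_{B_r}J_p(\phi(x+y)-t)\dd y$. Since $J_p(s)=|s|^{p-2}s$ is monotonically increasing, $\psi\geq\phi$ implies $J_p(\psi(x+y)-t)\geq J_p(\phi(x+y)-t)$ pointwise, hence the integral increases, hence (because of the minus sign) $S(r,x,t,\psi)\leq S(r,x,t,\phi)$. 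That is all there is to part (a).

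For part \eqref{lem:propscheme-item-b} (consistency), first observe that the additive constant $\xi$ cancels in the difference $\phi(x+y)+\xi-(\phi(z)+\xi\pm\eta_r)$, so up to the small perturbations $z\to x$ and $\pm\eta_r$ we are looking at $S$ evaluated on the single function $\phi$ at the point $z$, shifted by $\mp\eta_r$. For the interior case $x\in\Omega$: for $r$ and the parameters small enough $z$ lies in $\Omega$, and the integrand in $S$ becomes $J_p\big(\phi(z+y)-\phi(z)\pm\eta_r\big)$. Since $|\nabla\phi(x)|\neq0$, Theorem \ref{thm:MVF} (equivalently Corollary \ref{cor:consistency}) applies at $x$, and by continuity of $\phi,\nabla\phi,D^2\phi$ and of $\Delta_p\phi$ near $x$ one gets, uniformly for $z$ near $x$,
\[
-\frac{1}{D_{d,p}r^p}\fint_{B_r}J_p(\phi(z+y)-\phi(z))\dd y=-\Delta_p\phi(z)+o_r(1)\to-\Delta_p\phi(x)
\]
as $r\to0$, $z\to x$. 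It then remains to absorb the shift by $\pm\eta_r$: using Lemma \ref{lem:pineq1} or Lemma \ref{lem:pineq3} with $a=\phi(z+y)-\phi(z)$ (which is comparable to $|y\cdot\nabla\phi(z)|\sim r$ away from a small set, as exploited in the proof of Theorem \ref{thm:consistency2}) and $b=\pm\eta_r$, together with the assumption $\eta_r/r^p\to0$, one shows the contribution of $\eta_r$ to the rescaled integral is $o_r(1)$. Combined with the continuity of $f$ this gives $S(r,z,\phi(z)+\xi\pm\eta_r,\phi+\xi)\to-\Delta_p\phi(x)-f(x)$, which yields both the $\limsup$ and $\liminf$ bounds in the interior case.

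For the boundary case $x\in\partial\Omega$: now $z$ may lie in $\Omega$ or in $\partial\Omega_r$ as $r\to0$ and $z\to x$, so one splits along a subsequence. If $z\in\partial\Omega_r$ then $S=\phi(z)+\xi-G(z)\to\phi(x)-g(x)$ (using $G$ extends $g$ continuously and $\xi\to0$); if $z\in\Omega$ the previous interior analysis gives $S\to-\Delta_p\phi(x)-f(x)$. Taking $\limsup$ over all subsequences lands one below $\max\{-\Delta_p\phi(x)-f(x),\phi(x)-g(x)\}$, and taking $\liminf$ lands one above $\min\{-\Delta_p\phi(x)-f(x),\phi(x)-g(x)\}$, as claimed. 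The main technical obstacle is the first, interior, step: making the convergence in Theorem \ref{thm:MVF} \emph{uniform} in the base point $z$ in a neighborhood of $x$ (legitimate because $|\nabla\phi|\neq0$ on a whole neighborhood and all the Taylor-expansion error estimates in Section \ref{sec:cons} depend only on the $C^2$ norm of $\phi$ and a lower bound on $|\nabla\phi|$), and controlling the $\eta_r$-perturbation with the correct scaling $\eta_r=o(r^p)$ rather than the $o(r^2)$ that sufficed in the normalized setting of \cite{dTMP18}.
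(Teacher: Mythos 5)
Your part (a) and the overall architecture of part (b) coincide with the paper's own proof: monotonicity is immediate from the monotonicity of $J_p$ (and is an identity on $\partial\Omega_r$); for consistency one cancels $\xi$, invokes the $C^2$-consistency of Theorem \ref{thm:MVF} at points $z$ near $x$ (legitimate, as you note, because $|\nabla\phi|\neq 0$ on a whole neighbourhood, so the error terms of Section \ref{sec:cons} are locally uniform and $\Delta_p\phi$ is continuous there), and at a boundary point $x\in\partial\Omega$ splits the $\limsup$/$\liminf$ according to whether $z\in\Omega$ or $z\in\partial\Omega_r$, which produces the $\max$/$\min$ via continuity of $G$, $g$ and $f$. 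All of this matches the paper.

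The one place you genuinely deviate is the absorption of the shift $\eta_r$, and there your argument has a gap when $1<p<2$. With $a=\phi(z+y)-\phi(z)$ (so $|a|\lesssim r$) and $b=\pm\eta_r$, Lemma \ref{lem:pineq3} combined with \eqref{eq:keyest} gives, after averaging and dividing by $r^p$, a perturbation of size $\eta_r r^{p-2}/r^p=\eta_r/r^2$. The hypothesis $\eta_r/r^p\to 0$ controls this only when $p\geq 2$; for $p<2$ one has $\eta_r/r^2=(\eta_r/r^p)\,r^{p-2}$ with $r^{p-2}\to\infty$, so it need not vanish (take $\eta_r=r^q$ with $p<q<2$), and a linear test function $\phi(w)=c\,w_1$ shows the perturbation really is of order $\eta_r/r^2$, not smaller. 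Hence your concluding remark that the scaling $\eta_r=o(r^p)$ is what suffices is not delivered by the estimate you propose; via this route one needs $\eta_r=o(r^{\max\{2,p\}})$. The paper instead invokes the global $\delta$-H\"older continuity of $J_p$, $\delta=\min\{p-1,1\}$, and records the error as $C(\eta_r/r^p)^\delta$ (a step which is itself delicate for $p<2$); in the only application of the lemma, in the proof of Theorem \ref{thm:dpp} ii), one has $\eta_r=e^{-1/r}=o(r^m)$ for every $m$, so either route closes the argument there. But as a blind proof of the lemma as stated, the $p\in(1,2)$ case of your $\eta_r$-step is a genuine gap.
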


\begin{proof}
Note that \eqref{lem:propscheme-item-a} is trivial. For \eqref{lem:propscheme-item-b}, let first $x\in \Omega$. Recall that $\xi\mapsto J_p(\xi)$ is a H\"older continuous function with exponent $\delta=\min\{p-1,1\}>0$. Then, using basic properties of the $\limsup$, consistency for smooth functions of Theorem \ref{thm:MVF} and the continuity of $-\Delta_p\phi$, we get

\begin{equation*}
\begin{split}
 \limsup_{r\to0,\, z\to x,\, \xi \to 0} S(r, z,  \phi(z)+&\xi+\eta_{r}, \phi+\xi)\\
 & =  \limsup_{r\to0,\, \Omega\ni z\to x,\, \xi \to 0} S(r, z,  \phi(z)+\xi+\eta_{r}, \phi+\xi)\\
& =\limsup_{r\to0, z\to x} \left( \frac{1}{D_{d,p}r^p} \fint_{ B_r} J_p(\phi(z)-\phi(z+y) + \eta_r)\dd y - f(z)\right)\\
& \leq  \limsup_{r\to0, z\to x} \left( \frac{1}{D_{d,p}r^p}  \fint_{ B_r} J_p(\phi(z)-\phi(z+y))\dd y - f(z)+ C \left(\frac{\eta_r}{r^p}\right)^{ \delta}  \right)\\
&=  \limsup_{r\to0, z\to x} \left(-\Delta_p \phi(z)-f(z)+ o_r(1)+ C \left(\frac{\eta_r}{r^p}\right)^{ \delta}  \right)\\
& =  \limsup_{z\to x}\left( -\Delta_p \phi(z)-f(z)\right)+  \limsup_{r\to0} \left( o_r(1)+ C \left(\frac{\eta_r}{r^p}\right)^{ \delta} \right)\\
& = -\Delta_p \phi(x)-f(x).
\end{split}
\end{equation*}
If $x\in \partial \Omega$, we simply note that
\begin{equation*}
\begin{split}
\limsup_{r\to0,\, z\to x,\, \xi \to 0} &S(r, z, \phi(z)+\xi+\eta_{r}, \phi+\xi)\\
&\quad = \max \bigg\{ \limsup_{r\to0,\, \Omega\ni z\to x,\, \xi \to 0} \frac{1}{D_{d,p}r^p}\fint_{ B_r} J_p(\phi(z)-\phi(z+y) + \eta_r)\dd y - f(z) ,  \\
&\hspace{6.5cm},\limsup_{r\to 0,  \Omega^c\ni z\to x,\, \xi \to 0} (\phi(z)+\eta_{r}-G(z)+\xi)\bigg\}\\
&\quad \leq  \max \left\{-\Delta_p \phi(x)-f(x), \phi(x)-g(x)\right\}.
\end{split}
\end{equation*}
A similar argument works for the $\liminf$.
\end{proof}

\subsubsection{Proof of the convergence} The only thing left to show is the convergence stated in Theorem \ref{thm:dpp}.  Once we have proved  monotonicity and consistency as stated in Lemma \ref{lem:propscheme}, the proof follows as explained in Section 4.3 of \cite{dTMP18}. 

\begin{proof}[Proof of Theorem \ref{thm:dpp} ii)] Define 
\begin{equation*}
\overline{u}(x)=\limsup_{r\to 0,\, y\to x} U_r(y), \qquad \underline{u}(x)=\liminf_{r\to 0,\, y\to x} U_r(y)
\end{equation*}
By definition $\underline{u}\leq\overline{u}$ in $\overline{\Omega}$. If we show that $\overline{u}$ (resp. $\underline{u}$) is a generalized viscosity subsolution (resp. supersolution) of \eqref{eq:BVP}, the strong uniqueness property of Theorem \ref{thm:SUP} ensures that $\overline{u}\leq \underline{u}$. Thus, $u:=\overline{u}=\underline{u}$ is a generalized viscosity solution of \eqref{eq:BVP} and  $U_r\to u$ uniformly in $\overline{\Omega}$ (see \cite{BS91}). Proposition \ref{prop:equivnotions} ensures that $u$ is a viscosity solution \eqref{eq:BVP}. 

We need to show that $\overline{u}$ is a generalized viscosity subsolution. First note that  $\overline{u}$ is an upper semicontinuous function by definition, and it is also bounded since $U_r$ is uniformly bounded by Proposition \ref{Prop:Stability}. 
Take $x_0\in \overline{\Omega}$ and $\varphi\in C^2(B_R(x_0))$ such that $\overline{u}(x_0)=\varphi(x_0)$, $\overline{u}(x)<\varphi(x)$ if $x\not=x_0$. We separate the proof into different cases depending on the value of the gradient of $\varphi$ at $x_0$.
 
 \textit{Case 1:} Let $\nabla \varphi(x_0)\not=0$. In this case, we can consider the standard condition \eqref{eq:testfuncdef2} rather than  \eqref{eq:testfuncdef}. Then, for all $x\in \overline{\Omega}\cap B_R(x_0)\setminus\{x_0\}$, we have that
\begin{equation}\label{x0localmax}
 \overline{u}(x)-\varphi(x)<0=  \overline{u}(x_0)-\varphi(x_0).
 \end{equation}
 We claim that we can find a sequence $(r_n,y_n)\to(0,x_0)$ as $n\to \infty$ such that 
\begin{equation}\label{eq:famseq}
 U_{r_n}(x)-\varphi(x) \leq U_{r_n}(y_n)-\varphi(y_n)+ e^{-1/r_n}   \quad \textup{for all} \quad x\in \overline{\Omega}\cap B_R(x_0).
 \end{equation}
 To show this, we consider a sequence $(r_j,x_j)\to(0,x_0)$ as $j\to \infty$ such that $U_{r_j}(x_j)\to \overline{u}(x_0)$, which exists by definition of $\overline{u}$. For each $j$, there exists $y_j$ such that
 \begin{equation}\label{eq:exp}
  U_{r_j}(y_j)-\varphi(y_j)+ e^{-1/r_j}  \geq \sup_{\overline{B}_r(x_0)}\{U_{r_j}-\varphi\}.
  \end{equation}
Now extract a subsequence $(r_n,x_n,y_n)\to (0, x_0, \hat{y})$ as $n\to \infty$ for some $\hat{y}\in \overline{\Omega}$. Then,
 \begin{equation*}
 \begin{split}
 0&= \overline{u}(x_0)-\varphi(x_0)\\
 &= \lim_{n\to\infty} \left\{U_{r_n}(x_n)-\varphi(x_n)\right\}\\
 &\leq \limsup_{n\to\infty}\left\{U_{r_n}(y_n)-\varphi(y_n)+ e^{-1/r_n}\right\}\\
 &\leq \limsup_{r\to0, y \to \hat{y}}\left\{U_{r}(y)-\varphi(y)+ e^{-1/r}\right\}\\
 &= \overline{u}(\hat{y})- \varphi(\hat{y}),
 \end{split}
 \end{equation*}
 where we in the third inequality have used \eqref{eq:exp}. This together with \eqref{x0localmax} implies that $\hat{y}=x_0$ and thus finishes proof of the claim. 
 
 Choose now $\xi_n:=U_{r_n}(y_n)-\varphi(y_n)$. We have from \eqref{eq:famseq} that,
 \[
 U_{r_n}(x)\leq \varphi(x) + \xi_n + e^{-1/r_n} \quad \textup{for all} \quad x\in \overline{\Omega}\cap B_R(x_0).
 \]
 From the monotonicity given in Lemma \ref{lem:propscheme}\eqref{lem:propscheme-item-a} we thus get,
 \begin{equation*}
 \begin{split} 
 0&=S(r_n,y_n, U_{r_n}(y_n),U_{r_n})\\
 &=S(r_n,y_n, \varphi(y_n)+\xi_n,U_{r_n})\\
 &\geq S(r_n,y_n, \varphi(y_n)+\xi_n,\varphi + \xi_n + e^{-1/r_n} )\\
 &=S(r_n,y_n, \varphi(y_n)+\xi_n- e^{-1/r_n} ,\varphi + \xi_n ).
 \end{split}
 \end{equation*}
 Note that  $e^{-1/r}=o(r^p)$. By the consistency, Lemma \ref{lem:propscheme}\eqref{lem:propscheme-item-b}, we have 
  \begin{equation*}
 \begin{split} 
 0 & \geq \liminf_{r_n\to0,\, y_n\to x_0,\, \xi_n\to0}S(r_n,y_n, \varphi(y_n)+\xi_n- e^{-1/r_n} ,\varphi + \xi_n )\\
 & \geq \liminf_{r\to0,\, y\to x_0,\, \xi \to0}S(r,y, \varphi(y)+\xi- e^{-1/r} ,\varphi + \xi )\\
 & \geq \left\{\begin{array}{cccl}
\displaystyle-\Delta_p\varphi(x_0)-f(x_0)& \text{ if } &x_0\in\Omega,\\
\displaystyle\min\{-\Delta_p\varphi(x_0)-f(x_0), \overline{u}(x_0)-g(x_0)\}& \text{ if } &x_0\in \partial{\Omega},
\end{array}\right.
 \end{split}
 \end{equation*}
 which are the required inequalities in this case. 
 
\textit{Case 2:} Let $\nabla \varphi(x_0)=0$ and assume that $\overline u$ happens to be constant in some ball $B_\rho(x_0)$ for $\rho>0$ small enough.   
Then the function $\phi(x)=\overline u(x_0)+|x-x_0|^{\frac{p}{p-1}+1}$ touches $\bar u$ from above at $x_0$  and it is a suitable test function since $\nabla\phi(x)\not=0$ if $x\not=x_0$.  Arguing as before we get
\[
0 \geq \liminf_{r\to0,\, y\to x_0,\, \xi \to0}S(r,y, \phi(y)+\xi- e^{-1/r} ,\phi + \xi ).
\]
Assume for simplicity that $x_0\in \Omega$. The case $x_0\in \partial \Omega$ follows similarly. Following the proof of Lemma \ref{lem:propscheme}\eqref{lem:propscheme-item-b}, the above inequality implies
\begin{equation}\label{eq:testspecial}
0\geq  \liminf_{r\to0,\, y\to x_0}  \frac{1}{D_{d,p}r^p}  \fint_{ B_r} J_p(\phi(y)-\phi(y+z))\dd z - f(x_0)
\end{equation}
Now by Lemma \ref{lem:xbeta}, 
\[
 \liminf_{r\to0,\, y\to x_0}  \frac{1}{D_{d,p}r^p}  \fint_{ B_r} J_p(\phi(y)-\phi(y+z))\dd z=0.
\]
On the other hand, since $\overline u$ is constant around $x_0$, we also have $\Delta_p \overline u(x_0)=0$. All together we get from \eqref{eq:testspecial} that 
\[
-\Delta_p \overline u(x_0)=0\leq f(x_0),
\]
that is, $\overline u(x_0)$ is a classical subsolution and then also a viscosity subsolution.
 
 \textit{Case 3:}  Let $\nabla \varphi(x_0)=0$ and assume that $\overline u$ is not constant in any ball $B_\rho(x_0)$.  Then we may argue as in the proof of Proposition 2.4 in \cite{AR18} to prove that there is a sequence $y_k\to 0$ such that the functions $\varphi_k(x)=\varphi(x+y_k)$ touches $\overline u$ from above at points $x_k$ and $\nabla \varphi_k(x_k)\neq 0$ for all $k$. As in Case 1, we get
 $$
 0\geq  \left\{\begin{array}{cccl}
\displaystyle-\Delta_p\varphi(x_k)-f(x_k)& \text{ if } &x_k\in\Omega,\\
\displaystyle\min\{-\Delta_p\varphi(x_k)-f(x_k), \overline{u}(x_k)-g(x_k)\}& \text{ if } &x_k\in \partial{\Omega},
\end{array}\right.
 $$
By passing $k\to\infty$ we obtain the desired inequalities also in this case.

The steps above together show that $\overline{u}$ is a viscosity subsolution and finishes the proof.
 \end{proof}

\appendix

\section{Auxiliary inequalities}\label{sec:aux}
 We need some technical results.
\begin{lemma}{\label{lem:pineq1}}
Let $p\geq 2$ and $\e\in [0,p-2)$. Then 
$$
\Big||a+b|^{p-2}(a+b)-|a|^{p-2}a-(p-1)|a|^{p-2}b\Big|\leq C\max( |a|,|a+b|)^{p-2-\e}|b|^{1+\e}, 
$$
where $C=C(p,\e)$.
\end{lemma}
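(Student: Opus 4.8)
The plan is to recognise the left-hand side as the first-order Taylor remainder of $J_p(t)=|t|^{p-2}t$. For $p\ge 2$ this function lies in $C^1(\R)$ with $J_p'(t)=(p-1)|t|^{p-2}$, so writing the remainder in integral form gives
\[
|a+b|^{p-2}(a+b)-|a|^{p-2}a-(p-1)|a|^{p-2}b=(p-1)\,b\int_0^1\big(|a+tb|^{p-2}-|a|^{p-2}\big)\,\dd t .
\]
Hence it suffices to prove the pointwise bound
\[
\big||a+tb|^{p-2}-|a|^{p-2}\big|\le C(p,\e)\,\max(|a|,|a+b|)^{p-2-\e}\,(t|b|)^{\e},\qquad t\in[0,1],
\]
after which one integrates in $t$ (which costs only the factor $\int_0^1 t^\e\,\dd t=(1+\e)^{-1}$) and multiplies by $(p-1)|b|$.

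For the pointwise bound I would work with $g_\beta(s):=|s|^{\beta}$, $\beta:=p-2\ge0$, evaluated at $s_1:=a+tb$ and $s_2:=a$, and combine two elementary estimates. The first is trivial: $|g_\beta(s_1)-g_\beta(s_2)|\le 2\max(|s_1|,|s_2|)^{\beta}$. The second is a modulus-of-continuity estimate with exponent $\theta_0:=\min(1,\beta)$: when $p\ge 3$ this is the mean value theorem for $g_\beta$ on the interval between $|s_2|$ and $|s_1|$, giving $|g_\beta(s_1)-g_\beta(s_2)|\le \beta\max(|s_1|,|s_2|)^{\beta-1}\big||s_1|-|s_2|\big|$; when $2\le p<3$ it follows from the elementary inequality $\big||s_1|^{\beta}-|s_2|^{\beta}\big|\le|s_1-s_2|^{\beta}$ combined with a short case distinction on whether $|s_1-s_2|$ exceeds $\max(|s_1|,|s_2|)$. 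Geometrically interpolating the two bounds (the first with weight $1-\e/\theta_0$, the second with weight $\e/\theta_0$) yields, for every $\e\in[0,\theta_0]$,
\[
\big||s_1|^{\beta}-|s_2|^{\beta}\big|\le C(p,\e)\,\max(|s_1|,|s_2|)^{\beta-\e}\,|s_1-s_2|^{\e}.
\]
Since $|s_1-s_2|=t|b|$ and, by convexity of $s\mapsto|s|$, $|a+tb|\le(1-t)|a|+t|a+b|\le\max(|a|,|a+b|)$, we obtain $\max(|s_1|,|s_2|)\le\max(|a|,|a+b|)$, which is precisely the form of the right-hand side we want.

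The argument is routine; the only delicate points are the loss of differentiability of $|s|^{p-2}$ at the origin in the range $2<p<3$, which forces the Hölder estimate in place of the mean value theorem, and the passage from the endpoint exponents $\e=0$ and $\e=\min(1,p-2)$ to intermediate ones by interpolation (the case $p=2$ being vacuous). Tracking constants, one obtains $C(p,\e)=\dfrac{p-1}{1+\e}\,2^{1-\e/\theta_0}\,\big(\max(1,p-2)\big)^{\e/\theta_0}$, depending only on $p$ and $\e$ as asserted.
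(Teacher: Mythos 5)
Your argument is correct as far as it goes, and it is in essence the fleshed-out version of the paper's one-line proof ("Taylor expansion of $J_p$"): you write the first-order remainder in integral form using $J_p'(t)=(p-1)|t|^{p-2}$, bound $\big||a+tb|^{p-2}-|a|^{p-2}\big|$ by geometrically interpolating the trivial estimate with the Lipschitz (for $p\ge 3$) or H\"older (for $2\le p<3$) modulus of $s\mapsto|s|^{p-2}$, and use $|a+tb|\le\max(|a|,|a+b|)$ by convexity. Each of these steps, and the bookkeeping of the constant, is sound.

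The one substantive discrepancy is the range of the exponent: your proof yields the inequality only for $\varepsilon\in[0,\min\{1,p-2\}]$, while the lemma asserts it for every $\varepsilon\in[0,p-2)$, a strictly larger set when $p>3$. This is not a gap you could have closed, because the statement is false in that extra range: take $p=4$, $b=1$, $a=N\to\infty$; then the left-hand side is $|(N+1)^3-N^3-3N^2|=3N+1$, whereas the right-hand side is $C(N+1)^{2-\varepsilon}=\mathit{o}(N)$ for any fixed $\varepsilon>1$, so no constant $C(p,\varepsilon)$ can work. The natural ceiling is $\varepsilon=1$, which corresponds to the second-order Taylor bound via $J_p''$. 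Since the paper only invokes the lemma with $\varepsilon=0$ and with some arbitrarily small $\varepsilon>0$ (to turn $\mathcal{O}(|y|^{p+\varepsilon})$ into $\mathit{o}(|y|^p)$), the range you actually prove is all that is needed; but you should state the conclusion for $\varepsilon\in[0,\min\{1,p-2\}]$ rather than claim the full interval $[0,p-2)$, and it would be worth flagging this correction to the lemma as stated.
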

\begin{proof}
It follows from the Taylor expansion of the function $J_p(t)=|t|^{p-2}t.$
\end{proof} The following inequality is Lemma 3.4 in \cite{KKL}.
\begin{lemma}{\label{lem:pineq3}} Let $p\in (1,2)$. Then 
$$
\Big||a+b|^{p-2}(a+b)-|a|^{p-2}a\Big|\leq C\left(|a|+|b|\right)^{p-2}|b|.
$$
Here $C$ only depends on $p$.
\end{lemma}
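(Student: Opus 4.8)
The statement is a one-variable inequality for $J_p(t)=|t|^{p-2}t$ with $1<p<2$, and the plan is to reduce it to a single normalized configuration by scaling and then to split into two regimes according to the size of $b$.

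First I would dispose of the trivial case $a=0$: there the left-hand side equals $|b|^{p-1}$ and the right-hand side equals $C|b|^{p-1}$, so any $C\geq 1$ works. Assume now $a\neq 0$. Since $J_p$ is positively homogeneous of degree $p-1$ and odd, replacing $(a,b)$ by $(a/|a|,\,b/|a|)$ and, if necessary, by $(-a,-b)$ multiplies both sides of the claimed inequality by the same factor $|a|^{p-1}$. Hence it suffices to prove
$$
\bigl|J_p(1+b)-1\bigr|\leq C\,(1+|b|)^{p-2}|b|\qquad\text{for all }b\in\R .
$$

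Then I would argue by cases. If $|b|\leq \tfrac12$, the segment $[1-|b|,1+|b|]$ is contained in $[\tfrac12,\tfrac32]$, where $J_p$ is $C^1$ with $J_p'(t)=(p-1)t^{p-2}$; by the mean value theorem $\bigl|J_p(1+b)-1\bigr|\leq (p-1)\,2^{2-p}|b|$, while $(1+|b|)^{p-2}\geq(\tfrac32)^{p-2}$, which yields the inequality with $C=(p-1)3^{2-p}$. If instead $|b|>\tfrac12$, I would bound each term crudely by its modulus: $\bigl|J_p(1+b)-1\bigr|\leq|1+b|^{p-1}+1\leq 2(1+|b|)^{p-1}$, whereas $(1+|b|)^{p-2}|b|\geq\tfrac13(1+|b|)^{p-1}$ because $|b|>\tfrac12$ forces $3|b|\geq 1+|b|$; this gives the inequality with $C=6$. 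Taking the larger of the two constants finishes the proof (and in fact a $p$-independent $C$ can be chosen for $p\in(1,2)$).

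The only delicate point — and the reason for splitting at $|b|=\tfrac12$ — is that $J_p'$ is unbounded near the origin for $p<2$, so a naive use of the mean value theorem, or of the integral formula $J_p(1+b)-1=(p-1)\int_0^1|1+tb|^{p-2}b\,\dd t$ across a zero of $1+tb$, is not available. Restricting the differentiable estimate to the regime where the relevant interval stays away from $0$, and handling the complementary regime by plain absolute-value bounds, circumvents this entirely. (Alternatively one may simply invoke Lemma~3.4 in \cite{KKL}, as indicated.)
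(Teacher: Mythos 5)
Your proof is correct, and it is not the paper's route: the paper does not prove this lemma at all, it simply quotes it as Lemma 3.4 of \cite{KKL}. Your argument is therefore a genuinely different, self-contained alternative. The normalization step is sound ($J_p$ is odd and positively homogeneous of degree $p-1$, so both sides scale by $|a|^{p-1}$ and one may take $a=1$ after a sign flip, with $a=0$ handled as an identity), and the two regimes are handled correctly: for $|b|\le\tfrac12$ the segment joining $1$ and $1+b$ stays in $[\tfrac12,\tfrac32]$, so the mean value theorem with $J_p'(t)=(p-1)t^{p-2}$ gives the bound with $C=(p-1)3^{2-p}$, while for $|b|>\tfrac12$ the crude estimate $|J_p(1+b)-1|\le 2(1+|b|)^{p-1}$ combined with $3|b|\ge 1+|b|$ gives $C=6$. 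Your remark that the split at $|b|=\tfrac12$ is forced by the blow-up of $J_p'$ at the origin when $p<2$ identifies precisely the delicate point. What the citation buys the paper is brevity; what your proof buys is a self-contained appendix and an explicit constant, uniform in $p\in(1,2)$ (indeed $C=6$ suffices, since $(p-1)3^{2-p}\le 3$), which the bare reference to \cite{KKL} does not display.
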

 We also need the following  lemma.
\begin{lemma}\label{lem:stupid} Let $s\in (0,1)$ and let $L$ be a quadratic form  in $\R^d$  such that 
\begin{equation}\label{eq:QFest}
|L(\omega,\omega)|< \frac{1}{d^2+1},
\end{equation}
for all $ |\omega| =  1.$ Then 
$$
\int_{S^d}|{e}_1\cdot \omega+L(\omega,\omega)|^{-s} \dd\omega \leq C, 
$$
where $C$ only depends on $s$ and $d$.

\end{lemma}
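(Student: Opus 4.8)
The plan is to reduce the estimate to a one-dimensional integrability statement by slicing the sphere with hyperplanes orthogonal to $e_1$. First I would note that the only danger is near the set where $e_1\cdot\omega + L(\omega,\omega)$ vanishes, since away from that set the integrand is bounded; so it suffices to control the measure of the sublevel sets $\{\omega\in S^d : |e_1\cdot\omega + L(\omega,\omega)| < t\}$ and show this measure is $O(t)$ uniformly in the admissible quadratic forms $L$. The smallness hypothesis \eqref{eq:QFest} is exactly what guarantees uniformity: writing $\omega = (\omega_1,\omega')$ with $\omega_1 = e_1\cdot\omega$, the function $g(\omega) = \omega_1 + L(\omega,\omega)$ satisfies, on the portion of the sphere where $g$ is small, $|\partial g/\partial\omega_1|$ bounded away from zero. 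Indeed $\partial g/\partial\omega_1 = 1 + 2(\text{linear in }\omega)$, and since $|L(\omega,\omega)| < 1/(d^2+1)$ for unit $\omega$, the associated symmetric matrix has operator norm controlled, so its entries are bounded by a dimensional constant, forcing $\partial g/\partial\omega_1 \geq 1/2$ say, wherever we need it. Hence for fixed $\omega'$ the map $\omega_1 \mapsto g(\omega_1,\omega')$ is a diffeomorphism with derivative bounded below, so $\{\omega_1 : |g| < t\}$ has length $O(t)$, and integrating over $\omega'$ (a bounded region) gives the sublevel set bound.

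Concretely I would parametrize: cover $S^d$ by the two graphs $\omega_1 = \pm\sqrt{1-|\omega'|^2}$ over the disk $|\omega'|\le 1$, with the surface measure $\dd\omega$ comparable to $|\omega'|$-independent density away from the equator and integrable near it. Then
\[
\int_{S^d}|g(\omega)|^{-s}\,\dd\omega \le C\int_{|\omega'|\le 1}\int |g(\omega_1(\omega',u),\omega')|^{-s}\,\dd u\,\dd\omega',
\]
where $u$ is an arc-length-type variable along the meridian. For fixed $\omega'$, changing variables from $u$ to $v = g$ (legitimate since $dg/du$ is bounded above and below by the gradient computation above) turns the inner integral into $\int |v|^{-s}\,\dd v$ over a bounded interval, which is finite precisely because $s\in(0,1)$, with a bound depending only on $s$ and $d$. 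Integrating the resulting constant over the bounded $\omega'$-region yields the claim.

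The main obstacle is the bookkeeping near the equator $|\omega'| = 1$, where the graph parametrization degenerates (the meridian becomes vertical and $dg/d\omega_1$ is no longer the relevant derivative). There I would instead use a different coordinate chart — slice by hyperplanes orthogonal to a direction transverse to $e_1$, or simply observe that near the equator $\omega_1$ is small, so $g(\omega)\approx \omega_1 + L(\omega,\omega)$ and one can absorb the problem into a region where a tangential derivative of $g$ is bounded below instead; alternatively, a standard coarea/Federer argument applied directly to $g$ on $S^d$ circumvents the need for explicit charts. The second delicate point is making the derivative lower bound $|\nabla_{S^d} g| \gtrsim 1$ genuinely uniform over all $L$ satisfying \eqref{eq:QFest}: this is where the specific constant $1/(d^2+1)$ is used, controlling the Frobenius (hence operator) norm of the matrix of $L$ so that the perturbation $2(\text{matrix})\omega$ cannot cancel the leading $e_1$ term on the relevant part of the sphere.
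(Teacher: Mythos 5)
Your overall strategy is the same as the paper's --- slice the sphere into one-dimensional curves, obtain a uniform lower bound on the derivative of $g(\omega)=e_1\cdot\omega+L(\omega,\omega)$ along the slices from the smallness hypothesis \eqref{eq:QFest}, and finish with the integrability of $|v|^{-s}$ for $s\in(0,1)$ --- but the central step is not correct as written. On the sphere the variables $\omega_1=e_1\cdot\omega$ and $\omega'$ are constrained by $\omega_1^2+|\omega'|^2=1$, so ``fix $\omega'$ and vary $\omega_1$'' does not slice $S^d$ at all, and your displayed inequality integrates over the full disk in $\omega'$ \emph{and} an extra meridian variable $u$, which overcounts dimensions. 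More importantly, the quantity you bound below, the ambient partial derivative $\partial g/\partial\omega_1=1+2(M\omega)_1\ge 1-2\|M\|$, is true but is not what controls level sets on the sphere: the relevant derivative is the tangential one along a meridian, and $\frac{d}{d\theta_1}(e_1\cdot\omega)=-\sin\theta_1$ is \emph{not} bounded away from zero near the poles. The proof hinges on the observation, absent from your proposal, that $g$ can only vanish where $|e_1\cdot\omega|=|L(\omega,\omega)|<1/(d^2+1)<1/2$, i.e.\ far from the poles, and on that band the meridian derivative $f'(\theta_1)=-\sin\theta_1\,(1+2A\cos\theta_1)$ (with $|A|<1/2$ coming from the eigenvalue bound) satisfies $|f'|\ge c>0$, since $\sin\theta_1>1/\sqrt2$ while $|2A\cos\theta_1|\le 1/\sqrt2$. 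This interplay between the location of the zero set and the factor $\sin\theta_1$ is the whole point; without it the claim that the sublevel sets $\{|g|<t\}$ have measure $O(t)$ is unjustified.

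Relatedly, you classify the equator as a bookkeeping nuisance caused by a degenerating chart, but the band $|e_1\cdot\omega|<1/2$ around the equator is exactly where the integrand is singular, so it is the main event rather than a boundary case; the degeneracy is an artifact of the graph chart, and spherical (meridian) coordinates, as in the paper, remove it. Your coarea fallback would work, but it requires precisely the missing estimate: a uniform lower bound for $|\nabla_{S}g|$ on $\{|g|\le t_0\}$, which follows because there the tangential projection of $e_1$ has norm $\sqrt{1-(e_1\cdot\omega)^2}\ge \sqrt3/2$, dominating the perturbation $2\|M\|\le 2/(d^2+1)$; with that in hand, $|\{|g|<t\}|\lesssim t$ and a layer-cake argument give the bound for $s<1$. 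For comparison, the paper works directly in spherical coordinates: on $\theta_1\in(0,\pi/4)$ the function $g$ is bounded below by $1/\sqrt2-1/2>0$, and on $(\pi/4,\pi/2)$ it is strictly monotone in $\theta_1$, so it changes sign at most once and $\int|f|^{-s}\,d\theta_1\le C\int(-f')|f|^{-s}\,d\theta_1$ is evaluated explicitly. To complete your argument you would need to supply either that monotonicity computation or the tangential gradient bound for the coarea route.
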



\begin{proof}
We use spherical coordinates, $( \omega_1,\omega_i)=(\cos \theta_1, \sin \theta_1 g_i(\theta_i,\ldots, \theta_{d-1}))$. By symmetry it is enough to consider the range $\theta_1:0\to  \pi/2$. Now, if $\theta\in (0,\pi/4)$ then 
$$
\cos \theta_1>1/\sqrt{2}
$$
so that 
\begin{equation}\label{eq:pi4}
e_1\cdot \omega+L(\omega,\omega)\geq 1/\sqrt{2}-1/2>0. 
\end{equation}
Hence the integral over that interval is bounded by some constant.
\medskip

On the other part of the interval we introduce, for fixed $(\theta_2,\ldots, \theta_{d-1})$, the function
$$f(\theta_1)={e}_1\cdot \omega+L(\omega,\omega).$$
We note that, due to the bounds on $L$, 
\[
\begin{split}
{e}_1\cdot \omega+L(\omega,\omega) &=\cos \theta_1+\lambda_1\cos^2 \theta_1+\sin^2 \theta_1\sum_{i=2}^{d} \lambda_i g^2_i \\
&=\cos \theta_1+\left(\lambda_1-\sum_{i=2}^{d} \lambda_i g^2_i\right)\cos^2\theta_1+\sum_{i=2}^{d} \lambda_i g^2_i\\
&= \cos \theta_1+A\cos^2\theta_1 +B, 
\end{split}
\]
where $\lambda_i$ denote the eigenvalues of $L$ and where $A$ and $B$ are functions of $\theta_2,\ldots,\theta_{d-1}$, with $|A|,|B|<1/2$. 
We have 
$$
f'(\theta_1)=-\sin \theta_1-2A\sin \theta_1\cos \theta_1 = -\sin\theta_1(1+2A\cos \theta_1),
$$
where $|2A\cos \theta_1|\leq 1/\sqrt{2}$ and $\sin \theta_1>1/\sqrt{2}$ when $\theta_1\in (\pi/4,\pi/2)$. Therefore, 
$f'(\theta_1)<-C<0$ when $\theta_1\in (\pi/4,\pi/2)$. For this reason, $f$ may change sign at most once in the interval $(\pi/4,\pi/2)$. Suppose it happens at $\theta_0$. We may then write
\[
\begin{split}
\int_{\pi/4}^{\theta_0} |f(t)|^{-s} dt&=\int_{\pi/4}^{\theta_0} f(t)^{-s} dt \\
&\leq C\int_{\pi/4}^{\theta_0} -f'(t)f(t)^{-s} dt \\
&=C(s)\left[-f(t)^{-s+1}\right]_{\pi/4}^{\theta_0}\\
&\leq C(s)f(\pi/4)^{-s+1}\\
&\leq C(s).
\end{split}
\]
Similarly
$$
\int_{\theta_0}^{\pi/2} |f(t)|^{-s} dt =\int_{\theta_0}^{\pi/2} (-f(t))^{-s} dt \leq C(s)f(\pi/2)^{-s+1}\leq C(s).
$$
Integration over the other angles $\theta_2,\ldots,\theta_{n-1}$ yields the desired bound.
\end{proof}

\begin{lemma}\label{lem:xbeta} Assume $p\in (1,2)$ and let $\phi(x)=|x|^\beta$ with $\beta>p/(p-1)$. Then 
\[
\lim_{r\to 0,x\to 0}
\frac{1}{r^p} \fint_{\partial B_r} |\phi(x+y)-\phi(x)|^{p-2} (\phi(x+y)-\phi(x)) \dd \sigma(y)=0. 
\]
\end{lemma}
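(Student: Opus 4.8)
The plan is to exploit the homogeneity of $\phi(x)=|x|^\beta$ to collapse the two‑parameter limit $(r,x)\to(0,0)$ into a one‑parameter boundedness statement, which can then be handed directly to Theorem~\ref{thm:MVF}. First I would record the scaling identity: writing $I(r,x)$ for the quantity in the statement and substituting $y=r\omega$ with $\omega\in\partial B_1$,
\[
I(r,x)=\frac{1}{r^p}\fint_{\partial B_1}J_p\bigl(\phi(x+r\omega)-\phi(x)\bigr)\dd\sigma(\omega),
\]
and then, for $x\neq0$, with $\hat x=x/|x|$ and $s=r/|x|$, the identity $\phi(x+r\omega)-\phi(x)=|x|^\beta(|\hat x+s\omega|^\beta-1)$ together with the rotation invariance of the spherical average gives
\[
I(r,x)=|x|^{\tau}\,G(s),\qquad \tau:=\beta(p-1)-p,\qquad G(s):=\frac{1}{s^p}\fint_{\partial B_1}J_p\bigl(|e_1+s\omega|^\beta-1\bigr)\dd\sigma(\omega).
\]
Here the hypothesis $\beta>p/(p-1)$ is exactly the statement $\tau>0$; this is the only place it is used.

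The remaining task is to bound $G$ on $(0,\infty)$. For $s\geq\tfrac12$ I would use the crude estimate $|e_1+s\omega|\leq 1+s\leq 3s$, so that $\bigl||e_1+s\omega|^\beta-1\bigr|\leq(3s)^\beta$ and $|J_p(\cdot)|\leq (3s)^{\beta(p-1)}$, whence $G(s)\leq 3^{\beta(p-1)}s^{\tau}$. For $s\in(0,\tfrac12]$ I would note that $\phi(z)=|z|^\beta$ is smooth on $\overline{B_{1/2}(e_1)}$ (which stays away from the origin) and $|\nabla\phi(e_1)|=\beta\neq0$, so $\phi$ is an admissible test function at $e_1$; since $\phi(e_1)=1$, comparing with the definition of $\Imvp$ shows $G(s)=C_{d,p}\,\Imvp[\phi](e_1)$ with the radius in $\Imvp$ equal to $s$. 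Theorem~\ref{thm:MVF} then gives $G(s)\to C_{d,p}\plap\phi(e_1)$ as $s\to0^+$, and since $s\mapsto G(s)$ is continuous on $(0,\tfrac12]$ (the integrand is jointly continuous there and $\partial B_1$ is compact), $G$ is bounded on $(0,\tfrac12]$, say by $M$.

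Combining the two regimes yields $|I(r,x)|\leq M|x|^\tau+3^{\beta(p-1)}r^\tau$ for all $x\neq0$, and the same bound holds at $x=0$, where $I(r,0)=r^\tau$. Letting $(r,x)\to(0,0)$ and using $\tau>0$ finishes the proof. The step I expect to require the most care is the regime $s\leq\tfrac12$: one cannot simply invoke Theorem~\ref{thm:MVF} for $\phi$ at the moving base point $x$, because the error term $o_r(1)$ there is a priori base‑point dependent; the scaling step is precisely what reduces everything to the single fixed point $e_1$, where Theorem~\ref{thm:MVF} applies verbatim. As an alternative to citing Theorem~\ref{thm:MVF}, one could bound $G$ near $0$ by hand, repeating the $\{|\omega_1|\lessgtr\gamma\}$ decomposition from the proof of Theorem~\ref{thm:consistency2}, but reusing the consistency result is cleaner.
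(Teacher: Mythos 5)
Your proof is correct, and it takes a genuinely different route from the paper. The paper argues directly at the moving base point: for $x=0$ it uses $|y|^{\beta(p-1)}=o(|y|^{p})$, and for $x\neq 0$ it Taylor-expands $\phi$, kills the linear term by symmetry, and estimates the remainder via Lemma \ref{lem:pineq3} together with an integral inequality from \cite{KKL}, finally splitting into the regimes $|x|\leq r$ and $r\leq |x|$ to get the explicit rates $r^{(\beta-2)(p-1)+p-2}$ and $|x|^{\beta(p-1)-p}$. You instead exploit the homogeneity of $|x|^{\beta}$ to write $I(r,x)=|x|^{\tau}G(r/|x|)$ with $\tau=\beta(p-1)-p>0$, and then only need boundedness of the one-variable profile $G$: for $s\geq \tfrac12$ by the crude bound $|G(s)|\leq 3^{\beta(p-1)}s^{\tau}$, and for $s\leq\tfrac12$ by recognizing $G(s)=C_{d,p}\,\Imvp[\phi](e_1)$ (radius $s$) and invoking Theorem \ref{thm:MVF} at the single fixed point $e_1$, where $\phi$ is smooth and $\nabla\phi(e_1)\neq 0$; this is legitimate and non-circular, since Theorem \ref{thm:consistency2} is proved independently of this lemma. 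Your observation that the scaling step is what makes the base-point-dependent $o_r(1)$ in Theorem \ref{thm:MVF} harmless is exactly the right point of care. What each approach buys: yours is shorter and more conceptual, reusing the consistency theorem instead of redoing singular-integral estimates, while the paper's computation is hands-on and yields explicit decay rates in $r$ and $|x|$ (and, as written, is phrased for averages over balls, which is the form actually invoked in Case 2 of the convergence proof; your scaling identity applies verbatim to the ball average as well, so this is only a cosmetic difference).
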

\begin{proof} 
If $x=0$, we have $|\phi(x+y)-\phi(x)|=|x|^\beta= o(|y|^{\frac{p}{p-1}}) $. Then
\[
\left|\fint_{B_r} J_p(\phi(x+y)-\phi(x)) \dd y\right| = \fint_{B_r} |J_p(o(|y|^{\frac{p}{p-1}}))|dz = o(r^p).
\]
Assume now that $x\not=0$ so that $\nabla \phi(x)\not=0$. We can use the symmetry of $J_p(y\cdot \nabla \phi(x))$ and Lemma \ref{lem:pineq3} to conclude that
\begin{equation*}
\begin{split}
\left|\fint_{B_r} J_p(\phi(x+y)-\phi(x)) \dd y\right| &= \left|\fint_{B_r} J_p( y\cdot \nabla\phi (x) + \frac{1}{2}y^T D^2 \phi(\xi)y ) \dd y\right| \\
&\leq \fint_{B_r}  \Big||y\cdot \nabla\phi (x)| + |y|^2 \sup_{\xi\in B_{r}(x)}| D^2 \phi(\xi)| \Big|^{p-2} |y|^2\sup_{\xi\in B_{r}(x)}| D^2 \phi(\xi)|  \dd y.
\end{split}
\end{equation*}

We may assume that $x$ lies in the $e_1$-direction and write $\nabla \phi (x) =\beta|x|^{\beta-1}\hat e_1:=c e_1$  for some $c>0$.  Then
\[
\begin{split}
\frac{1}{r^p}\fint_{ B_r} \Big|y\cdot \nabla\phi (x) &+ |y|^2\sup_{\xi\in B_{r}(x)}| D^2 \phi(\xi)| \Big|^{p-2} |y|^2\sup_{\xi\in B_{r}(x)}| D^2 \phi(\xi)| \dd y\\
& =  \frac{1}{r^p}\fint_{ B_r}c^{p-2}r^{p} \Big||\hat y\cdot e_1| + c^{-1}r\sup_{\xi\in B_{r}(x)}| D^2 \phi(\xi)| \Big|^{p-2} \sup_{\xi\in B_{r}(x)}| D^2 \phi(\xi)| \dd y\\
&\leq c^{p-2}\fint_{ B_r} \Big||\hat y\cdot e_1| + c^{-1}C(|x|+r)^{\beta-2}r \Big|^{p-2} C(|x|+r)^{\beta-2} \dd y\\
&\leq c^{p-2}\Big|1+ c^{-1}C(|x|+r)^{\beta-2}r \Big|^{p-2} C(|x|+r)^{\beta-2}  \\
&  = \Big|c+ C(|x|+r)^{\beta-2}r \Big|^{p-2} C(|x|+r)^{\beta-2} \\
&  \leq  \Big||x|^{\beta-1}+C(|x|+r)^{\beta-2}r \Big|^{p-2} C(|x|+r)^{\beta-2} 
\end{split}
\]
 where the third and the forth estimates are due to the fact that $|D^2 \phi (\xi)|\leq C|\xi|^{\beta-2}\leq C(|x|+r)^{\beta-2}$ if $\xi \in B_{r}(x)$ and Lemma 3.5 in \cite{KKL}.  
If $|x|\leq r$ then we obtain the estimate
\begin{equation*}
\begin{split}
  \Big||x|^{\beta-1}+C(|x|+r)^{\beta-2}r \Big|^{p-2} C(|x|+r)^{\beta-2} &\lesssim  |(|x|+r)^{(\beta-2)}r |^{p-2} (|x|+r)^{\beta-2}\\
 &\leq (|x|+r)^{(\beta-2)(p-1)} r^{p-2} \\
 &\lesssim r^{(\beta-2)(p-1)+p-2} = o_r(1).
\end{split}
\end{equation*}
 If instead $r\leq |x|$ we obtain 
\begin{equation*}
\begin{split}
  \Big||x|^{\beta-1}+C(|x|+r)^{\beta-2}r \Big|^{p-2} C(|x|+r)^{\beta-2} &\lesssim   |x|^{(\beta-1)(p-2)} (|x|+r)^{\beta-2}\\
 &\lesssim  |x|^{(\beta-1)(p-2)+\beta-2}  = o_{|x|}(1).
\end{split}
\end{equation*}

\end{proof}

\section*{Acknowledgements}
F. del Teso was partially supported by PGC2018-094522-B-I00 from the MICINN of the Spanish Government. E. Lindgren is supported by the Swedish Research Council, grant no. 2017-03736. Part of this work was carried out when the first author was visiting Uppsala University. The math department and its facilities are kindly acknowledged.

\bibliographystyle{abbrv}


\bibliography{Bibliography} 

\end{document}